\theoremstyle{definition} %%% for statements in roman typeface
 \newtheorem{definition}{Definition}[section]
 \newtheorem{remark}[definition]{Remark}
\theoremstyle{plain}      %%% for statements in italic typeface
 \newtheorem{proposition}[definition]{Proposition}
 \newtheorem{theorem}[definition]{Theorem}
 \newtheorem{corollary}[definition]{Corollary}
 \newtheorem{lemma}[definition]{Lemma}
\newtheorem*{conjecture}{Conjecture}
\begin{document}

\title{Birkhoff's version of Hilbert's metric and its applications in analysis}

\author{Bas Lemmens\thanks{
Work partially supported by EPSRC grant EP/J008508/1} and Roger Nussbaum
\thanks{Work partially supported by NSF grant is NSF DMS 1201328} }

\address{
School of Mathematics, Statistics \& Actuarial Science\\ 
University of Kent,\\ Canterbury, Kent CT2 7NF, UK\\
email:\,\tt{B.Lemmens@kent.ac.uk}
\\[4pt]
Department of Mathematics, \\
Rutgers, The State University Of New Jersey, \\
110 Frelinghuysen Road, Piscataway, NJ 08854-8019, USA\\
email:\,\tt{nussbaum@math.rutgers.edu}
}

\maketitle

\begin{abstract} 
Birkhoff's version of Hilbert's metric is a distance between pairs of  rays in a closed cone, and  is closely related to Hilbert's classical cross-ratio metric. The version we discuss here was popularized by Bushell and can be traced back to the work of Garrett Birkhoff  and Hans Samelson. It has found numerous applications in mathematical analysis, especially in the analysis of linear, and nonlinear, mappings on cones. Some of these applications are discussed in this chapter. 

Birkhoff's version of Hilbert's metric provides a different perspective  on Hilbert geometries and naturally leads to infinite-dimensional generalizations.  
We illustrate this by showing some of its uses in the  geometric analysis of Hilbert geometries.
\end{abstract}

\begin{classification}
47H09, 54H20; 53C60
\end{classification}

\begin{keywords} 
Birkhoff's version of Hilbert's metric, Birkhoff's contraction coefficient,  Denjoy-Wolff type theorems, dynamics of non-expansive mappings, isometric embeddings, nonlinear mappings on cones. 
\end{keywords}

\tableofcontents   

\section{Introduction}\label{s-1}

In the nineteen-fifties Garrett Birkhoff  \cite{Bi} and Hans Samelson \cite{Sa} independently discovered that one can use Hilbert's metric and the contraction mapping principle  to give an elegant proof of the existence of a positive eigenvector for a variety of linear mappings that leave a cone in a real vector space invariant. Their results can be seen as a direct generalization of Perron's theorem \cite{Per1,Per2} concerning the existence and uniqueness of a positive eigenvector of square matrices with positive entries. 

In the past decades the ideas of Birkhoff and Samelson has been further developed by numerous authors. A partial list of contributors include, \cite{Bu1,Bu2,EN1, EN2,GG,KP,Kras1,KLS,Krau, KR,LNBook, Nmem1,Po,Tho,ZKP}. 
It has resulted in a remarkably detailed understanding of the eigenvalues, eigenvectors, and iterative behavior of a variety of linear, and nonlinear, mappings on cones. This body of work belongs to an area in mathematical analysis known  as nonlinear Perron-Frobenius theory, a recent introductory account of this field is given  in \cite{LNBook}.  

Recall that Hilbert's metric \cite{Hilbert} is defined as follows. 
Let $A$ be a real $n$-dimensional affine normed space, and denote the norm on the underlying vector space by $\|\cdot\|$. Consider a bounded, open, convex set 
$\Omega\subseteq A$. For $x,y\in \Omega$, let $\ell_{xy}$ denote the straight line through $x$ and $y$ in $A$, and denote the points of intersection of $\ell_{xy}$ and $\partial\Omega$ by $x'$ and $y'$, where $x$ is between $x'$ and $y$, and $y$ is between $x$ and $y'$, as in Figure \ref{fig:1}. 

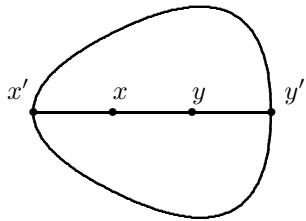
\begin{figure}[h]
\begin{center}
\begin{picture}(30, 70)  
\thicklines
  \closecurve(0,0, 60,30, 0,60)
  \put(-40,35){$x'$}
  \put(-30,30){\circle*{3}}
\put(0,35){$x$}
  \put(0,30){\circle*{3}}
\put(30,35){$y$}
  \put(30,30){\circle*{3}}
\put(65,35){$y'$}
  \put(60,30){\circle*{3}}
  \put(-30,30){\line(1,0){90}}
   \end{picture}
   \caption{Hilbert's cross-ratio metric}\label{fig:1}
  \end{center}  
\end{figure}
On $\Omega$, {\em Hilbert's metric}\index{Hilbert metric}  is defined by 
\begin{equation}\label{eq:1.1}
\delta(x,y) = \log \Big{(}\frac{\|x'-y\|}{\|x'-x\|} \frac{\|y'-x\|}{\|y'-y\|}\Big{)}
\end{equation}
for all $x\neq y$ in $\Omega$, and $\delta(x,x) =0$ for all $x\in \Omega$.
The metric space $(\Omega, \delta)$ is  called the  {\em Hilbert geometry}\index{Hilbert geometry}  on $\Omega$.

In mathematical analysis one uses an alternative version of Hilbert's metric, which is defined on a cone in a real vector space $V$. The version we will use here was popularized by Bushell  in \cite{Bu1}, and for simplicity we shall refer to  it as  {\em Birkhoff's version of Hilbert's metric}\index{Birkhoff's version of Hilbert's metric} . 
As we shall see,  Birkhoff's version ties together the partial ordering induced by $C$ and the metric. This idea has proved to be  very fruitful. 

The main objective of this survey is to discuss some of the applications of Birkhoff's version of Hilbert's metric in the analysis of nonlinear mappings on cones, and to  illustrate some of the advantages of Birkhoff's version by reproving several known geometric results for Hilbert geometries. Among other results we shall see  how Birkhoff's version can be used to give a simple proof of the well-known fact that the Hilbert geometry on an open $n$-simplex is isometric to and $n$-dimensional normed space, whose unit ball is a polytope with $n(n+1)$ facets. 

As many important applications of Hilbert's metric in analysis are in infinite-dimensional Banach spaces, we shall state some of the basic results in an infinite-dimensional setting and occasionally point out the difference between the infinite-dimensional and finite-dimensional cases. 

\section{Birkhoff's version of Hilbert's metric}
To define Birkhoff's version of Hilbert's metric we need to recall some elementary  concepts from the theory of partially ordered vector spaces. Let $V$ be a (possibly infinite-dimensional) real vector space. A subset $C$ of $V$ is called a {\em cone}\index{cone}  if 
\begin{enumerate}[(C1)] 
\item $C$ is convex, 
\item $\lambda C\subseteq C$ for all $\lambda \geq 0$, and
\item  $C\cap (-C) =\{0\}$.
\end{enumerate}
If $C\subseteq V$ satisfies (C1) and (C2) it is called a {\em wedge}. \index{wedge} A cone $C\subseteq V$ induces a partial ordering, $\leq_C$, on $V$ by 
\[
x\leq_C y \mbox{\quad if \quad }y-x\in C.
\]
If $C$ is merely a wedge, then $\leq_C$ is only a pre-order, as it may fail to be anti-symmetric. 

In analysis one often considers a closed cone $C$ in a Banach space $(V,\|\cdot\|)$. The cone $C$ is called {\em normal}\index{cone!normal}  if there exists a constant $\kappa>0$ such that $\|x\|\leq \kappa \|y\|$ whenever $0\leq_C x\leq_C y$. It is known, see \cite[Lemma 1.2.5]{LNBook}, that every closed cone in a finite-dimensional normed space is normal, but a closed cone in an infinite-dimensional Banach space may fail to be normal. Given a Banach space $(V,\|\cdot\|)$, we denote  the dual space of continuous linear functionals on $V$  by $V^*$.  If $C$ is a closed cone in a Banach space $(V,\|\cdot\|)$, the {\em dual wedge} \index{dual wedge} is given by, 
 \[
 C^*=\{\phi\in V^*\colon \phi(x)\geq 0\mbox{ for all }x\in V\}.
 \]
In general $C^*$ is only a wedge. However, if $C$ is a {\em total }\index{cone!total} cone, i.e., the closure of $C-C =\{x-y\colon x,y\in C\}$ is $V$, then $C^*$ is a cone. In particular, if $C$ is a closed cone in a finite-dimensional vector space $V$ and $C$ has nonempty interior, then $C^*$ is a closed cone in $V^*$ with nonempty interior. 
 
For $\phi \in C^*$ we let 
$\Sigma_\phi =\{x\in C\colon \phi(x) =1\}$.
It is not hard to show, see \cite[Lemma 1.2.4]{LNBook}, that if $C$ is a closed cone with nonempty interior in a finite-dimensional normed space, and $\phi$ in the interior of 
$C^*$, then $\Sigma_\phi$ is a nonempty, compact, convex subset of $V^*$.  However, in infinite dimensions it may happen that there does not exist $\phi\in C^*$ such that $\Sigma_\phi$ is bounded. A simple example is the Hilbert space, $\ell_2$, with the standard positive cone $C=\{(x_1,x_2,\ldots)\in\ell_2\colon x_i\geq 0\mbox{ for all }i\}$. 

As mentioned earlier, Birkhoff's version of Hilbert's metric ties together the partial ordering $\leq_C$ and the distance. To define it, let $C$  be a cone in a vector space $V$. For $x\in V$ and $y\in C$, we say that $y$ {\em dominates}\index{dominate}   $x$ if there exist $\alpha,\beta\in\mathbb{R}$ such that $\alpha y\leq_C x\leq_C \beta y$. In that case, we write  
\[
M(x/y) =\inf\{\beta\in\mathbb{R}\colon x\leq _C \beta y\}
\]
and
\[
m(x/y) = \sup\{\alpha\in\mathbb{R}\colon \alpha y\leq_C x\}.
\]
Obviously, if $x,y\in V$ are such that $y=0$ and $y$ dominates $x$,  then $x=0$, as $C$ is a cone. On the other hand, if $y\in C\setminus\{0\}$ and $y$ dominates $x$, then $M(x/y)\geq m(x/y)$. Using the domination relation one  obtains an equivalence relation on $C$ by $x\sim_C y$ if $y$ dominates $x$ and $x$ dominates $y$. The equivalence classes are called the {\em parts}\index{parts}   of $C$. Obviously $\{0\}$ is a part of $C$. Moreover, if $C$ is a closed cone with nonempty interior, $C^\circ$, in a Banach space, then $C^\circ$ is a part of $C$. The parts of a finite-dimensional cone are related to the faces of $C$. Indeed, if $C$ is a finite-dimensional closed cone, then it can be shown that the parts correspond to the relative interiors of the faces of $C$, see \cite[Lemma 1.2.2]{LNBook}. Recall that a {\em face}\index{face}  of a convex set $S\subseteq V$ is a subset  $F$ of $S$ with the property that if $x,y\in S$ and $\lambda x+ (1-\lambda)y\in F$ for some $0<\lambda <1$, then $x,y\in F$. The relative interior of a face $F$ is the interior of $F$ in its affine span.  

It is easy to verify that if $x,y\in C\setminus\{0\}$, then $x\sim_C y$ if, and only if, there exist $0<\alpha\leq \beta$ such that $\alpha y\leq_C x\leq_C\beta y$. Furthermore, 
\begin{equation}\label{eq:2.1}
m(x/y) = \sup\{\alpha>0\colon y\leq_C \alpha^{-1}x\}= M(y/x)^{-1}.
\end{equation}
Birkhoff's version of {\em Hilbert's metric}\index{Hilbert metric!Birkhoff's version}   on $C$ can now be defined as follows:
\begin{equation}\label{eq:2.2}
d(x,y) = \log\Big{(}\frac{M(x/y)}{m(x/y)}\Big{)}
\end{equation}
for all $x\sim_C y$ with $y\neq 0$, $d(0,0)=0$, and $d(x,y)=\infty$ otherwise. 
If $C$ is a closed cone in a Banach space, then $d$ is a genuine metric on the set of 
rays in each part of the cone as the following lemma shows. 
\begin{lemma}\label{lem:2.1.1} If $C$ is a cone in $V$, then for each $x\sim_C y\sim_C z$ with $y\neq 0$, 
\begin{enumerate}[(i)]
\item $d(x,y)\geq 0$,
\item $d(x,y)=d(y,x)$, 
\item $d(x,z)\leq d(x,y)+d(y,z)$,
\item $d(x,y) =d(\lambda x, \mu y)$ for all $\lambda,\mu>0$.
\end{enumerate}
Moreover, if $C$ is a closed cone in a Banach space $(V,\|\cdot\|)$, then $d(x,y) =0$ if, and only if, $x=\lambda y$ for some $\lambda\geq 0$. 
\end{lemma}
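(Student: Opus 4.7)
The plan is to derive (i)--(iv) directly from the inf/sup definitions of $M(x/y)$ and $m(x/y)$, and to invoke closedness of $C$ only for the final ``moreover'' claim. Since $x\sim_C y$ with $y\neq 0$, the remark preceding (2.1) supplies some $0<\alpha\leq\beta$ with $\alpha y\leq_C x\leq_C\beta y$, so both $m(x/y)$ and $M(x/y)$ lie in $(0,\infty)$ and $d(x,y)$ is a well-defined real number. Property (iv) then follows by direct rescaling: $\lambda x\leq_C\beta\mu y$ iff $x\leq_C(\mu\beta/\lambda)y$, so $M(\lambda x/\mu y)=(\lambda/\mu)M(x/y)$ and the same identity holds for $m$, leaving the quotient unchanged. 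Property (ii) is immediate from identity (2.1): applying it twice gives $m(y/x)=M(x/y)^{-1}$, hence $d(y,x)=d(x,y)$.

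For (i), it suffices to show $m(x/y)\leq M(x/y)$. Indeed, for any $\alpha$ with $\alpha y\leq_C x$ and $\beta$ with $x\leq_C\beta y$, adding yields $(\beta-\alpha)y=(\beta y-x)+(x-\alpha y)\in C$; if one had $\beta<\alpha$, then also $(\alpha-\beta)y\in C$, contradicting $C\cap(-C)=\{0\}$ because $y\neq 0$. Taking sup over $\alpha$ and inf over $\beta$ then gives $m(x/y)\leq M(x/y)$, so $d(x,y)\geq 0$. For (iii) I would use an $\varepsilon$-approximation: given $\varepsilon>0$, pick $\beta_1<M(x/y)+\varepsilon$ and $\beta_2<M(y/z)+\varepsilon$ with $x\leq_C\beta_1 y$ and $y\leq_C\beta_2 z$; multiplying the second by $\beta_1>0$ and chaining gives $x\leq_C\beta_1\beta_2 z$, so $M(x/z)\leq M(x/y)M(y/z)$ upon letting $\varepsilon\to 0$. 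A symmetric argument with $\alpha$'s gives $m(x/z)\geq m(x/y)m(y/z)$, and dividing the two bounds and taking logarithms produces the triangle inequality.

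For the ``moreover'', assume $C$ is a closed cone in the Banach space $V$. If $x=\lambda y$ with $\lambda\geq 0$, then $x\sim_C y$ and $y\neq 0$ force $\lambda>0$, and one reads off $M(x/y)=m(x/y)=\lambda$, so $d(x,y)=0$. Conversely, suppose $d(x,y)=0$ and set $\lambda=M(x/y)=m(x/y)>0$. Choose sequences $\beta_n\downarrow\lambda$ with $\beta_n y-x\in C$ and $\alpha_n\uparrow\lambda$ with $x-\alpha_n y\in C$; passing to the limit and using that $C$ is closed gives $\lambda y-x\in C$ and $x-\lambda y\in C$, so $\lambda y-x\in C\cap(-C)=\{0\}$ and hence $x=\lambda y$. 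This final limiting step is the only place where closedness is essential, and it is the main (and only substantive) obstacle: the algebraic properties (i)--(iv) hold for any cone, but separating $d(x,y)=0$ from $x=\lambda y$ requires upgrading the inf/sup attained approximately to an exact membership in $C$, which fails without closedness.
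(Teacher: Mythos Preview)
Your proof is correct and follows essentially the same approach as the paper: both establish (i)--(iv) directly from the definitions of $M(x/y)$ and $m(x/y)$ together with identity (2.1), proving the multiplicative bounds $M(x/z)\leq M(x/y)M(y/z)$ and $m(x/z)\geq m(x/y)m(y/z)$ for the triangle inequality and the rescaling identity for (iv). For the ``moreover'' part, the paper simply asserts that closedness gives $m(x/y)y\leq_C x\leq_C M(x/y)y$ and then uses antisymmetry of $\leq_C$, which is exactly what your sequential limit argument makes explicit.
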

\begin{proof}
To prove the first assertion we note that for each $0<\alpha<m(x/y)$ and $0<M(x/y)<\beta$ we have  
\[
\alpha y\leq_C x\leq_C \beta y.
\]
It follows that $y\leq_C (\beta/\alpha)y$, and hence $\beta/\alpha \geq 1$. Thus $M(x/y)/m(x/y)\geq 1$ and hence $d$ is nonnegative. Furthermore, note that by (\ref{eq:2.1}), 
\begin{equation}
d(x,y) = \log\Big{(}M(x/y)M(y/x)\Big{)} = d(y,x),
\end{equation}
which shows that $d$ is symmetric. To show that $d$ satisfies the triangle inequality, we note that for each $0<\alpha <m(x/y)$ and $0<\gamma<m(y/z)$ we have  $\alpha y\leq_C x$ and $\gamma z\leq_C y$, and hence $\alpha\gamma z\leq_C x$. This implies that $m(x/z)\geq m(x/y)m(y/z)$. In the same way it can be shown that $M(x/z)\leq M(x/y)M(y/z)$. Thus, 
\[
\frac{M(x/z)}{m(x/z)} \leq \frac{M(x/y)M(y/z)}{m(x/y)m(y/z)}, 
\] 
which proves (iii). For $\lambda,\mu>0$, it  is easy to verify that 
\[
M(\lambda x/\mu y) = \frac{\lambda}{\mu}M(x/y)\mbox{\quad and\quad } 
m(\lambda x/\mu y) = \frac{\lambda}{\mu}m(x/y),
\]
which gives the fourth assertion. 

Finally, assume that $C$ is a closed cone in $(V,\|\cdot\|)$ and $x\sim_C y$ with $y\neq 0$. As $C$ is closed, $m(x/y)y\leq_C x\leq_C M(x/y)y$. If $d(x,y)=0$, then 
$M(x/y)m(x/y)^{-1}=1$, so we get that $y\leq_C m(x/y)^{-1}x \leq_C y$ from which we deduce that $x=m(x/y)y$. On the other hand, if $x=\lambda y$ for some $\lambda>0$, then $d(x,y)=0$ by the previous assertion.  
\end{proof}

To understand the relation with Hilbert's cross-ratio metric, $\delta$, given in (\ref{eq:1.1}),  we consider a cone $C$ in an 
$(n+1)$-dimensional real normed space $(V,\|\cdot\|)$ and we assume that the interior of $C$ is nonempty. Let $H\subseteq V$ be an $n$-dimensional affine hyperplane such that $\Omega_C:= H\cap C^\circ$ is a (relatively) open, bounded, convex set in $H$. 
\begin{theorem}\label{thm:2.2} 
The restriction of $d$ to $\Omega_C$ coincides with $\delta$. 
\end{theorem}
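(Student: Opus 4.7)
The plan is to choose a linear functional $\phi$ cutting out $H$, translate the quantities $m(x/y)$ and $M(x/y)$ into concrete points on the line $\ell_{xy}$, and then verify that these points coincide with $x'$ and $y'$ appearing in the cross-ratio definition. First I would observe that, because $\Omega_C$ is nonempty and bounded, the hyperplane $H$ cannot pass through the origin and cannot contain a nonzero element of $C$; hence there exists $\phi \in V^*$ with $H = \{v \in V : \phi(v) = 1\}$ and $\phi(v) > 0$ for every $v \in C\setminus\{0\}$.

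Next, fix $x, y \in \Omega_C$ with $x \neq y$. Since $C^\circ$ is a single part of $C$, the numbers $m := m(x/y)$ and $M := M(x/y)$ are finite and positive, and closedness of $C$ gives $x - my \in C$ and $My - x \in C$. Neither of these vectors can vanish, for otherwise applying $\phi$ would force $m = 1$ or $M = 1$ and hence $x = y$, a contradiction. Evaluating $\phi$ then yields $0 < m < 1 < M$. The extremality built into the definitions of $m$ and $M$ forces $x - my$ and $My - x$ to lie in $\partial C$, so after rescaling to $H$ the points
\[
x' := \frac{x - my}{1 - m} \qquad \text{and} \qquad y' := \frac{My - x}{M - 1}
\]
belong to $\partial C \cap H = \partial\Omega_C$. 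A direct substitution places them on $\ell_{xy}$ in the required order $x', x, y, y'$.

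It then suffices to substitute into Hilbert's cross-ratio formula. Parametrizing $\ell_{xy}$ as $(1-t)x + ty$, the points $x'$ and $y'$ correspond to $t' = -m/(1-m) < 0$ and $t'' = M/(M-1) > 1$, so each of the four differences $x' - y$, $x' - x$, $y' - x$, $y' - y$ becomes an explicit scalar multiple of $x - y$. The factor $\|x - y\|$ cancels throughout, the factors $1-m$ and $M-1$ also cancel, and the cross-ratio reduces to $M/m$. Taking logarithms gives $\delta(x,y) = \log(M/m) = d(x,y)$.

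The main obstacle is really only the conceptual matching between the order-theoretic data $(m, M)$ and the geometric data $(x', y')$: once one recognizes that $x - my$ and $My - x$ are (up to positive scaling) the directions from the origin of $V$ toward $x'$ and $y'$, the remaining identification of the four line segments and the cancellation in the cross-ratio are elementary.
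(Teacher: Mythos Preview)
Your proof is correct and follows essentially the same approach as the paper: identify $x'$ and $y'$ as the normalizations $(x-my)/(1-m)$ and $(My-x)/(M-1)$ of the boundary vectors $x-my$ and $My-x$, then compute the cross-ratio directly. Your version is in fact a bit more careful than the paper's, since you explicitly justify $0<m<1<M$ and the nonvanishing of $x-my$ and $My-x$ before normalizing.
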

\begin{proof}
Consider $x\neq y$ in $\Omega_C$. Let $\alpha = m(x/y)= M(y/x)^{-1}$ and 
$\beta=M(x/y)$. 
As $C$ is closed, $\alpha y\leq_C x$ and $x\leq_C \beta y$. Write 
$u=x-\alpha y\in\partial C$ and $w=y-x/\beta \in\partial C$. 
Let $\ell_{xy}$ denote the straight line through $x$ and $y$ and denote the points of intersection with $\partial C$ by $x'$ and $y'$, as in Figure \ref{fig:2}.
\begin{figure}[h]
\begin{center}
\thicklines
\begin{picture}(150,100)(0,0)
\put(50,0){\line(-1,2){50}} 
\put(50,0){\line(1,1){100}}
\put(0,80){\line(1,0){150}} \put(30,80){\circle*{3.0}}
\put(30,85){$x$} \put(90,80){\circle*{3.0}}\put(90,85){$y$}
\put(50,0){\line(1,2){40}} 
\put(30,80){\line(-1,-2){10}}
%\dottedline{4}(30,80)(50,0)
\put(30,80){\line(1,-4){20}}
%\dottedline{4}(90,80)(98,48)

\put(90,80){\line(1,-4){8}}

\put(98,48){\circle*{3.0}}\put(103,42){$w$}
\put(20,60){\circle*{3.0}}\put(10,55){$u$}\put(10,80){\circle*{3.0}}\put(12,85){$x'$}
\put(130,80){\circle*{3.0}}\put(123,85){$y'$} 
%\put(49,10){$0$}
%\put(50,0){\circle*{3.0}}
\end{picture}
\caption{Theorem \ref{thm:2.2} \label{fig:2}}
\end{center}
\end{figure}
We know that there exist $\sigma,\tau>1$ such that 
\[
x'= y+\sigma(x-y)\mbox{\quad and\quad }y'=x+\tau(y-x).
\]

Let $\phi$ be a linear functional on $V$ such that $H =\{z\in V\colon \phi(z)=1\}$.  So, 
\[
y+\sigma(x-y) = x' =\frac{u}{\phi(u)}= \frac{x-\alpha y}{1-\alpha}, 
\]
which implies that $\alpha=(\sigma -1)/\sigma$. 
Similarly, 
\[
x+\tau(y-x) = y' =\frac{w}{\phi(w)}= \frac{y-x/\beta}{1-1/\beta}, 
\]
which gives $\beta =\tau/(1-\tau)$. 
Thus, 
\begin{equation}\label{eq:2.3} 
\frac{\|y-x'\|}{\|x-x'\|} = \frac{\sigma}{1-\sigma} =\frac{1}{\alpha} = M(y/x)
\end{equation}
and 
\begin{equation}\label{eq:2.4} 
\frac{\|x-y'\|}{\|y-y'\|} = \frac{\tau}{1-\tau} =\beta= M(x/y), 
\end{equation}
which shows that $d(x,y) =\delta(x,y)$ on $\Omega_C$. 

\end{proof}

\begin{remark}
From  equation (\ref{eq:2.3}) it follows that 
\[
\log \frac{\|x-y'\|}{\|y-y'\|} = \log M(x/y)
\]
for all $x\neq y$ in $\Omega_C$. The function $F_C\colon C^\circ\times C^\circ\to\mathbb{R}$ given by $F_C(x,y) =\log M(x/y)$ is called the {\em Funk (weak) metric}\index{Funk metric}   on $C^\circ$, see \cite{PT1,Wa2}. 
\end{remark} 

\begin{remark}
Birkhoff's version allows one to give a natural definition of Hilbert's metric on any open, convex, possibly unbounded, subset of an infinite-dimensional Banach space. Indeed, let $\Omega\subseteq (V,\|\cdot\|)$ be an open, convex set and assume that $\Omega$ contains no straight lines, i.e., for each $v\in\Omega$ there exists no $u\in V\setminus\{0\}$ such that $v+tu\in \Omega$ for all $t\in\mathbb{R}$. Let $W=\mathbb{R}\times V$ with norm $\|(s,v)\|_W= |s|+\|v\|$ for all $(s,v)\in W$.

Consider the following set in $W$: 
\[
C_\Omega =\{(s,sv)\in W\colon s>0\mbox{ and }v\in\Omega\}.
\]
It is easy to verify that $C_\Omega$ is open, and $\lambda C_\Omega\subseteq  C_\Omega$ for all $\lambda>0$.   Furthermore its closure, $\overline{C_\Omega}$, is a cone in $W$. Indeed, suppose that $w=(s,u)\in \overline{C_\Omega}$ and $-w\in \overline{C_\Omega}$. As $s\geq 0$, we know that $s=0$. Now let $w_k=(s_k,s_kv_k)\in C_\Omega$ be such that $w_k\to w$ and $v_k\in\Omega$ for all $k$.  Then for each $t>0$ fixed and $k$ sufficiently large, $0\leq s_kt\leq 1$. Thus, for each $v\in\Omega$ and $k$ large, we have 
\[
(1-s_kt)v +s_kt v_k\in\Omega.
\]
Note that $\|(1-s_kt)v +s_kt v_k -(v+tu)\|\leq s_kt\|v\| +t\|s_kv_k-u\|\to 0$  as $k\to\infty$,
so that $v+tu\in \overline{\Omega}$. But for $\epsilon >0$ small we have  $v+\epsilon u\in\Omega$, as $\Omega$ is open. As $t$ was arbitrary and $\Omega$ is convex, we deduce from \cite[Proposition 8.5]{Simon} that $v+tu\in\Omega$ for all $t\geq 0$. In the same way we can use the assumption that $-w\in \overline{C_\Omega}$ to show that $v+tu\in\Omega$ for all $t\leq 0$. This, however, contradicts the hypothesis  that $\Omega$ contains no lines. 

The restriction of $d$ to $\{(1,v)\in W\colon v\in\Omega\}$ is a genuine metric, which provides a natural definition of Hilbert's metric on $\Omega$.        
\end{remark}

\begin{remark}
From an analyst's point of view Hilbert's metric on cones has one disadvantage -- namely, it is only a metric on the pairs of rays in the cone rather than on the pairs of points in the  cone. In applications in analysis one therefore often also considers the following variant, which was introduced by Thompson \cite{Tho}. Given a closed cone $C$ in $(V,\|\cdot\|)$, {\em Thompson's metric}  is defined by 
\[
d_T(x,y) = \max\Big{\{}\log M(x/y), \log M(y/x)\Big{\}}
\]
for $x\sim_C y$ and $y\neq 0$, $d_T(0,0) =0$, and $d_T(x,y)=\infty$ otherwise. 
The reader can verify that $d_T$ is a genuine metric on each part of $C$. 

It is known, see \cite{Tho}, that if  $C$ is a closed normal cone in a Banach space 
$(V,\|\cdot\|)$ and $P\subset C$ is a part of $C$, then  
$(P,d_T)$ is a complete metric space whose topology coincides with the norm topology of the underlying space. Furthermore, if $q\colon C\setminus\{0\}\to (0,\infty)$ is a continuous homogeneous (degree $1$) function and $\Sigma_q=\{x\in C\colon q(x) =1\}$, then the metric space $(\Sigma_q\cap P,d)$ is also a complete metric space and its topology coincides with the norm topology. Particularly, interesting examples of homogeneous functions $q$ include $q \in C^*\setminus\{0\}$ and $q\colon x\mapsto \|x\|$. 
\end{remark} 

\subsection{Birkhoff's contraction ratio} 
The usefulness of Hilbert's metric lies in the fact that linear, but also certain nonlinear, mappings between cones are non-expansive with respect to this metric. 
Recall that a mapping $f$ from a metric space $(X,d_X)$ into a metric space $(Y,d_Y)$  is {\em non-expansive} if  
\[
d_Y(f(x),f(y))\leq d_X(x,y)\mbox{\quad for all }x,y\in X.
\]
It is said to be a {\em Lipschitz contraction} with constant $0\leq c<1$, if 
\[
d_Y(f(x),f(y))\leq cd_X(x,y)\mbox{\quad for all }x,y\in X.
\]

Note that if $C$ is a cone in $V$, $K$ is a cone in $W$, and $L\colon V\to W$ is a linear mapping, then $L(C)\subseteq K$ is equivalent to $Lx\leq_K Ly$ whenever $x\leq_C y$. A mapping $f\colon C\to K$ is called {\em order-preserving} if $x\leq_C y$ implies $f(x)\leq_K f(y)$. It is said to be {\em homogeneous of degree $r$} if $f(\lambda x) =\lambda^r f(x)$ for all $x\in C$ and $\lambda>0$. The following result is elementary, but  very useful. 

\begin{proposition}\label{prop:3.1} 
Let $C\subseteq V$ and $K\subseteq W$ be cones. If $f\colon C\to W$ is order-preserving  and homogeneous of degree $r>0$, then 
\begin{equation}\label{eq:3.1} 
M(f(x)/f(y))\leq M(x/y)^r\mbox{\quad and \quad} m(x/y)^r\leq m(f(x)/f(y))
\end{equation}
for all $x,y\in C$ with $x\sim_C y$.
\end{proposition}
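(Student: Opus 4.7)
The plan is to work directly from the definitions of $M$ and $m$, using only order-preservation and homogeneity. Both inequalities are trivial when $y=0$ (since then $x=0$ by the remark that $0$ dominates only $0$, and $f(0)=0$ by homogeneity), so I would dispose of that case first and assume $y\neq 0$, which (together with $x\sim_C y$) ensures $0<m(x/y)\leq M(x/y)<\infty$.

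For the upper bound on $M(f(x)/f(y))$, I would pick an arbitrary $\beta>M(x/y)$. By definition $x\leq_C \beta y$. Order-preservation of $f$ gives $f(x)\leq_K f(\beta y)$, and homogeneity of degree $r>0$ gives $f(\beta y)=\beta^r f(y)$. Hence $f(x)\leq_K \beta^r f(y)$, so $M(f(x)/f(y))\leq \beta^r$. Letting $\beta\downarrow M(x/y)$ yields $M(f(x)/f(y))\leq M(x/y)^r$.

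The lower bound on $m(f(x)/f(y))$ is entirely symmetric: pick $0<\alpha<m(x/y)$, so that $\alpha y\leq_C x$; then order-preservation and homogeneity give $\alpha^r f(y)=f(\alpha y)\leq_K f(x)$, whence $m(f(x)/f(y))\geq \alpha^r$, and letting $\alpha\uparrow m(x/y)$ gives $m(f(x)/f(y))\geq m(x/y)^r$.

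There is no real obstacle here, beyond being careful about the edge cases where the quantities are $0$ or $\infty$. The only subtlety worth double-checking is the monotone passage to the limit in $\beta^r$ and $\alpha^r$, which is immediate since $t\mapsto t^r$ is continuous and monotone on $(0,\infty)$ for $r>0$. Note that one does not need strict inequality with $\beta=M(x/y)$ itself, so the cone $C$ need not be closed for the argument; closedness only enters if one wants to replace infimum/supremum by attained values.
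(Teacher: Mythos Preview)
Your proof is correct and follows essentially the same route as the paper's: pick $\beta>M(x/y)$, use order-preservation and degree-$r$ homogeneity to get $f(x)\leq_K \beta^r f(y)$, and pass to the infimum; the paper omits the symmetric argument for $m$ and the edge case $y=0$, which you handle with a little extra care.
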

\begin{proof}
As the proofs of the two inequalities are very similar, we shall only show the first one. 
If $x,y\in C$ and $x\sim_C y$, then for each $\beta>0$ with $x\leq_C\beta y$ we have that $f(x)\leq_K \beta^r f(y)$. So, $M(f(x)/f(y))\leq \beta^r$ for all $\beta>M(x/y)$, which gives $M(f(x)/f(y))\leq M(x/y)^r$. 
\end{proof}
As an immediate consequence we obtain the following  result. 
\begin{corollary}\label{cor:3.2} 
Let $C\subseteq V$ and $K\subseteq W$ be cones. If $f\colon C\to K$ is order-preserving and homogeneous (of degree $1$), then $f$ is non-expansive with respect to $d$ on each part of $C$.
\end{corollary}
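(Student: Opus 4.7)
The plan is to deduce this corollary as an immediate quotient of the two inequalities in Proposition \ref{prop:3.1} with $r=1$. Fix a part $P$ of $C$ and points $x,y\in P$. First I would handle the degenerate case $y=0$: homogeneity of degree $1$ forces $f(0)=0$, and since $x\sim_C 0$ implies $x=0$ (the only element dominated by $0$), the statement $d(f(x),f(y))\leq d(x,y)$ reduces to $0\leq 0$.

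Assuming now $y\neq 0$, I first check that $f$ maps $P$ into a single part of $K$, since otherwise $d(f(x),f(y))$ could be $+\infty$. Because $x\sim_C y$ with $y\neq 0$, there exist $0<\alpha\leq\beta$ with $\alpha y\leq_C x\leq_C\beta y$; order-preservation and degree-$1$ homogeneity give $\alpha f(y)\leq_K f(x)\leq_K \beta f(y)$, so $f(x)\sim_K f(y)$. In particular, if $f(y)=0$ the inequality $\alpha f(y)\leq_K f(x)$ together with $0<\alpha$ forces $f(x)=0$ as well, so $d(f(x),f(y))=0\leq d(x,y)$; otherwise $f(y)\neq 0$ and Birkhoff's metric is defined on the pair.

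With $x\sim_C y$, $y\neq 0$, $f(y)\neq 0$ in hand, I apply Proposition \ref{prop:3.1} with $r=1$ to obtain $M(f(x)/f(y))\leq M(x/y)$ and $m(x/y)\leq m(f(x)/f(y))$. Dividing the first inequality by the second (both quantities being positive, since $x\sim_C y$ and $f(x)\sim_K f(y)$) yields
\[
\frac{M(f(x)/f(y))}{m(f(x)/f(y))}\leq\frac{M(x/y)}{m(x/y)},
\]
and taking logarithms gives $d(f(x),f(y))\leq d(x,y)$, the non-expansiveness on $P$.

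There is no real obstacle here; the only subtlety worth flagging is bookkeeping about whether $f(y)$ can vanish, which in view of the closedness assumption used in Lemma \ref{lem:2.1.1} and the definition of $m,M$ needs the short argument above. Everything else is a direct consequence of Proposition \ref{prop:3.1}.
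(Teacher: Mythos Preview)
Your proof is correct and follows exactly the intended route: the paper states Corollary~\ref{cor:3.2} as an immediate consequence of Proposition~\ref{prop:3.1}, and you have simply written out the details of dividing the two inequalities with $r=1$ and taking logarithms.

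One tiny slip in the bookkeeping: when $f(y)=0$, the inequality $\alpha f(y)\leq_K f(x)$ only says $0\leq_K f(x)$, which does not force $f(x)=0$. The correct deduction uses the \emph{other} inequality, $f(x)\leq_K \beta f(y)=0$, together with $f(x)\in K$, to conclude $f(x)=0$. The conclusion and the rest of the argument are unaffected.
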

\begin{remark}
It is interesting to note that every order-preserving homogeneous mapping  $f$ is also 
non-expansive with respect to  Thompson's metric. In fact, it can be shown 
(see \cite[Lemma 2.1.7]{LNBook}) that an order-preserving mapping $f\colon C\to K$  is non-expansive under Thompson's metric if, and only if, $f$ is {\em subhomogeneous}\index{subhomogeneous map}, i.e., $\lambda f(x)\leq_K f(\lambda x)$ for all $x\in C$ and $0\leq \lambda\leq 1$. 
We should also remark that it is also easy to show that  every order-reversing,   homogeneous degree $-1$ mapping is non-expansive under $d$, see \cite[Corollary 2.1.5]{LNBook}.
 \end{remark}
From Corollary \ref{cor:3.2} we  see that every linear mapping $L\colon V\to W$ with $L(C)\subseteq K$ is non-expansive with respect to $d$ on each part of $C$. Furthermore, if $L$ is invertible and $L(C)=K$, then $L$ must be an isometry. In fact, in that case we have 
\[
M(Lx/Ly)= M(x/y)\mbox{\quad and\quad } m(Lx/Ly)=m(x/y)
\]
for all $x\sim _C y$ in $C$.  
The non-expansiveness of linear mappings, $L\colon C\to C$, on cones provides a way to analyze the eigenvalue problem, $Lx=\lambda x$, on $C$ using contraction mapping arguments. 

Given cones $C\subseteq V$, $K\subseteq W$ and a linear mapping $L\colon V\to W$ with $L(C)\subseteq K$, the {\em Birkhoff contraction ratio} of $L$ is defined by 
\[
\kappa(L) =\inf\{c\geq 0\colon d(Lx,Ly)\leq cd(x,y)\mbox{ for all }x\sim_C y\mbox{ in }C\}.
\]
Moreover, the {\em projective diameter} \index{projective diameter}  of $L$ is given by 
\[
\Delta(L)=\sup\{d(Lx,Ly)\colon x,y\in C\mbox{ with }Lx\sim_K Ly\}.
\]
\begin{theorem}[Birkhoff]\label{thm:3.4} \index{Birkhoff's theorem}  Let $C$ be a cone in a vector space $V$ and $K$ be a cone in a vector space $W$. If $L\colon V\to W$ is a linear mapping with $L(C)\subseteq K$, then 
\[
\kappa(L) = \tanh\Big{(}\frac{1}{4}\Delta(L)\Big{)},
\]
where $\tanh(\infty) =1$.
\end{theorem}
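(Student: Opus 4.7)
The proof splits into the upper bound $\kappa(L)\le \tanh(\Delta(L)/4)$ and the matching lower bound. For the upper bound, the plan is to reduce to a one-variable cross-ratio calculation inside the two-dimensional subspace of $W$ spanned by $Lx$ and $Ly$. Fix $x\sim_C y$ with $Lx\sim_K Ly$, set $\alpha=m(x/y)$, $\beta=M(x/y)$, and $r=d(x,y)=\log(\beta/\alpha)$. (If $C$ fails to be closed one works with $\alpha-\varepsilon,\beta+\varepsilon$ and lets $\varepsilon\downarrow 0$ at the end.) Then $u:=x-\alpha y$ and $v:=\beta y-x$ lie in $\overline{C}$, and their images $Lu=Lx-\alpha Ly$, $Lv=\beta Ly-Lx$ lie in $\overline{K}$ with $d(Lu,Lv)\le \Delta(L)$. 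Letting $a=m(Lu/Lv)$ and $b=M(Lu/Lv)$, the inclusions $aLv\leq_K Lu\leq_K bLv$ together with the identities $Lx=\alpha Ly+Lu=\beta Ly-Lv$ give
\[
\frac{\alpha+a\beta}{1+a}\,Ly\leq_K Lx\leq_K \frac{\alpha+b\beta}{1+b}\,Ly,
\]
and hence, writing $s=\log(b/a)=d(Lu,Lv)\le\Delta(L)$,
\[
e^{d(Lx,Ly)}\le\frac{(\alpha+b\beta)(1+a)}{(\alpha+a\beta)(1+b)}=\frac{(1+ae^{r+s})(1+a)}{(1+ae^r)(1+ae^s)}.
\]

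The analytic heart is to maximize this expression in $a>0$ for fixed $r$ and $s\le\Delta(L)$. A symmetry argument (using $f'(t)+f'(-t)=1$ for $f(t)=\log(1+e^t)$) identifies the critical point as $a_0=e^{-(r+s)/2}$, and the identity $(1+e^x)(1+e^{-x})=4\cosh^2(x/2)$ reduces the maximum to $\cosh^2((r+s)/4)/\cosh^2((r-s)/4)$. Setting
\[
g(r)=2\log\cosh\!\tfrac{r+s}{4}-2\log\cosh\!\tfrac{r-s}{4},
\]
one verifies $g(0)=0$, $g'(0)=\tanh(s/4)$, and $g''(r)\le 0$ on $r\ge 0$ (since $\operatorname{sech}^2$ is decreasing on $[0,\infty)$ and $(r+s)/4\ge |r-s|/4$). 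Concavity then yields $g(r)\le r\tanh(s/4)\le r\tanh(\Delta(L)/4)$, proving $\kappa(L)\le\tanh(\Delta(L)/4)$. This is where I expect the main obstacle: the passage from the algebraic bound to the hyperbolic inequality, in which the somewhat mysterious $\tanh$ emerges from what looks at first like a routine cross-ratio calculation.

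For the matching lower bound, the strategy is to reverse-engineer the above. Choose sequences $p_n,q_n\in C$ with $Lp_n\sim_K Lq_n$, $d(Lp_n,Lq_n)\to\Delta(L)$, scaled so that $m(Lp_n/Lq_n)M(Lp_n/Lq_n)=1$ (write $M_n=M(Lp_n/Lq_n)$), chosen moreover so that the two-dimensional slice of $K$ containing $Lp_n,Lq_n$ is as nearly simplicial as possible. Take $y_n=p_n+q_n$ and $x_n^{(\lambda)}=(\lambda+1)p_n+\lambda q_n$ for large $\lambda>0$, so that $u_n=p_n$, $v_n=q_n$ and the inequalities of the upper bound become (asymptotic) equalities. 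A direct computation then gives $d(x_n^{(\lambda)},y_n)=\log(1+1/\lambda)\to 0$ while $d(Lx_n^{(\lambda)},Ly_n)/d(x_n^{(\lambda)},y_n)\to(M_n-1)/(M_n+1)=\tanh(d(Lp_n,Lq_n)/4)$ as $\lambda\to\infty$, which in turn converges to $\tanh(\Delta(L)/4)$ as $n\to\infty$. This yields $\kappa(L)\ge\tanh(\Delta(L)/4)$ and completes the proof.
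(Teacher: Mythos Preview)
The paper does not actually prove Theorem~\ref{thm:3.4}: it is stated as Birkhoff's theorem, and the reader is referred to \cite[Appendix~A]{LNBook} and to \cite{Bau,Bu1,Bu2,EN1,EN2,Ho1,Ho2} for proofs. There is therefore no ``paper's own proof'' to compare your argument against.

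That said, your argument is correct and follows the standard route (essentially Bushell \cite{Bu1} and Eveson--Nussbaum \cite{EN1}): pass to the two-dimensional situation generated by $u=x-\alpha y$ and $v=\beta y-x$, bound the resulting cross-ratio, and optimise. Two small remarks. First, the $\varepsilon$-perturbation is needed not only for closure of $C$ but to ensure $u_\varepsilon\sim_C v_\varepsilon$, so that $Lu_\varepsilon\sim_K Lv_\varepsilon$ and the inequality $d(Lu_\varepsilon,Lv_\varepsilon)\le\Delta(L)$ is legitimate; this holds because $\varepsilon y\le_C u_\varepsilon\le_C(\beta-\alpha+\varepsilon)y$ and similarly for $v_\varepsilon$, so both lie in the part of $y$. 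Second, in the lower bound the clause about choosing the slice ``as nearly simplicial as possible'' is unnecessary and can be dropped: you only need the inequality $d(x_n^{(\lambda)},y_n)\le\log(1+1/\lambda)$, which follows from $\lambda y_n\le_C x_n^{(\lambda)}\le_C(\lambda+1)y_n$ with no extremality hypothesis on $p_n,q_n$, together with the exact formula you derive for $d(Lx_n^{(\lambda)},Ly_n)$ in terms of $M_n$. The limit $\lambda\to\infty$ then yields $(M_n-1)/(M_n+1)=\tanh\bigl(d(Lp_n,Lq_n)/4\bigr)$ as you state.
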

So, if $\Delta(L)<\infty$, then $L$ is a Lipschitz contraction on each part of $C$, with contraction constant $\tanh(\Delta(L)/4)<1$. In numerous cases one can prove that $\Delta(L)<\infty$. Indeed, Birkhoff showed this for matrices with positive entries and  for certain integral operators with positive kernels. IN case the linear map $L$ is given by a positive $m\times n$ matrices $A=(a_{ij})$, so $a_{ij}>0$ for all $i$ and $j$, there exists the following well-known  
explicit formula, see \cite[Appendix A]{LNBook}, 
\[
\Delta(A) = \max_{i,j}d(Ae_i, Ae_j) =\log\Big{(}\max_{i,j,p,q} \frac{a_{pi}a_{qj}}{a_{pj}a_{qi}}\Big{)}<\infty,
\]
where $e_1,\ldots, e_n$ denote the standard basis vectors in $\mathbb{R}^n$. 

\begin{remark}
In \cite{Ho1,Ho2} Hopf proved a result  closely  related to Birkhoff's Theorem \ref{thm:3.4}. Hopf was apparently unaware of Birkhoff's work and did not use Hilbert's metric. The results of Birkhoff and Hopf have been extended and their connections have been unraveled by numerous authors including, \cite{Bau, Bu1,Bu2, EN1,EN2}. A detailed exposition of these works can be found in \cite[Appendix A]{LNBook}.     
\end{remark} 

Birkhoff's Theorem \ref{thm:3.4} can be used to prove the existence and uniqueness 
of positive eigenvectors of continuous linear mappings that leave a closed, normal  cone in  a Banach space invariant. In fact, one has the following slightly more general set up. 
A mapping $L$ from a cone $C\subseteq V$ into a cone $K\subseteq W$ is said to be 
{\em cone linear}\index{ cone linear},   if $L(\alpha x+\beta y) = \alpha Lx+\beta Ly$ for all $\alpha,\beta\geq 0$ and $x,y\in C$. There exist examples, see \cite{Bon},  of cone linear mappings $L$ from a closed cone $C$ in a Banach space  $V$ into a closed cone $K$ in a Banach space $W$ such that $L$ is continuous on $C$, but its linear extension to $\mathrm{cl}(C-C)$ is not continuous, even if $\mathrm{cl}(C-C) =V$. For cone linear mappings there exists the following result, see \cite[Theorem A.7.1]{LNBook}.
\begin{theorem}\label{thm:3.5}
Let $C$ be a closed normal cone in a Banach space $V$, and let $L\colon C\to C$ be a cone linear mapping, which is continuous at $0$. If there exists an integer $p\geq 1$ such that $\Delta(L^p)<\infty$ and $L^{p+1}(C)\neq \{0\}$, then $L$ has a unique eigenvector $v\in C$, with $\|v\|=1$, such that $Lv= r_C(L) v$, where 
\[
r_C(f) =\lim_{k\to\infty}\|L^k\|_C^{1/k}
\] 
is the cone spectral radius of $L$ and \[
\|L^k\|_C =\sup\{\|L^kx\|\colon x\in C\mbox{ with }\|x\| =1\}.\] 
Moreover, if we let $c=\tanh(\Delta(L^p)/4)<1$, then 
\[
d(L^{kp}x,v)\leq c^kd(x,v)\mbox{\quad  for all }x\in C\setminus\{0\}.
\]
\end{theorem}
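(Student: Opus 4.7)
The main engine is Theorem~\ref{thm:3.4}: since $\Delta(L^p)<\infty$, the map $L^p$ is a Lipschitz contraction in the Hilbert metric $d$ with constant $c=\tanh(\Delta(L^p)/4)<1$ on each part of $C$. My strategy has three stages: (i) produce a fixed ray of $L^p$ via Banach's fixed point theorem; (ii) build an eigenvector of $L$ itself by a cyclic-averaging construction; (iii) identify the eigenvalue with $r_C(L)$ and establish uniqueness.

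For stage (i), the hypothesis $L^{p+1}(C)\neq\{0\}$ furnishes a $y_0\in C$ with $u_0=L^p y_0\neq 0$ and $Lu_0\neq 0$. Because $L$ is cone-linear and order-preserving, Proposition~\ref{prop:3.1} shows that $L$ sends each part of $C$ into a single part (or to $\{0\}$); combining this with the finite-projective-diameter bound $\Delta(L^p)<\infty$, one extracts a single part $P\subseteq C$ with $L^p(P)\subseteq P$, essentially as in the proof of \cite[Theorem~A.7.1]{LNBook}. Since $C$ is closed and normal, pick a continuous homogeneous degree-one $q\colon C\setminus\{0\}\to(0,\infty)$ positive on $P$ (for instance $q(x)=\|x\|$, or any suitable $\phi\in C^*$). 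By the final Remark of Section~2, $(\Sigma_q\cap P,d)$ is a complete metric space whose topology agrees with the norm topology. The projective normalization $T(x)=L^p x/q(L^p x)$ maps $\Sigma_q\cap P$ into itself, and by Theorem~\ref{thm:3.4} together with Lemma~\ref{lem:2.1.1}(iv) it is a $c$-contraction on $(\Sigma_q\cap P,d)$. Banach's fixed point theorem then yields a unique $\tilde v\in\Sigma_q\cap P$ with $L^p\tilde v=\lambda\tilde v$ for $\lambda=q(L^p\tilde v)>0$, along with the iteration estimate $d(L^{kp}x,\tilde v)\leq c^k d(x,\tilde v)$ for every $x\in P$.

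For stage (ii), set $\mu=\lambda^{1/p}$ and
\[
w=\sum_{k=0}^{p-1}\mu^{-k}L^k\tilde v.
\]
A direct index shift using $L^p\tilde v=\mu^p\tilde v$ gives $Lw=\mu w$; since $\tilde v$ appears as the $k=0$ summand and every other summand lies in $C$, we have $w\in C\setminus\{0\}$, and $v:=w/\|w\|$ satisfies $Lv=\mu v$ with $\|v\|=1$. For stage (iii), the lower bound $r_C(L)\geq\mu$ is immediate from $\|L^n v\|=\mu^n$. The reverse inequality combines the contraction $T^k x\to\tilde v$ with the normality of $C$ to upgrade projective convergence into norm asymptotics for $\|L^{kp}\|_C^{1/(kp)}$, giving $r_C(L)\leq\mu$. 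Uniqueness follows from the uniqueness clause in Banach's theorem: any unit $v'\in C$ with $Lv'=r_C(L)v'=\mu v'$ satisfies $L^p v'=\mu^p v'$, so after normalization by $q$ it is a second fixed point of $T$, forcing $v'=v$. Finally, the estimate $d(L^{kp}x,v)\leq c^k d(x,v)$ for $x\in C\setminus\{0\}$ follows from $L^p$ being a $c$-contraction on each part together with $L^{kp}v=\mu^{kp}v$ and Lemma~\ref{lem:2.1.1}(iv); when $x\not\sim_C v$ both sides are infinite and the bound is vacuous.

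I expect the main obstacle to be the part-identification in stage (i), where one must combine Proposition~\ref{prop:3.1} with $\Delta(L^p)<\infty$ and the nondegeneracy $L^{p+1}(C)\neq\{0\}$ to pin down a single part $P$ on which $T$ is both well-defined and contractive. The reverse spectral bound $r_C(L)\leq\mu$ is the next most delicate point, relying essentially on the normality of $C$; the remaining algebraic steps follow routinely from the tools already developed in the excerpt.
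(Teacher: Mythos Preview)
The paper does not supply its own proof of Theorem~\ref{thm:3.5}; it merely states the result and refers the reader to \cite[Theorem~A.7.1]{LNBook}. Your sketch is therefore not competing with any argument in the text, and it follows what is essentially the standard route taken in that reference: use Birkhoff's Theorem~\ref{thm:3.4} to make a projective normalization of $L^p$ into a strict contraction on a complete slice $(\Sigma_q\cap P,d)$, apply Banach's fixed point theorem, and then upgrade the $L^p$-eigenvector to an $L$-eigenvector. The cyclic-averaging construction in your stage~(ii) is correct as written, and the final contraction estimate follows from Lemma~\ref{lem:2.1.1}(iv) exactly as you indicate.

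The two points you single out as delicate are indeed the only nontrivial ones. For the part-identification, the clean observation is that once you have a single $u\in C$ with $L^p u\neq 0$ and $L^{p+1}u\neq 0$, the finiteness of $\Delta(L^p)$ forces $L^p u$ and $L^p(Lu)=L(L^pu)$ to lie in the same part $P$, and then any $x$ with $x\sim_C u$ (in particular $x\in P$) satisfies $L^p x\sim_C L^p u\in P$, so $L^p$ maps $P$ into itself; this is how \cite{LNBook} handles it. For the reverse bound $r_C(L)\leq\mu$, normality of $C$ is exactly what converts the projective convergence $T^k x\to\tilde v$ into a uniform norm estimate $\|L^{kp}x\|\leq K\mu^{kp}\|x\|$ on $C$, as you anticipate. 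With these two points filled in, your outline matches the cited proof.
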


\subsection{An application to Perron-Frobenius operators}

In this section we briefly discuss an application of Birkhoff's  contraction ratio, or, more precisely, Theorem \ref{thm:3.5}, to  so-called Perron-Frobenius operators. These operators play a central role in the study of Hausdorff dimensions of invariant sets given by graph directed iterated function systems, see \cite{Nu3, NPVL}.  For simplicity we restrict attention here to operators arising from iterated function systems rather than the more general case of graph directed iterated function systems.  However, the same ideas can be applied in the graph directed case.

Let $(S,\rho)$ be a bounded, complete metric space with positive diameter. Let $C_b(S)$ denote the Banach space of bounded, continuous, real-valued functions, $f\colon S\to \mathbb{R}$, with $\|f\|=\sup_{s\in S} |f(s)|$. Write $\Delta =\mathrm{diam} (S)$, so $0<\Delta<\infty$. For $0<\lambda\leq 1$ and $M>0$ define
\[
K(M,\lambda) =\{f \in C_b(S)\colon f(s)\leq f(t) e^{M\rho(s,t)^\lambda}\mbox{ for all }s,t\in S\}.
\]
It can be shown, see \cite[Lemma 3.2]{NPVL}, that $K(M,\lambda)$ is a closed normal cone in $C_b(S)$. By using the assumption that $\Delta>0$, it is easy to verify that if $f\in K(M,\lambda)$, then $f(s)\geq 0$ for all $s\in S$, and $f(s) =0$ for all $s\in S$ whenever $f(t)=0$ for some $t\in S$. Furthermore for $f\in K(M,\lambda)$ we have 
\begin{equation}\label{eq:2.2.1}
\sup_{s\in S} f(s)\leq e^{M\Delta^\lambda}\Big{(} \inf_{s\in S} f(s)\Big{)}.
\end{equation}

Now fix $M_0>0$ and $0<\lambda\leq 1$ and let $b_i\in K(M_0,\lambda)$ for $i\in\mathbb{N}$. Assume that $\theta_i\colon S\to S$ are Lipschitz mappings for $i\in\mathbb{N}$, with 
\begin{equation}\label{eq:2.2.2} 
\mathrm{Lip}(\theta_i) = \sup\Big{\{}\frac{\rho(\theta_i(s),\theta_i(t))}{\rho(s,t)}\colon s\neq t\mbox{ in } S\Big{\}} \leq c<1. 
\end{equation}
Assume, in addition, that there exists $s^*\in S$ such that 
\begin{equation}\label{eq:2.2.3} 
\sum_i b_i(s^*)<\infty.
\end{equation}
Now we define a {\em Perron-Frobenius operator}\index{Perron-Frobenius operator} ,   $L\colon C_b(S)\to C_b(S)$, by 
\begin{equation}\label{eq:2.2.4} 
(Lf)(t) = \sum_i b_i(t) f(\theta_i(t))\mbox{\quad for }t\in S.
\end{equation}
Obviously, 
\[
\|Lf\|= \sup_{t\in S} |\sum_i b_i(t)f(\theta_i(t))|\leq \sum_i b_i(s^*) e^{M_0\Delta^\lambda}\|f\|<\infty\]
for all $f\in C_b(S)$, and hence $L$ is a continuous linear mapping from $C_b(S)$ to $C_b(S)$. We shall now see how we can use Birkhoff's Theorem \ref{thm:3.4} to prove that $L$ has a unique eigenvector, $v\in K(M_2,\lambda)$, with $\|v\|=1$, for all $M_2> M_0/(1-c^\lambda)$. Here $c$ is given in (\ref{eq:2.2.2}). Furthermore, for each $g\in K(M_2,\lambda)\setminus\{0\}$, 
\begin{equation}\label{eq:2.2.5} 
\lim_{k\to\infty} \Big{\|}\frac{L^kg}{\|L^k g\|} - v\Big{\|} = 0. 
\end{equation}

We will need the following lemma. 
\begin{lemma} \label{lem:2.2.1}
Let $d_2$ denote the Hilbert metric on $K(M_2,\lambda)$. If $0<M_1<M_2$, then $K(M_1,\lambda)\setminus\{0\}$ is a part of $K(M_2,\lambda)$ and 
\[
\sup\{d_2(f,g)\colon f,g\in K(M_1,\lambda)\setminus\{0\}\}<\infty. 
\] 
\end{lemma}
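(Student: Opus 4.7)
The plan is to make the claim quantitative: given any two nonzero $f, g \in K(M_1, \lambda)$, I would construct an explicit constant $\beta > 0$ such that $\beta g - f \in K(M_2, \lambda)$, and then invoke symmetry to also get $\alpha > 0$ with $f - \alpha g \in K(M_2, \lambda)$. This will establish that $f \sim_{K(M_2,\lambda)} g$, so $K(M_1,\lambda)\setminus\{0\}$ lies in a single part of $K(M_2,\lambda)$, and the explicit form of $\beta$ and $\alpha$ will yield the diameter bound.

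Preliminaries: every $f \in K(M_1,\lambda)\setminus\{0\}$ is strictly positive (by the noted property that $f(t)=0$ for some $t$ forces $f\equiv 0$), and by (\ref{eq:2.2.1}) applied at level $M_1$ we have $\sup f \leq e^{M_1\Delta^\lambda}\inf f$, so $\inf f > 0$; also $K(M_1,\lambda)\subseteq K(M_2,\lambda)$.

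The main computation: writing $R:=\rho(s,t)^\lambda$, the condition $\beta g - f \in K(M_2,\lambda)$ rearranges to
\[
\beta\bigl[g(t)e^{M_2 R}-g(s)\bigr] \;\geq\; f(t)e^{M_2 R}-f(s)\quad\text{for all } s,t\in S.
\]
Since $g\in K(M_1,\lambda)$ gives $g(s)\leq g(t)e^{M_1 R}$, the bracket on the left is at least $g(t)(e^{M_2 R}-e^{M_1 R})$. Since $f\in K(M_1,\lambda)$ also gives $f(s)\geq f(t)e^{-M_1 R}$, the right side is at most $f(t)(e^{M_2 R}-e^{-M_1 R})$. Thus it suffices that
\[
\beta \;\geq\; \frac{f(t)}{g(t)}\,\psi(R),\qquad \psi(R):=\frac{e^{M_2 R}-e^{-M_1 R}}{e^{M_2 R}-e^{M_1 R}}.
\]
The key point, and the only place where $M_1<M_2$ is used essentially, is that $\psi$ is bounded on $(0,\Delta^\lambda]$. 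Indeed $\psi$ is continuous on $(0,\Delta^\lambda]$, and by Taylor expansion at $R=0$ the numerator behaves like $(M_1+M_2)R$ while the denominator behaves like $(M_2-M_1)R$, so $\psi(R)\to (M_1+M_2)/(M_2-M_1)$, which is finite precisely because $M_2-M_1>0$. Hence $\psi$ extends continuously to $[0,\Delta^\lambda]$ and is bounded by some $K^\ast=K^\ast(M_1,M_2,\Delta^\lambda)$. This is the step I anticipate as the main technical pinch point; without the gap $M_2>M_1$ the argument collapses.

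Conclusion and diameter bound: setting $\beta = K^\ast \sup f / \inf g$ verifies $\beta g - f\in K(M_2,\lambda)$, and swapping $f,g$ gives $\beta' = K^\ast \sup g / \inf f$ with $\beta' f - g\in K(M_2,\lambda)$, equivalently $f \geq_{K(M_2,\lambda)} g/\beta'$. So $f\sim_{K(M_2,\lambda)} g$, proving the ``part'' statement. Moreover $M(f/g)\leq \beta$ and $m(f/g)\geq 1/\beta'$ in $K(M_2,\lambda)$, and using (\ref{eq:2.2.1}) at level $M_1$ to bound $\sup f/\inf f$ and $\sup g/\inf g$ by $e^{M_1\Delta^\lambda}$,
\[
d_2(f,g)\;\leq\;\log(\beta\beta')\;=\;2\log K^\ast + \log\frac{\sup f}{\inf f}+\log\frac{\sup g}{\inf g}\;\leq\;2\log K^\ast + 2M_1\Delta^\lambda,
\]
which is finite and independent of $f,g$, giving the uniform diameter bound.
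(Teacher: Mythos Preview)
Your proof is correct and follows essentially the same strategy as the paper: reduce the order inequality $f\leq_2 \beta g$ to a pointwise scalar inequality, use the $K(M_1,\lambda)$ bounds on $f$ and $g$ to control it by a single-variable ratio function, and then observe that this function stays bounded precisely because $M_1<M_2$. The one notable difference is that the paper works harder at the last step: it writes the ratio as $1+\theta(r)$ with $\theta(r)=(e^{\mu r}-e^{-\mu r})/(e^r-e^{\mu r})$, computes $\theta'$ and uses a power-series argument to show $\theta$ attains its maximum at $r=0$, thereby obtaining the explicit bound $d_2(f,g)\le 2\log\frac{M_2+M_1}{M_2-M_1}+2M_1\Delta^\lambda$; your compactness argument (continuity on $(0,\Delta^\lambda]$ plus the finite limit $(M_1+M_2)/(M_2-M_1)$ at $0$) is a legitimate shortcut that yields the finiteness statement in the lemma without this extra computation, at the cost of not pinning down the constant.
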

 \begin{proof}
 Let $\leq_2$ denote the partial ordering induced by $K(M_2,\lambda)$ on $C_b(S)$. 
 We need to show that there exists $0<\alpha\leq \beta$ such that 
 \begin{equation}\label{eq:2.2.6} 
 \alpha f\leq_2 g\leq_2 \beta f\mbox{\quad for all }f,g\in K(M_1,\lambda).
 \end{equation} 
 As $d_2(\sigma f,\tau g)=d_2(f,g)$ for all $\sigma,\tau>0$, we may as well assume that  
 $\|f\|=\|g\|=1$. Furthermore, by interchanging the roles of $f$ and $g$ in (\ref{eq:2.2.6}), we see that it suffices to prove that there exists $\alpha>0$ such that $\alpha f\leq_2 g$ for all $f,g\in K(M_1,\lambda)$ with $\|f\|=\|g\|=1$.  Recall that $\alpha f\leq_2 g$ is equivalent to
 \begin{equation}\label{eq:2.2.7} 
 g(s) -\alpha f(s) \leq (g(t)-\alpha f(t))e^{M_2\rho(s,t)^\lambda}\mbox{\quad for all } s,t\in S.
 \end{equation}
 Since $f,g\in K(M_1,\lambda)$, we know that 
 \begin{equation}\label{eq:2.2.8} 
 g(s)\leq g(t)e^{M_1\rho(s,t)^\lambda}\mbox{\quad and \quad}f(s)\leq f(t)e^{M_1\rho(s,t)^\lambda}
 \end{equation}
 for all $s,t\in S$. This implies that 
 \[
 e^{-M_1\Delta^\lambda}\leq g(t)\leq 1\mbox{\quad and \quad} e^{-M_1\Delta^\lambda}\leq f(t)\leq 1,
 \]
for all $t\in S$,  from which we deduce that 
\begin{equation}\label{eq:2.2.9} 
e^{-M_1\Delta^\lambda}\leq \frac{g(t)}{f(t)}\leq e^{M_1\Delta^\lambda}\mbox{\quad for all } t\in S.
\end{equation}

For convenience we shall assume that 
\begin{equation}\label{eq:2.2.10} 
\alpha <e^{-M_1\Delta^\lambda},
\end{equation}
which ensures that $g(t) -\alpha f(t)>0$ for all $t\in S$. This allows us  to rewrite (\ref{eq:2.2.7}) as
\begin{equation}\label{eq:2.2.11} 
\frac{g(s)-\alpha f(s)}{g(t) -\alpha f(t)}\leq e^{M_2\rho(s,t)^\lambda}\mbox{\quad for all }s,t\in S. 
\end{equation}

Now using (\ref{eq:2.2.8}) we see that for $s,t\in S$,
\begin{eqnarray*} 
\frac{g(s)-\alpha f(s)}{g(t) -\alpha f(t)} & \leq & 
 \frac{g(t) e^{M_1\rho(s,t)^\lambda} - \alpha f(t)e^{-M_1\rho(s,t)^\lambda}}{g(t)-\alpha f(t)}\\
 & \leq &  \frac{(g(t)/f(t)) e^{M_1\rho(s,t)^\lambda} - \alpha e^{-M_1\rho(s,t)^\lambda}}
 {(g(t)/f(t))-\alpha}.
\end{eqnarray*}
This implies for $s,t\in S$ that 
\begin{equation}\label{eq:2.2.12} 
 \frac{g(s)-\alpha f(s)}{g(t) -\alpha f(t)} \leq e^{M_1\rho(s,t)^\lambda} + \frac{\alpha(e^{M_1\rho(s,t)^\lambda} - e^{-M_1\rho(s,t)^\lambda})}{e^{-M_1\rho(s,t)^\lambda} -\alpha} 
\end{equation}

Write $r= M_2\rho(s,t)^\lambda$, so $r\geq 0$ and $M_1\rho(s,t)^\lambda = \mu r$, where $\mu=M_1/M_2<1$. From (\ref{eq:2.2.12}) we see that (\ref{eq:2.2.11}) is satisfied if $0<\alpha< e^{-M_1\Delta^\lambda}$, can be chosen so that 
\begin{equation}\label{eq:2.2.13}
\Big{(}\frac{\alpha}{e^{-M_1\Delta^\lambda}-\alpha}\Big{)}\Big{(}\frac{e^{\mu r} -e^{-\mu r}}{e^r - e^{\mu r}}\Big{)}\leq 1.
\end{equation}

Consider the function $\theta\colon [0,\infty)\to\mathbb{R}$ given by
\[
\theta (r) = \frac{e^{\mu r}- e^{-\mu r}}{e^r - e^{\mu r}}\mbox{ for }r>0\mbox{ \quad and\quad  }             \theta(0) = \frac{2\mu}{1-\mu}. 
\]
 Observe that $\theta$ is continuous on $[0,\infty)$ and $\lim_{r\to\infty} 
 \theta(r)=0$. 
 A simple calculation gives 
 \[
 \theta'(r) = \frac{-(1-\mu)e^{(\mu +1)r} -2\mu +(\mu +1)e^{(1-\mu)r}}{(e^r - e^{\mu r})^2}.
 \]
 By a power series expansion, for $r>0$ we have 
 \begin{multline*}
 -(1-\mu)e^{(\mu +1)r} -2\mu +(\mu+1)e^{(1-\mu)r}   = \\
  (1-\mu^2) \Big{(} -\sum_{j=2}^\infty \frac{(\mu+1)^{j-1}r^j}{j!} + 
  \sum_{j=2}^\infty \frac{(1-\mu)^{j-1}r^j}{j!}\Big{)}<0.
 \end{multline*}
 
 Thus, $\theta$ achieves its maximum on $[0,\infty)$ at $0$.  Let  
$\kappa =(M_2-M_1)/(M_2+M_1)$
and assume that $\alpha$ satisfies $0<\alpha\leq \kappa e^{-M_1\Delta^\lambda}$. By 
the previous remarks equation (\ref{eq:2.2.13}) will be satisfied if 
\[
\Big{(}\frac{\kappa}{1-\kappa}\Big{)}\Big{(}\frac{2\mu}{1-\mu}\Big{)}\leq 1.
\]
However, a calculation shows that the left-hand-side of the above expression actually equals $1$, so that we can take $\alpha=\kappa e^{-M_1\Delta^\lambda}$, and the proof is complete. 
 \end{proof}
 Note that we have actually shown that for each $f,g\in K(M_1,\lambda)$ with $\|f\|=\|g\|=1$, we can choose 
 \[
 \alpha =\Big{(}\frac{M_2-M_1}{M_2+M_1}\Big{)} e^{-M_1\Delta^\lambda}
 \mbox{\quad 
 and \quad}
 \beta =\Big{(}\frac{M_2+M_1}{M_2-M_1}\Big{)} e^{M_1\Delta^\lambda}.
 \]
 This implies that 
 \[
 d_2(f,g) \leq 2 \log \frac{M_2+M_1}{M_2-M_1} + 2 e^{M_1\Delta^\lambda}.
 \]
 for all $f,g\in K(M_1,\lambda)$.
 
 We shall also need the following result. 
 \begin{lemma}\label{lem:2.2.2} 
 Let $L\colon C_b(S)\to C_b(S)$ be given by (\ref{eq:2.2.4}), and assume that at least one $b_i$ is not identically $0$. If $M_2>M_0/(1-c^\lambda)$ and $M_1 = M_0 +c^\lambda M_2$, then 
 \[ 
 L(K(M_2,\lambda)\setminus\{0\})\subseteq K(M_1,\lambda)\setminus\{0\}.
 \]
  \end{lemma}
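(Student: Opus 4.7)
The plan is to verify the two requirements: first that $Lf$ satisfies the Hölder-type inequality defining $K(M_1,\lambda)$, and second that $Lf$ is not identically zero when $f$ is nonzero. The first part is where all the arithmetic is concentrated, while the second part will follow from a qualitative observation about nonzero elements of cones of the form $K(M,\lambda)$.

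For the first part, I would fix $f \in K(M_2,\lambda)\setminus\{0\}$ and estimate $(Lf)(s)/(Lf)(t)$ termwise. For each $i$, I would bound
\[
b_i(s) f(\theta_i(s)) \leq \bigl(b_i(t) e^{M_0 \rho(s,t)^\lambda}\bigr)\bigl(f(\theta_i(t)) e^{M_2 \rho(\theta_i(s),\theta_i(t))^\lambda}\bigr),
\]
using that $b_i \in K(M_0,\lambda)$ for the first factor and that $f \in K(M_2,\lambda)$ for the second. The Lipschitz bound \eqref{eq:2.2.2} gives $\rho(\theta_i(s),\theta_i(t))^\lambda \leq c^\lambda \rho(s,t)^\lambda$, so each term is dominated by $b_i(t) f(\theta_i(t)) e^{(M_0 + c^\lambda M_2)\rho(s,t)^\lambda} = b_i(t)f(\theta_i(t)) e^{M_1 \rho(s,t)^\lambda}$. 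Summing over $i$ then yields $(Lf)(s) \leq (Lf)(t) e^{M_1 \rho(s,t)^\lambda}$, which is exactly the membership condition for $K(M_1,\lambda)$. The hypothesis $M_2 > M_0/(1-c^\lambda)$ is only needed to ensure $M_1 < M_2$, so that the conclusion is a genuine refinement (and so that the subsequent Lemma~\ref{lem:2.2.1} applies to pairs $M_1 < M_2$); it is not needed for the inclusion itself.

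For the second part, I would use the observation made just after the definition of $K(M,\lambda)$: any $h \in K(M,\lambda)$ that vanishes at one point vanishes identically. Applied to $f \in K(M_2,\lambda)\setminus\{0\}$ this gives $f>0$ on $S$, and applied to any $b_{i_0} \not\equiv 0$ (which exists by hypothesis) it gives $b_{i_0}>0$ on $S$. Then $(Lf)(t) \geq b_{i_0}(t) f(\theta_{i_0}(t)) > 0$ for every $t \in S$, so $Lf$ is not only nonzero but strictly positive everywhere.

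I do not expect a genuine obstacle here; the only step that requires a moment of care is lining up the two exponential factors so that the exponent becomes precisely $M_0 + c^\lambda M_2$. The choice $M_1 = M_0 + c^\lambda M_2$ is then simply what this computation forces, and the compatibility condition $M_1 < M_2$ (equivalently $M_2 > M_0/(1-c^\lambda)$) is exactly the algebraic consequence of wanting the iteration $L\colon K(M_2,\lambda) \to K(M_1,\lambda) \subseteq K(M_2,\lambda)$ to produce a strict reduction of the Hölder parameter.
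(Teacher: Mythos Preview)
Your proposal is correct and follows essentially the same route as the paper: termwise estimation using $b_i\in K(M_0,\lambda)$, $f\in K(M_2,\lambda)$, and the Lipschitz bound on $\theta_i$ to get each summand in $K(M_1,\lambda)$, then summing (the paper phrases the last step as ``$K(M_1,\lambda)$ is a closed cone'' to handle the infinite sum, which is the same content). Your remark that the hypothesis $M_2>M_0/(1-c^\lambda)$ is only used to guarantee $M_1<M_2$ for the subsequent application of Lemma~\ref{lem:2.2.1}, rather than for the inclusion itself, is also correct.
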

  \begin{proof}
  If $f\in K(M_2,\lambda)\setminus\{0\}$, then $f\circ\theta_i\in K(c^\lambda M_2,\lambda)$, because 
  \[
  f(\theta_i(s))\leq f(\theta_i(t))e^{M_2\rho(\theta_i(s),\theta_i(t))^\lambda}
 		\leq f(\theta_i(t))e^{c^\lambda M_2\rho(s,t)^\lambda}
  \]
  for all $s,t\in S$. As $b_i\in K(M_0,\lambda)$, we have 
  \[
  b_i(s)\leq b_i(t)e^{M_0\rho(s,t)^\lambda}\mbox{\quad for all }s,t\in S.
  \]
  Combining these two inequalities gives
  \[
  b_i(s)f(\theta_i(s))\leq b_i(t)f(\theta_i(t))e^{(M_0+c^\lambda M_2)\rho(s,t)^\lambda}
  \]
 for all $s,t\in S$, which shows that $b_i(\cdot)f(\theta_i(\cdot))\in K(M_1,\lambda)$ for all $i\in\mathbb{N}$. Since $K(M_1,\lambda)$ is a closed cone, we find that $L(f)\in K(M_1,\lambda)$. 
 
 It remains to show that $L(f)\neq 0$ for $f\in K(M_2,\lambda)\setminus\{0\}$. 
 Note that as $f\neq 0$ and $f\in K(M_2,\lambda)$, we know that $f(s)>0$ for all $s\in S$. As each $b_i\in K(M_0,\lambda)$,  $b_i(s)\geq 0$ for all $s\in S$. Furthermore, there exists $j$ such that $b_j\neq 0$, and hence $b_i(t)>0$ for all 
 $t \in S$. Thus, $L(f)(t)\geq b_j(t)f(\theta_j(t))>0$ for all $t\in S$, which completes the proof.
  \end{proof}
 We can now use Birkhoff's Theorem \ref{thm:3.4} to prove the following result. 
 \begin{theorem}\label{thm:2.2.2} Let $L\colon C_b(S)\to C_b(S)$ be given by (\ref{eq:2.2.4}), and assume that at least one $b_i$ is not identically $0$. If $M_2>M_0/(1-c^\lambda)$,  then there exists a unique $v\in K(M_2,\lambda)$ with $\|v\|=1$, such that 
$ Lv = \|Lv\| v$.
 Furthermore, for each $g\in K(M_2,\lambda)\setminus\{0\}$ we have 
 \begin{equation}\label{eq:limit}
 \lim_{k\to\infty} \Big{\|}\frac{L^kg}{\|L^kg\|} - v\Big{\|}=0.
 \end{equation}
 \end{theorem}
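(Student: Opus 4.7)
The plan is to deduce Theorem~\ref{thm:2.2.2} by verifying the hypotheses of Theorem~\ref{thm:3.5} for the operator $L$ on the cone $C=K(M_2,\lambda)$, with $p=1$, and then to upgrade the resulting Hilbert-metric convergence to norm convergence of the normalized iterates.

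First I would record the structural input: by \cite[Lemma 3.2]{NPVL} the cone $K(M_2,\lambda)\subseteq C_b(S)$ is closed and normal, and as observed right after (\ref{eq:2.2.4}) the operator $L$ is continuous and linear on $C_b(S)$, and so a fortiori cone linear and continuous at $0$. Setting $M_1=M_0+c^\lambda M_2$, the hypothesis $M_2>M_0/(1-c^\lambda)$ gives $M_1<M_2$, so Lemma~\ref{lem:2.2.2} yields
\[
L\bigl(K(M_2,\lambda)\setminus\{0\}\bigr)\subseteq K(M_1,\lambda)\setminus\{0\}.
\]
Combining this with Lemma~\ref{lem:2.2.1}, every pair $Lf$, $Lg$ with $f,g\in K(M_2,\lambda)\setminus\{0\}$ lies in a single part $P$ of $K(M_2,\lambda)$ (the part containing $K(M_1,\lambda)\setminus\{0\}$) whose Hilbert-metric diameter is finite. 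Hence the projective diameter $\Delta(L)$ of $L$ on $K(M_2,\lambda)$ is finite, and $L^{2}(K(M_2,\lambda))\neq\{0\}$ follows from the positivity argument in the proof of Lemma~\ref{lem:2.2.2} (some $b_j$ is not identically zero, so $(Lf)(t)\geq b_j(t)f(\theta_j(t))>0$).

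With $p=1$, Theorem~\ref{thm:3.5} then produces a unique $v\in K(M_2,\lambda)$ with $\|v\|=1$ and $Lv=r_C(L)v$; applying $\|\cdot\|$ to this identity gives $r_C(L)=\|Lv\|$, which is exactly $Lv=\|Lv\|v$. Moreover Theorem~\ref{thm:3.5} gives the contraction estimate
\[
d(L^{k}g,v)\leq c^{k}\,d(g,v),\qquad g\in K(M_2,\lambda)\setminus\{0\},
\]
with $c=\tanh(\Delta(L)/4)<1$. By part~(iv) of Lemma~\ref{lem:2.1.1} the Hilbert metric is invariant under positive rescaling of either argument, so
\[
d\!\left(\frac{L^{k}g}{\|L^{k}g\|},\,v\right)=d(L^{k}g,v)\longrightarrow 0.
\]

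The last step is to pass from Hilbert-metric convergence to norm convergence in (\ref{eq:limit}). Here I invoke the fact, recorded in the last remark of Section~2, that if $C$ is a closed normal cone and $q\colon C\setminus\{0\}\to(0,\infty)$ is a continuous positively homogeneous function of degree one, then $(\Sigma_q\cap P,d)$ is a complete metric space whose topology coincides with the norm topology; taking $q(x)=\|x\|$, both $v$ and the normalized iterates $L^{k}g/\|L^{k}g\|$ (for $k\geq 1$) lie in $\Sigma_q\cap P$, so the Hilbert-convergence above forces norm convergence, which is (\ref{eq:limit}). Uniqueness of $v$ with $\|v\|=1$ and $Lv=\|Lv\|v$ is inherited directly from Theorem~\ref{thm:3.5}.

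I expect the main (minor) obstacle to be the bookkeeping ensuring that Theorem~\ref{thm:3.5} is genuinely applicable here: one must identify the eigenvalue $r_C(L)$ with $\|Lv\|$, confirm that the convergence statement of Theorem~\ref{thm:3.5} can be applied to arbitrary $g\in K(M_2,\lambda)\setminus\{0\}$ (not just to elements of the designated part $P$), and then bridge the Hilbert/norm topologies. The first application of $L$ pushes any nonzero $g$ into the part $P$, so from $k=1$ onwards the iterates live in a single part where the Hilbert metric, the invariance property (iv) of Lemma~\ref{lem:2.1.1}, and the topological equivalence on the unit sphere all apply cleanly.
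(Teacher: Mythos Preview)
Your proposal is correct and follows essentially the same route as the paper: verify that $K(M_2,\lambda)$ is a closed normal cone, use Lemmas~\ref{lem:2.2.1} and~\ref{lem:2.2.2} to get $\Delta(L)<\infty$ and $L^2(K(M_2,\lambda))\neq\{0\}$, and then invoke Theorem~\ref{thm:3.5} with $p=1$. The paper's proof is terser---it simply asserts that Theorem~\ref{thm:3.5} yields both the eigenvector and the norm limit~(\ref{eq:limit})---whereas you spell out the identification $r_C(L)=\|Lv\|$ and the passage from Hilbert-metric convergence to norm convergence via the topological equivalence on $\Sigma_{\|\cdot\|}\cap P$; these extra details are accurate and make the argument more self-contained.
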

 \begin{proof}
 Let $\leq_2$ denote the partial ordering from $K(M_2,\lambda)$ and denote the restriction of $L$ to $K(M_2,\lambda)$ by $L_0$. Note that it follows from Lemmas \ref{lem:2.2.1} and \ref{lem:2.2.2} that the projective diameter $\Delta(L_0)<\infty$.  Now let $g\in K(M_2,\lambda)\setminus\{0\}$. It follows from Lemma \ref{lem:2.2.2} that  $L_0g\in K(M_1,\lambda)$ and $L_0g \neq 0$. As $K(M_1,\lambda)\subseteq K(M_2,\lambda)$ the same applies to $L_0g$.  So, $L_0^2g \in K(M_1,\lambda)\setminus\{0\}$, and hence $L_0^2(K(M_2,\lambda))\neq \{0\}$. It now follows from 
 Theorem \ref{thm:3.5} that there exists a unique $v\in K(M_2,\lambda)$, with $\|v\|=1$, such that $Lv= \|Lv\| v$, and for each $g\in K(M_2,\lambda)\setminus\{0\}$,
 Equation (\ref{eq:limit}) holds.
 \end{proof}
 \begin{remark}
 As Theorem \ref{thm:2.2.2} holds for all $M_2> M_0/(1-c^\lambda)$, the unique eigenvector $v$ in Theorem \ref{thm:2.2.2} belongs to $K(M_0/(1-c^\lambda),\lambda)$.  It can be shown that the elements of $K(M,\lambda)$ are H\"older continuous functions with H\"older exponent $\lambda$, and hence $v$ is H\"older with H\"older exponent $\lambda$. 
 \end{remark}

\section{Special cones} 
Particularly important examples of  finite-dimensional cones arising in analysis are the {\em standard positive cone},\index{standard positive cone},
\[
\mathbb{R}^n_+ =\{x\in\mathbb{R}^n\colon x_i\geq 0\mbox{ for }i=1,\ldots,n\}
\]
and the {\em Lorentz cone}, \index{Lorentz cone},
\[
\Lambda_{n+1}=\{(s,x)\in\mathbb{R}\times\mathbb{R}^n\colon s^2\geq x_1^2+\cdots + x_n^2\mbox{ and }s\geq 0\}.
\]
Other interesting examples are cones of linear operators, such as the cone $\Pi_n(\mathbb{R})$ consisting of positive semi-definite $n\times n$ matrices in the real vector space $\mathrm{Sym}_n$  of all $n\times n$ symmetric matrices. 
The cones mentioned above are all  examples of so-called symmetric cones. Recall that a cone $C$,  with nonempty interior, in a finite-dimensional inner product space $(V,\langle\cdot \mid\cdot\rangle)$ is called {\em self-dual} if $C^*=C$. Furthermore $C$ is called {\em homogeneous} if $\mathrm{Aut}(C)=\{A\in \mathrm{GL}(V)\colon A(C)=C\}$ acts transitively on $C^\circ$.  The interior of a self-dual homogeneous cone is called a {\em symmetric cone}. 

It is well-known that the symmetric cones in finite dimensions are in one-to-one correspondence with the interiors of the cones of squares of Euclidean Jordan algebras. This fundamental result was proved  by Koecher \cite{Koe} and Vinberg \cite{Vin}. A detailed exposition of the theory of symmetric cones can be found in \cite{FK}.  Recall that a {\em Euclidean Jordan algebra} \index{Euclidean Jordan algebra} is a finite-dimensional real inner-product space $(V,\langle\cdot \mid\cdot\rangle)$ equipped with a bilinear product $(x,y)\mapsto xy$ from $V\times V$ into $V$ such that for each $x,y\in V$
\begin{enumerate}
\item[(J1)] $xy=yx$, 
\item[(J2)] $x(x^2y)=x^2(xy)$, 
\item[(J3)] for each $x\in V$, the linear mapping $L(x)\colon V\to V$ defined by $L(x)y =xy$ 
satisfies 
\[
\langle L(x)y\mid z\rangle = \langle y\mid L(x)z\rangle\mbox{\quad for all }y,z\in V. 
\]
\end{enumerate}
Note that a Euclidean Jordan algebra is commutative, but in general not associative. 
For example,  $\mathrm{Sym}_n$ can be equipped with the usual inner product $\langle A\mid B\rangle =\mathrm{tr}(BA)$ and (Jordan) product $A\circ B = (AB+BA)/2$. In this case the interior of cone of squares in $\mathrm{Sym}_n$ is precisely $\Pi_n(\mathbb{R}^\circ)$, as 
\[
\Pi_n(\mathbb{R})^\circ = \{A\in\mathrm{Sym}_n\colon A\mbox{ is positive definite}\}= 
\{A^2\colon A\in\mathrm{Sym}_n\}^\circ.
\] 

When studying a Hilbert geometry $(\Omega,\delta)$, it is often useful to view $\Omega$ as the (relative) interior of the intersection of a cone in a vector space and a hyperplane $H$, and use Birkhoff's version of Hilbert's metric. In many interesting cases it gives an alternative formula to compute the distance and provides  additional tools to analyze  the Hilbert geometry. To illustrate this we need to recall some basic concepts. 

Let $C$ be a closed cone with nonempty interior in a finite-dimensional vector space $V$. For $u\in C^\circ$ define $\Sigma^*_u=\{\phi \in C^*\colon \phi(u)=1\}$, which is a compact convex set in $V^*$. Let $\mathcal{E}_u$ denote the closure of the set of extreme points of $\Sigma_u^*$. Recall that $\phi\in\Sigma_u^*$ is an extreme point if there exist no $\psi,\rho \in\Sigma_u^*$  such that $\phi =\lambda \psi+(1-\lambda)\rho$ for some $0<\lambda <1$. A basic result in convex analysis, due to Minkowski, says that each compact convex set in a finite-dimensional vector space is the closed convex hull of its extreme points, see \cite[Corollary 13.5]{Rock}. 
This result, and its infinite-dimensional extension, is usually called the Krein-Milman theorem.  

It is easy to show that if $C$ is a closed cone with nonempty interior and 
$u \in C^\circ$, then for $x,y\in V$ we have $x\leq_C y$ if, and only if, $\phi(x)\leq \phi(y)$ for all $\phi \in \mathcal{E}_u$. Indeed, it follows from the Krein-Milman theorem that $\phi(x)\leq\phi(y)$ for all $\phi\in C^*$ is equivalent to $\phi(x)\leq \phi(y)$ for all $\phi\in \mathcal{E}_u$. 
Now if $x\nleq_C y$, then $y-x\not\in C$. So, by the Hahn-Banach separation theorem, there exist $\alpha\in\mathbb{R}$ and $\psi\in V^*$ such that $\psi(y-x)<\alpha$ and $\psi(z)>\alpha$ for all $z\in C$. As $\lambda z\in C$ for all $\lambda\geq 0$ and $z\in C$, we see that $\psi(z)\geq 0$ for all $z\in C$, and hence $\psi\in C^*$. Obviously, $\psi(0)=0$, and hence $\alpha<0$. Thus, $\psi(y)<\psi(x)$.  On the other hand, $x\leq_C y$ implies $\phi(x)\leq \phi(y)$ for all $\phi\in C^*$. So, we have the following result. 
\begin{lemma}\label{lem:4.1} 
If $C$ is a closed cone with nonempty interior in a finite-dimensional vector space, and $u\in C^\circ$, then for $x\in V$ and $y\in C^\circ$ we have  
\begin{equation}\label{eq:4.1}
M(x/y) =\max_{\phi\in\mathcal{E}_u}\frac{\phi(x)}{\phi(y)}
\mbox{\quad and\quad }
m(x/y) =\min_{\phi\in\mathcal{E}_u}\frac{\phi(x)}{\phi(y)}.
\end{equation}
\end{lemma}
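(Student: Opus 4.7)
The plan is to apply the characterization of the partial order established in the paragraph preceding the lemma: for $a,b\in V$, we have $a\leq_C b$ if and only if $\phi(a)\leq\phi(b)$ for every $\phi\in\mathcal{E}_u$. Two preliminary observations are needed. First, since $y\in C^\circ$, pick $\epsilon>0$ with $B(y,\epsilon)\subseteq C$. For any $\phi\in C^*\setminus\{0\}$, the inequality $\phi(y+\epsilon v)\geq 0$ for all $\|v\|\leq 1$ gives $\phi(y)\geq\epsilon\|\phi\|>0$. In particular $\phi(y)>0$ for every $\phi\in\mathcal{E}_u$, since each such $\phi$ satisfies $\phi(u)=1$ and is therefore nonzero. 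Second, the same reasoning applied to $u$ yields a uniform norm bound on $\Sigma_u^*$, so $\Sigma_u^*$ is compact and hence $\mathcal{E}_u$ (a closed subset) is compact.

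With these in hand, one rewrites the condition $x\leq_C \beta y$ that appears in the definition of $M(x/y)$. By the characterization, it is equivalent to $\phi(x)\leq\beta\phi(y)$ for all $\phi\in\mathcal{E}_u$, and since $\phi(y)>0$ on $\mathcal{E}_u$, this is in turn equivalent to $\phi(x)/\phi(y)\leq\beta$ for every $\phi\in\mathcal{E}_u$. Therefore
\begin{equation*}
M(x/y)=\inf\Bigl\{\beta\in\mathbb{R}:\frac{\phi(x)}{\phi(y)}\leq\beta\text{ for all }\phi\in\mathcal{E}_u\Bigr\}=\sup_{\phi\in\mathcal{E}_u}\frac{\phi(x)}{\phi(y)}.
\end{equation*}
Because the function $\phi\mapsto\phi(x)/\phi(y)$ is continuous on the compact set $\mathcal{E}_u$ (its denominator is continuous and, by compactness, bounded away from zero), the supremum is attained and is actually a maximum. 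The formula for $m(x/y)$ is obtained by the parallel argument starting from $\alpha y\leq_C x\iff\alpha\leq\phi(x)/\phi(y)$ for all $\phi\in\mathcal{E}_u$.

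The proof is genuinely short: essentially all the work has been done in the preparatory discussion preceding the lemma. The only point worth a brief check is that $M(x/y)$ and $m(x/y)$ are finite for arbitrary $x\in V$ (the definition only requires that $y$ dominate $x$), but the same ball argument shows that $\beta y\pm x\in C$ for sufficiently large $\beta$, so $y\in C^\circ$ dominates every $x\in V$. No real obstacle arises; the whole statement is a direct translation of the Krein--Milman/Hahn--Banach characterization of $\leq_C$ into the language of the functionals $M(\cdot/\cdot)$ and $m(\cdot/\cdot)$.
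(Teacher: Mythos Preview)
Your argument is correct and follows exactly the route the paper takes: the paper establishes the equivalence $x\leq_C y \iff \phi(x)\leq\phi(y)$ for all $\phi\in\mathcal{E}_u$ in the paragraph immediately preceding the lemma and then presents the lemma as a direct consequence, which is precisely what you have spelled out. Your additional remarks (positivity of $\phi(y)$, compactness of $\mathcal{E}_u$, finiteness of $M(x/y)$) are the natural details one fills in, and they are all handled correctly.
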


\begin{remark}
The identities in (\ref{eq:4.1}) also hold for closed  cones in infinite-dimensional topological vector spaces. Indeed, let $C$ be a closed cone, with nonempty interior in a topological vector space $V$. Given $u\in C^\circ$, one can define the {\em order unit norm}, $\|\cdot\|_u$, by 
\[
\|x\|_u =\inf\{\alpha>0\colon -\alpha u\leq_C x\leq_C\alpha u\}
\]
for all $x\in V$. With respect to this norm each linear functional, $\phi \colon V \to\mathbb{R}$, with $\phi ( V )\subseteq [0,\infty)$, is continuous, because $|\phi(x)|\leq \phi(u)$ for all $x\in V$ with $\|x\|_u\leq 1$. Now let $C^*=\{\phi\colon V\to\mathbb{R} \mid \phi (V)\subseteq [0,\infty)\}$ be the dual of $C$ in 
$(V,\|\cdot\|_u)^*$ and let $\Sigma_u^*=\{\phi\in C^*\colon \phi(u)=1\}$. Note that 
$\Sigma_u^*$ is a closed subset of the unit ball, $B^*$, of $(V,\|\cdot\|_u)^*$. 
So, $\Sigma_u^*$ is a weak-$*$ compact set, as $B^*$ is weak-$*$ compact by the Banach-Alaoglu theorem. If we now let $\mathcal{E}_u$ be the weak-$*$ closure of the set of extreme points of $\Sigma_u^*$, then $\mathcal{E}_u$ is weak-$*$ compact and the equalities in (\ref{eq:4.1}) hold. 

\end{remark}
\subsection{Simplicial cones} 

A cone $C$ in an $n$-dimensional vector space $V$ is called a {\em simplicial cone} \index{simplicial cone} if there exist $v_1,\ldots,v_n\in V$, linearly independent, such that 
\[
C = \{\sum_{i=1}^n \lambda_i v_i\colon \lambda_i\geq 0\mbox{ for all }i\}.
\] 
A basic example is the standard positive cone $\mathbb{R}^n_+$. We can apply Lemma \ref{lem:4.1} to derive an explicit formula for Hilbert's metric on $(\mathbb{R}^n_+)^
\circ$, and use this formula to give a simple proof of the well known fact that the Hilbert geometry on the open {\em standard $(n-1)$-dimensional simplex}, 
$\Delta_{n-1}^\circ =\{x\in (\mathbb{R}^n_+)^\circ\colon \sum_{i=1}^n x_i =1\}
$, 
is isometric to an $(n-1)$-dimensional normed space, see \cite{dlH,Nmem1,Pha}. 

Indeed, Lemma \ref{lem:4.1}, or a direct simple argument,  gives
\[
M(x/y) =\max_i x_i/y_i\mbox{\quad and\quad }m(x/y) = \min_j x_j/y_j
\]
for all $x.y\in(\mathbb{R}^n_+)^\circ$. 
Now consider the mapping $\mathrm{Log}\colon (\mathbb{R}^n_+)^\circ\to\mathbb{R}^n$ given by, $\mathrm{Log}(x) = (\log x_1,\ldots,\log x_n)$ for $x\in(\mathbb{R}^n_+)^\circ$, and equip $\mathbb{R}^n$ with the {\em variation norm},  \index{variation norm},
\[
\|w\|_{\mathrm{var}}= \max_i w_i -\min_j w_j.
\]
Note that 
$
\log M(x/y) =\log \max_i x_i/y_i =\max_i \log x_i -\log  y_i$. 
Likewise, we see that $\log m(x/y) = \min_j \log x_j -\log y_j$.
Thus, the mapping, $x\mapsto \mathrm{Log}(x)$, is an isometry from $((\mathbb{R}^n_+)^\circ, d)$ onto $(\mathbb{R}^n,\|\cdot\|_{\mathrm{var}})$.

If we let $H=\{x\in (\mathbb{R}^n_+)^\circ\colon x_n =1\}$, then $(H,d)$ is a genuine metric space, which is isometric to $(\Delta_{n-1}^\circ,d)$ by Lemma \ref{lem:2.1.1}(iv). Clearly $\mathrm{Log}(H)=\{x\in\mathbb{R}^n\colon x_n=0\}$, which we can identify with $\mathbb{R}^{n-1}$ by projecting out the last coordinate. It follows that $(\Delta_{n-1}^\circ,d)$ is isometric to the $(n-1)$-dimensional normed space $(\mathbb{R}^{n-1},\|\cdot\|_H)$, where 
\[
\|x\|_H =\max\{x_1,\ldots,x_{n-1},0\}-\min\{x_1,\ldots,x_{n-1},0\}.
\]
If we now use Theorem \ref{thm:2.2}, we arrive at the following result. 
\begin{theorem}\label{thm:simplex}
The Hilbert geometry $(\Delta_n^\circ,\delta)$ is isometric to $(\mathbb{R}^n,\|\cdot\|_H)$.
\end{theorem}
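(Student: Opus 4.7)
The plan is simply to assemble the ingredients that have been accumulated in the immediately preceding discussion. Working in $V=\mathbb{R}^{n+1}$ with the cone $C=\mathbb{R}^{n+1}_+$, I would take the affine hyperplane $H=\{x\in\mathbb{R}^{n+1}\colon x_{n+1}=1\}$, so that $\Omega_C = H\cap C^\circ$ is an open bounded convex set of dimension $n$. By Theorem \ref{thm:2.2} the restriction of Birkhoff's metric $d$ to $\Omega_C$ agrees with Hilbert's cross-ratio metric $\delta$, so it suffices to exhibit an isometry from $(\Omega_C,d)$ onto $(\mathbb{R}^n,\|\cdot\|_H)$; a trivial change of coordinates then identifies $\Omega_C$ with $\Delta_n^\circ$.

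Next I would invoke Lemma \ref{lem:4.1} (or simply note directly, using that the extreme rays of $(\mathbb{R}^{n+1}_+)^*$ are the coordinate functionals) to get the explicit formulas
\[
M(x/y)=\max_{1\leq i\leq n+1}\frac{x_i}{y_i},\qquad m(x/y)=\min_{1\leq j\leq n+1}\frac{x_j}{y_j}
\]
on $(\mathbb{R}^{n+1}_+)^\circ$. Taking logarithms and using the defining formula (\ref{eq:2.2}) gives
\[
d(x,y)=\max_i(\log x_i-\log y_i)-\min_j(\log x_j-\log y_j)=\|\mathrm{Log}(x)-\mathrm{Log}(y)\|_{\mathrm{var}},
\]
so $\mathrm{Log}\colon ((\mathbb{R}^{n+1}_+)^\circ,d)\to (\mathbb{R}^{n+1},\|\cdot\|_{\mathrm{var}})$ is an isometric surjection.

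Now I would restrict $\mathrm{Log}$ to $\Omega_C$. Its image is the hyperplane $\{w\in\mathbb{R}^{n+1}\colon w_{n+1}=0\}$, which I identify with $\mathbb{R}^n$ via the projection $(w_1,\dots,w_n,0)\mapsto (w_1,\dots,w_n)$. Under this identification the variation norm becomes
\[
\|x\|_H=\max\{x_1,\dots,x_n,0\}-\min\{x_1,\dots,x_n,0\},
\]
by definition. So $\mathrm{Log}$ yields an isometry $(\Omega_C,d)\to(\mathbb{R}^n,\|\cdot\|_H)$. Finally, Lemma \ref{lem:2.1.1}(iv) (scale invariance of $d$) shows that radial projection onto $\Delta_n^\circ$ is an isometry between $(\Omega_C,d)$ and $(\Delta_n^\circ,d)$, and combining with Theorem \ref{thm:2.2} finishes the argument.

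There is no real obstacle here: every computational step is carried out in the text just above the theorem, and the only thing to watch is the index bookkeeping — in particular, remembering to work in $\mathbb{R}^{n+1}$ so that the ambient open set $\Delta_n^\circ$ has the correct dimension $n$, and that the hyperplane constraint $w_{n+1}=0$ is precisely what turns $\|\cdot\|_{\mathrm{var}}$ into $\|\cdot\|_H$.
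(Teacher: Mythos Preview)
Your approach is exactly the paper's: use Lemma~\ref{lem:4.1} for the explicit formulas for $M$ and $m$, apply $\mathrm{Log}$ to pass to the variation norm, restrict to the hyperplane $\{x_{n+1}=1\}$ and project out the last coordinate to land in $(\mathbb{R}^n,\|\cdot\|_H)$, then invoke Lemma~\ref{lem:2.1.1}(iv) and Theorem~\ref{thm:2.2}. One correction, though: your $\Omega_C=\{x\in(\mathbb{R}^{n+1}_+)^\circ\colon x_{n+1}=1\}$ is \emph{not} bounded (let $x_1\to\infty$), so Theorem~\ref{thm:2.2} does not apply to it and the cross-ratio metric $\delta$ is not even defined there. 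This does no real damage, since your final paragraph applies Theorem~\ref{thm:2.2} correctly to the bounded set $\Delta_n^\circ$; simply drop the opening appeal to Theorem~\ref{thm:2.2} on $\Omega_C$ and the claim that $\Omega_C$ is bounded.
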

The unit ball of $\|\cdot\|_H$ in $\mathbb{R}^n$ is a polytope with $n(n+1)$ facets. In fact, it is a hexagon, when $n=2$, which was already known to Phadke \cite{Pha}, and  a rhombic-dodecahedron, when $n=3$. 

If $\Delta= \mathrm{conv}\{v_1,\ldots,v_{n+1}\}$ is a simplex with nonempty interior in an $n$-dimensional vector space $V$, then $C=\{(\lambda v,\lambda)\in V\times \mathbb{R}\colon v\in \Delta\mbox{ and }\lambda\geq 0\}$ is a simplicial cone  in $V'=V\times\mathbb{R}$.  Note that $v'_1 =(v_1,1),\ldots,v_{n+1}'=(v_{n+1},1)$ is a basis for $V'$ and  $C=\{\sum_i\lambda_iv'_i\colon \lambda_i\geq 0\mbox{ for all }i\}$. Moreover, the linear mapping $A\colon V'\to\mathbb{R}^{n+1}$ given by 
\[
A(\sum_i\mu_iv'_i) =\sum_i \mu_ie_i,
\]  
where $\mu_1,\ldots,\mu_{n+1}\in\mathbb{R}$ and $e_1,\ldots,e_{n+1}$ are the standard basis vectors in $\mathbb{R}^{n+1}$, is invertible and maps $C$ onto $\mathbb{R}^{n+1}_+$. So, $A$ is an isometry from $(C^\circ,d)$ onto $((\mathbb{R}^{n+1}_+)^\circ,d)$. 
By combining this observation with Theorem \ref{thm:simplex} it is easy to show that the Hilbert geometry on $\Delta^\circ$ is isometric to $(\mathbb{R}^n,\|\cdot\|_H)$. In \cite{FoK} Foertsch and Karlsson showed that the only Hilbert geometry isometric to a normed space is the  one on an open simplex. Thus, the following result holds. 
\begin{theorem}
An $n$-dimensional Hilbert geometry is isometric to a normed space if and only if its domain is an open $n$-dimensional simplex. In that case it is isometric to $(\mathbb{R}^n,\|\cdot\|_H)$. 
\end{theorem}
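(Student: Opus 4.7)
For the ``if'' direction, I would essentially assemble the material in the paragraph preceding the theorem. Given any $n$-dimensional simplex $\Delta=\mathrm{conv}\{v_1,\ldots,v_{n+1}\}\subseteq V$ with nonempty interior, form the simplicial cone $C\subseteq V\times\mathbb{R}$ generated by $v'_i=(v_i,1)$, so that $\Delta^\circ$ is a hyperplane section of $C^\circ$. Theorem~\ref{thm:2.2} identifies $\delta$ on $\Delta^\circ$ with the restriction of Birkhoff's metric $d$ on $C^\circ$. The linear map sending each $v'_i$ to the standard basis vector $e_i\in\mathbb{R}^{n+1}$ is an order-isomorphism, hence a $d$-isometry from $(C^\circ,d)$ onto $((\mathbb{R}^{n+1}_+)^\circ,d)$; composing with the $\mathrm{Log}$ map and projecting out one coordinate produces the promised isometry onto $(\mathbb{R}^n,\|\cdot\|_H)$.

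For the ``only if'' direction, suppose $\Phi\colon(\Omega,\delta)\to(W,\|\cdot\|)$ is an isometry onto some $n$-dimensional normed space. The strategy is to transport the rich metric structure of $(W,\|\cdot\|)$ back to $(\Omega,\delta)$ and extract polyhedral rigidity on $\partial\Omega$. I would first pull back the translation group of $W$: for every $w\in W$ the map $\Phi^{-1}\circ T_w\circ\Phi$ is a $\delta$-isometry, providing a simply transitive abelian group of isometries of $(\Omega,\delta)$. Second, I would exploit the fact that in a normed space every geodesic segment extends uniquely to a bi-infinite affine geodesic line; pushed through $\Phi^{-1}$, this forces each straight-line $\delta$-geodesic in $\Omega$ to admit a bi-infinite extension, which in turn controls how chords of $\Omega$ terminate on $\partial\Omega$.

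The main obstacle is the final reduction: turning these metric constraints into the statement that $\partial\Omega$ is the boundary of a simplex. I would approach this through the horofunction (Busemann) boundary. In a normed space horofunctions are affine functionals parametrised by the dual unit ball, whereas in $(\Omega,\delta)$ the formulas of Lemma~\ref{lem:4.1} describe Busemann functions at $\xi\in\partial\Omega$ in terms of the face of $\partial\Omega$ containing $\xi$ and the extremal rays of the associated dual cone. Matching the two horofunction boundaries, combined with the transitive abelian isometry action obtained above, should compel the cone over $\Omega$ to be simplicial, hence $\Omega$ itself to be an open $n$-simplex. Once $\Omega$ is known to be a simplex, the second clause of the theorem reduces to the ``if'' direction already established in the first paragraph.
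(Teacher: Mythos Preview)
Your ``if'' direction is exactly what the paper does: it assembles Theorem~\ref{thm:2.2}, the linear change of basis from a general simplicial cone to $\mathbb{R}^{n+1}_+$, and Theorem~\ref{thm:simplex}. Nothing to add there.

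For the ``only if'' direction, note that the paper does not give a proof at all: it simply quotes the result of Foertsch and Karlsson \cite{FoK} and states the combined theorem. Your proposal, by contrast, tries to outline an argument. The outline is in the right spirit (Foertsch and Karlsson do work with Busemann functions), but as it stands it is not a proof, for two concrete reasons.

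First, the sentence ``in a normed space every geodesic segment extends uniquely to a bi-infinite affine geodesic line'' is false without strict convexity of the norm. In $(\mathbb{R}^2,\|\cdot\|_\infty)$, for instance, the broken path $(0,0)\to(1,0)\to(2,1)$ is a geodesic from $(0,0)$ to $(2,1)$ that is not affine and admits no affine extension. Since the target space $(W,\|\cdot\|)$ is not assumed strictly convex --- and indeed $(\mathbb{R}^n,\|\cdot\|_H)$ is not --- this step does not go through, and you cannot use it to control how chords of $\Omega$ meet $\partial\Omega$.

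Second, and more seriously, the crucial step --- ``matching the two horofunction boundaries \ldots\ should compel the cone over $\Omega$ to be simplicial'' --- is asserted rather than proved; you yourself flag it as ``the main obstacle''. This is precisely the content of \cite{FoK}, and carrying it out requires a careful analysis of which Busemann functions on $(\Omega,\delta)$ can be affine (in the normed-space sense) under an isometry, together with a counting/combinatorial argument on the faces of $\overline{\Omega}$. Without that analysis the argument is only a plan. If you want a self-contained proof rather than a citation, you will have to reproduce the Foertsch--Karlsson computation of the horofunctions of $(\Omega,\delta)$ and show that their compatibility with a normed-space horoboundary forces $\overline{\Omega}$ to have exactly $n+1$ facets.
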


\subsection{Polyhedral cones} 
 In this subsection we shall see how we can use Birkhoff's version of Hilbert's metric on polyhedral cones to show that the Hilbert geometry on the interior of a polytope with $m$ facets can be isometrically embedded into the normed space $(\mathbb{R}^{m(m-1)/2},\|\cdot\|_\infty)$, where $\|z\|_\infty =\max_i |z_i|$ is the {\em supremum norm}. It turns out the polytopal Hilbert geometries are the only ones that can be isometrically embedded into a finite-dimensional normed space, see \cite{Ber,CV, CVV,FK}. To prove these results we need to recall  some basic concepts concerning polyhedral cones. 
 
Recall that a closed cone $C$ in $\mathbb{R}^n$ is called a {\em polyhedral cone} if it is the intersection of finitely many closed half-spaces, i.e., there exist finitely many linear functionals $\phi_1,\ldots,\phi_k$ such that $C=\{x\in\mathbb{R}^n\colon \phi_i(x)\geq 0\mbox{ for all }i\}$. A subset $F$ of a polyhedral cone $C$ is called an {\em (exposed) face} if there exists a linear functional $\phi\in C^*$ such that $F =\{x\in C\colon \phi(x)=0\}$. A face $F$ of $C$ is called a {\em facet} if $\dim F =\dim C -1$.   Furthermore, it is a well-known fact from polyhedral geometry, that if $C$ is a polyhedral cone in $\mathbb{R}^n$ with $m$ facets and $C^\circ$ is nonempty, then there exist $m$ linear functional $\psi_1,\ldots,\psi_m$ such that 
\[
C = \{x\in\mathbb{R}^n\colon \psi_i(x)\geq 0\mbox{ for all }i\}.
\]
and each $\psi_i$ defines a facet of $C$. 

The {\em facet defining functionals}, $\psi_1,\ldots,\psi_m$, of a polyhedral cone $C$ correspond to  the extreme rays (1-dimensional faces) of the dual cone $C^*$ of $C$. If  we take $u\in C^\circ$, then we can normalize each facet defining functional $\psi_i$ so that 
$\psi_i(u)=1$. In that case we have  $\mathcal{E}_u=\{\psi_1,\ldots,\psi_m\}$. Now using Lemma \ref{lem:4.1} we can show the following  result regarding isometric embeddings  from \cite{N1}. 
\begin{theorem}\label{thm:polytope}
If $P$ is an $n$-dimensional polytope in $\mathbb{R}^n$ with $m$ facets, then the Hilbert geometry on $P^\circ$ can be isometrically embedded into $(\mathbb{R}^{m(m-1)/2}, \|\cdot\|_\infty)$. 
\end{theorem}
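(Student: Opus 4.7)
The plan is to reduce to Birkhoff's version $d$ on a polyhedral cone, use Lemma \ref{lem:4.1} to obtain an explicit max--min formula, and then rewrite that formula as a supremum of absolute values of pairwise differences of scalar functions.

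First, I would lift $P$ to a cone. Pick a point not in the affine span of $P$ (placing $P$ in an affine hyperplane of $\mathbb{R}^{n+1}$) and let $C \subseteq \mathbb{R}^{n+1}$ be the polyhedral cone generated by this lifted copy of $P$. Then $C$ is a closed cone with nonempty interior and exactly $m$ facets, and there is an affine hyperplane $H$ with $P^\circ$ identified with $H \cap C^\circ$. By Theorem \ref{thm:2.2}, the Hilbert cross-ratio metric $\delta$ on $P^\circ$ agrees with the restriction of Birkhoff's metric $d$ on $C$ to this cross-section, so it suffices to embed $(H \cap C^\circ, d)$ isometrically into $(\mathbb{R}^{m(m-1)/2}, \|\cdot\|_\infty)$.

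Next, let $\psi_1, \ldots, \psi_m$ be facet-defining functionals of $C$, fix $u \in C^\circ$ and normalize so that $\psi_i(u) = 1$ for all $i$. As remarked in the text, then $\mathcal{E}_u = \{\psi_1, \ldots, \psi_m\}$, so Lemma \ref{lem:4.1} yields
\[
M(x/y) = \max_{1 \le i \le m}\frac{\psi_i(x)}{\psi_i(y)}, \qquad m(x/y) = \min_{1 \le j \le m}\frac{\psi_j(x)}{\psi_j(y)}
\]
for all $x, y \in C^\circ$. Taking logarithms and using $d(x,y) = \log M(x/y) - \log m(x/y)$, I can combine the two extrema into a single maximum over pairs:
\[
d(x,y) = \max_{1 \le i,j \le m}\left(\log \frac{\psi_i(x)}{\psi_i(y)} - \log \frac{\psi_j(x)}{\psi_j(y)}\right) = \max_{1 \le i,j \le m}\left|F_{ij}(x) - F_{ij}(y)\right|,
\]
where $F_{ij}(x) := \log \psi_i(x) - \log \psi_j(x)$. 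The absolute value appears because the pair $(j,i)$ gives the negative of the expression for $(i,j)$, and the diagonal pairs contribute $0$. Thus only the $m(m-1)/2$ unordered pairs with $i < j$ matter.

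Finally, define $\Phi : P^\circ \to \mathbb{R}^{m(m-1)/2}$ by
\[
\Phi(p) = \bigl(F_{ij}(\tilde p)\bigr)_{1 \le i < j \le m},
\]
where $\tilde p$ is the lift of $p \in P^\circ$ into $H \cap C^\circ$. Then
\[
\|\Phi(p) - \Phi(q)\|_\infty = \max_{i<j}|F_{ij}(\tilde p) - F_{ij}(\tilde q)| = d(\tilde p, \tilde q) = \delta(p,q),
\]
so $\Phi$ is the desired isometric embedding. The main content of the proof is really the identity turning the quotient $M/m$ into a max of absolute pair-differences; there is no serious obstacle beyond choosing the lift correctly and invoking Lemma \ref{lem:4.1}, since everything else is algebraic manipulation of logarithms.
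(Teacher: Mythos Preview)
Your proof is correct and follows essentially the same route as the paper: lift $P$ to a polyhedral cone $C\subseteq\mathbb{R}^{n+1}$, invoke Theorem~\ref{thm:2.2} to pass from $\delta$ to $d$, apply Lemma~\ref{lem:4.1} with the facet-defining functionals $\psi_1,\ldots,\psi_m$, and then rewrite $d(x,y)$ as $\max_{i<j}|F_{ij}(x)-F_{ij}(y)|$ with $F_{ij}=\log\psi_i-\log\psi_j$. The only cosmetic difference is that the paper writes $d(x,y)=\log\bigl(M(x/y)M(y/x)\bigr)$ while you write $d(x,y)=\log M(x/y)-\log m(x/y)$; these are the same by~(\ref{eq:2.1}).
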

\begin{proof}
Let $C=\{(\lambda x,\lambda)\in\mathbb{R}^n\times\mathbb{R}\colon x\in P\mbox{ and }\lambda\geq 0\}$. Then $C$ is a polyhedral cone in $\mathbb{R}^{n+1}= \mathbb{R}^n\times \mathbb{R}$ with nonempty interior and $m$ facets. Denote the facet defining functionals of $C$ by $\psi_1,\ldots,\psi_m$. Furthermore, let $\Sigma^\circ = \{(x,s)\in C^\circ\colon s=1\}$. So, by Theorem \ref{thm:2.2} the Hilbert geometry $(P^\circ,\delta)$ is isometric to $(\Sigma^\circ,d)$.

Now define $\Psi\colon \Sigma^\circ\to \mathbb{R}^{m(m-1)/2}$ by 
\[
\Psi_{ij}(x) =\log\frac{\psi_i(x)}{\psi_j(x)}\mbox{\quad for all }1\leq i<j\leq m\mbox{ and }x\in \Sigma^\circ.
\]
It follows from Lemma \ref{lem:4.1} that 
$M(x/y) =\max_i \psi_i(x)/\psi_i(y)$ for all $x,y\in\Sigma^\circ$. 
So, 
\begin{eqnarray*}
d(x,y) & = & \log \Big{(}M(x/y)M(y/x)\Big{)}\\
     & = & \log \Big{(}\max_{i,j}\frac{\psi_i(x)\psi_j(y)}{\psi_i(y)\psi_j(x)}\Big{)}\\
     & = & \max_{i,j} \Big{(}\log\frac{\psi_i(x)}{\psi_j(x)} - \log \frac{\psi_i(y)}{\psi_j(y)}\Big{)}\\
       & = & \max_{1\leq i<j\leq m} \Big{|}\log\frac{\psi_i(x)}{\psi_j(x)} - \log \frac{\psi_i(y)}{\psi_j(y)}\Big{|},
\end{eqnarray*}
which shows that $d(x,y)=\|\Psi(x)-\Psi(y)\|_\infty$ for all $x,y\in \Sigma^\circ$. Thus, 
$\Psi$ is an isometry of $(\Sigma^\circ,d)$ into $(\mathbb{R}^{m(m-1)/2}, \|\cdot\|_\infty)$.
\end{proof}

The following result was essentially proved by Foertsch and Karlsson  \cite{FK}.
\begin{theorem}\label{thm:iso} 
A Hilbert geometry embeds isometrically into a finite-dimensional normed space if and only if its domain is the interior of a polytope. 
\end{theorem}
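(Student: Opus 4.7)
The forward direction is already available: if $\Omega$ is the interior of an $n$-dimensional polytope with $m$ facets, then Theorem \ref{thm:polytope} exhibits an explicit isometric embedding of $(\Omega,\delta)$ into $(\mathbb{R}^{m(m-1)/2},\|\cdot\|_\infty)$. So the plan concentrates on the converse, which is the content attributed to Foertsch and Karlsson. Assume that $(\Omega,\delta)$ admits an isometric embedding $\iota$ into some finite-dimensional normed space $(V,\|\cdot\|)$; the goal is to show that $\Omega$ is the relative interior of a polytope.

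The first step is to establish geodesic rigidity. In any Hilbert geometry the straight segment $[x,y]\subseteq\Omega$ is a $\delta$-geodesic, and in any normed space the affine segment $[\iota(x),\iota(y)]$ is a $\|\cdot\|$-geodesic. Using that $\iota$ is an isometry and that affine parametrization of straight segments in $\Omega$ can be read off from triples of collinear points via the cross-ratio formula \eqref{eq:1.1}, I would show that $\iota$ sends affine segments of $\Omega$ to affine segments of $V$, and hence extends to an affine map on each line meeting $\Omega$. This turns the problem from a metric one into one about affine maps.

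Next I would pass to the ideal boundary. For each $p\in\partial\Omega$ and $x\in\Omega$, let $R_{x,p}\subseteq\Omega$ be the straight Hilbert ray from $x$ toward $p$; by the previous step, $\iota(R_{x,p})$ is a genuine affine ray in $V$, with direction $d_p$ depending only on $p$ up to a positive scalar. Attached to $R_{x,p}$ is the Busemann function $b_p(z)=\lim_{t\to\infty}(\delta(z,\gamma(t))-t)$, where $\gamma$ parametrizes $R_{x,p}$ by arclength. Pulling through $\iota$, the function $b_p\circ\iota^{-1}$ is a Busemann function on $(V,\|\cdot\|)$ associated with the affine ray of direction $d_p$. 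Crucially, Busemann functions on a finite-dimensional normed space are affine-linear in the direction $d_p$, up to the choice of a dual support functional at $d_p/\|d_p\|$ on the unit sphere of $V$.

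The heart of the argument is then to translate the normed-space rigidity of $b_p$ into convex-geometric rigidity of $\partial\Omega$ at $p$. One shows that whenever $b_p$ is affine on a full affine subspace of $\iota(\Omega)$, the corresponding preimage in $\Omega$ must be a flat piece of $\partial\Omega$ containing $p$: affine subvarieties in the horofunction level sets translate, via the explicit shape of $\delta$, to faces of $\partial\Omega$ opposite to $p$. Because the unit ball of $V$ has only finitely many extreme faces (for instance, reducing to the polytopal situation by first showing the unit ball of $V$ can be taken polytopal, which is already a nontrivial lemma in Foertsch-Karlsson), one gets only finitely many affine types of Busemann functions, hence only finitely many faces of $\partial\Omega$, hence $\Omega$ is polyhedral and, being bounded open convex, a polytope. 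The main obstacle, and where the Foertsch-Karlsson argument must be invoked with care, is precisely this last translation: converting the finiteness of horofunction types on $V$ into combinatorial finiteness of $\partial\Omega$, which requires exploiting both the affine-on-segments rigidity of Step 1 and the specific product structure in the cross-ratio formula \eqref{eq:1.1}.
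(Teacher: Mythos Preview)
Your proposal has genuine gaps at two load-bearing steps, and the overall route differs from the paper's.

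\textbf{Step 1 does not go through.} You assert that the isometry $\iota$ must send affine segments of $\Omega$ to affine segments of $V$. But neither space has unique geodesics in general: in a non-strictly-convex Hilbert geometry (e.g., any polytopal one) there are non-straight geodesics, and likewise in a non-strictly-convex normed space. An isometry sends geodesics to geodesics, but there is no mechanism that forces it to single out the affine ones on either side. The appeal to ``reading off affine parametrization from the cross-ratio'' does not help, since the image geodesic need not be affine to begin with. Once this step fails, the later passage to ``$\iota(R_{x,p})$ is a genuine affine ray'' and the affine-map picture collapse.

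\textbf{The Busemann claim is false as stated.} Busemann functions on a finite-dimensional normed space are generally \emph{not} affine. In $(\mathbb{R}^2,\|\cdot\|_1)$ with ray direction $d=(1,0)$ one computes $\|x-td\|_1-t=-x_1+|x_2|$ for large $t$, so $b(x)=-x_1+|x_2|$. So even granting Step 1, the bridge from horofunctions on $V$ to flat pieces of $\partial\Omega$ is not there. You also acknowledge that reducing to a polytopal unit ball is itself a nontrivial lemma; without it the ``finitely many extreme faces'' input is unavailable.

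\textbf{What the paper does instead.} The paper argues by contradiction and never tries to make $\iota$ affine. It uses the Karlsson--Noskov bound on the Gromov product: if $x_k\to\eta$, $y_k\to\eta'$ in $\partial\Omega$ and $[\eta,\eta']\not\subset\partial\Omega$, then $(x_k\mid y_k)_p$ stays bounded. From $m$ boundary points $\eta_1,\dots,\eta_m$ with all segments $[\eta_i,\eta_j]$ meeting $\Omega$, one builds points $z_i^n$ on the rays $p\to\eta_i$ with $\delta(p,z_i^n)=n$, pushes them through $\iota$, normalizes by $1/n$, and extracts limits $v_i$ on the unit sphere of $V$ with $\|v_i-v_j\|\ge 2$. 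Finally, if $\overline{\Omega}$ is not a polytope, polar duality plus Straszewicz's theorem (exposed points are dense in the extreme points) produces infinitely many such $\eta_i$, contradicting compactness of the unit sphere. This argument is purely metric and avoids the geodesic-uniqueness and horofunction-linearity pitfalls above.
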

\begin{proof}
By the previous theorem it remains to show that a non-polytopal  Hilbert geometry cannot be isometrically embedded into any finite-dimensional normed space. 
For the sake of contradiction, assume that $(\Omega, \delta)$ is a non-polytopal Hilbert geometry and that $h$ is an isometry from $(\Omega,\delta)$ into a finite-dimensional normed space $(V,\|\cdot\|)$. 

The following result by Karlsson and Noskov \cite{KN} concerning the behavior of the Gromov product, \index{Gromov product},
\[
2(x\mid y)_p = \delta(x,p) +\delta(y,p) - \delta(x,y),
\]
 will be useful. 
Suppose that $p\in\Omega$ and $(x_k)$ and $(y_k)$ are sequences in $\Omega$ such that $x_k\to x\in\partial \Omega$, $y_k\to y\in\partial\Omega$, and the straight line segment $[x,y]\not\subset\partial \Omega$, then there exists a constant $R>0$ such that  
\[
\limsup_{k\to\infty}\, (x_k\mid y_k)_p\leq R.
\]
This result will be used to prove the following claim, from which the  contradiction will be derived. 

\noindent{\em Claim.} If there exist $\eta_1,\ldots,\eta_m\in\partial \Omega$ such that the straight line segment $[\eta_i,\eta_j]\not\subset\partial\Omega$ for all $i\neq j$, then there exist $v_1,\ldots,v_m\in V$ with $\|v_i\|=1$ for all $i$ and $\|v_i-v_j\|\geq 2$ for all $i\neq j$. 

To prove the claim let $p\in\Omega$ be fixed. Obviously the mapping $x\mapsto h(x) -h(p)$ is also an isometric embedding of $(\Omega,\delta)$ into $(V,\|\cdot\|)$. So, we may as well assume that $h(p)=0$. Now for $i=1,\ldots,m$ and $0\leq t< 1$ let 
$z_i(t)= (1-t)p +t\eta_i$. Note that the mapping $t\mapsto \delta(p,z_i(t))$ is continuous and $\lim_{t\to 1^-} \delta(p,z_i(t))=\infty$.   So, for each $i$ and $n\in\mathbb{N}$ there exists $0<t_{i,n}<1$ such that $\delta(p,z_i(t_{i,n})) =n$. For simplicity we write $z_i^n = z_i(t_{i,n})$. 

From the result by Karlsson and Noskov it follows that there exists a constant $M>0$ such that $\delta(z_i^n,z_j^n)\geq 2n -M$ for all $i\neq j $ and $n\geq N_0$, where $N_0$ is a sufficiently large integer. 
Define $u_i^n = \frac{1}{n}h(z_i^n)$. Then for each $i\neq j$ and $n\geq N_0$ we have 
\[
\|u_i^n-u_j^n\|= \frac{1}{n}\delta(z_i^n,z_j^n) \geq \frac{1}{n}(2n-M)= 2-\frac{M}{n}.
\] 
But also $\|u_i^n\| = \frac{1}{n}\|h(z_i^n)-h(p)\| = \frac{1}{n}\delta(p,z^n_i)=1$. 
As the unit sphere in $V$ is compact, we can find a subsequence $(u^{n_k}_i)$ converging to  some $v_i\in V$ for each $i$. Clearly the limits $v_1,\ldots,v_m$ satisfy $\|v_i\|=1$ for all $i$, and $\|v_i-v_j\|\geq 2$ for all $i\neq j$, which completes the proof of the claim.

To obtain a contradiction we will now show that there exist infinitely many points $\eta_1,\eta_2,\ldots\in\partial\Omega$ such that $[\eta_i,\eta_j]\not\subset\partial\Omega$ for all $i\neq j$, which, by the claim, violates the compactness of the unit sphere in $V$.  

Without loss of generality we may assume that $\Omega$ is contained in a finite-dimensional vector space $W$ and $0\in\Omega$. As the closure, 
$\overline{\Omega}$, of $\Omega$ is not a polytope, the polar of 
$\overline{\Omega}$, 
\[
\overline{\Omega}^* =\{\phi\in W^*\colon \phi(x)\leq 1\mbox{ for all }x\in \overline{\Omega}\},
\]
is also not a polytope. Thus, $\overline{\Omega}^*$ has infinitely many extreme points. As the exposed points are dense in the extreme points, see \cite{Stras}, there exist infinitely many exposed points $\xi_1,\xi_2,\ldots\in\partial \overline{\Omega}^*$.  Now using the fact that $\overline{\Omega}^{**} =\overline{\Omega}$  we can find $\eta_1,\eta_2,\ldots\in\partial \Omega$ such that  $\xi_i(\eta_i)=1$ for all $i$, and $\phi(\eta_i)<1$ for all $\phi \in \overline{\Omega}^*\setminus\{\xi_i\}$. Clearly, if $i\neq j$ and $0<t<1$, then 
$(1-t)\eta_i+ t\eta_j\in\Omega$, as $\phi((1-t)\eta_i+ t\eta_j)= (1-t)\phi(\eta_i)+t\phi(\eta_j)<1$ for all $\phi\in \overline{\Omega}^*$ and $0<t<1$. Thus, the segment $[\eta_i,\eta_j]\not\subset\partial\Omega$ for all $i\neq j$, which completes the proof.
\end{proof}
Theorem \ref{thm:iso} has been strengthened by Colbois and Verovic \cite{CV} to quasi-isometries, and by Bernig  \cite{Ber} and Colbois, Vernicos and Verovic \cite{CVV} to bi-lipschitz mappings.

 \subsection{Symmetric cones} 

Let $C^\circ$ be a symmetric cone in $(V,\langle \cdot\mid\cdot\rangle)$, and let $e\in C^\circ$ denote the unit in the associated Euclidean Jordan algebra on $V$. 
 An element $c\in V$ is called an {\em idempotent} \index{idempotents} if $c^2=c$. It is said to be a {\em primitive idempotent} \index{idempotents!primitive} if $c$ cannot be written as the sum of two non-zero idempotents. The set of all primitive idempotents in $V$ is denoted by $\mathcal{J}(V)$. 
 A set $\{c_1,\ldots,c_k\}$ is called a {\em complete system of orthogonal idempotents} \index{idempotents!complete system of orthogonal} if 
 \begin{enumerate}[(1)]
\item  $c_i^2= c_i$  for all $i$, 
\item $c_ic_j =0$ for all $i\neq j$, 
\item $c_1+\cdots+ c_k =e$. 
\end{enumerate}

The spectral theorem \cite[Theorem III.1.1]{FK} says that for each $x\in V$ there exist unique real numbers $\lambda_1, \ldots,\lambda_k$, all distinct, and a complete system of orthogonal idempotents $c_1,\ldots,c_k$ such that 
\[
x = \lambda_1 c_1+\cdots +\lambda_k c_k. 
\]
The numbers $\lambda_i$ are called the {\em eigenvalues} \index{eigenvalues} of $x$. The {\em spectrum} \index{spectrum} of $x$ is denoted  by 
\[
\sigma(x) =\{\lambda \colon \lambda \mbox{ eigenvalue of } x\}, 
\] 
and we write 
\[\lambda_+(x)=\max\{\lambda\colon\lambda\in\sigma(x)\} \mbox{\quad and\quad } 
\lambda_-(x)=\min\{\lambda\colon\lambda\in\sigma(x)\}.
\]

It can be shown, see \cite[Theorem III.2.1]{FK}, that $x\in C^\circ$ if and only if $\sigma(x)\subseteq (0,\infty)$. So, one can use the spectral decomposition,  $x =\lambda_1 c_1+\cdots +\lambda_kc_k$, of $x\in C^\circ$, to define the unique square root of $x$ by 
\[
x^{1/2} = \sqrt{\lambda_1}c_1+\cdots+\sqrt{\lambda_k}c_k.
\]
Similarly, functions $x\mapsto \log x$ and $x\mapsto x^t$ for $t\in\mathbb{R}$, can be defined on $C^\circ$. 
  
For $x\in V$   the linear mapping,
\[
P(x) = 2L(x)^2 -L(x^2),
\]
is called the {\em quadratic representation} \index{quadratic representation}  of $x$. (Recall that $L(x)\colon V\to V$ is the linear map given by $L(x)y =xy$.) Note that $P(x^{-1/2})x=e$ for all $x\in C^\circ$. It can also be shown that $P(x^{-1}) = P(x)^{-1}$ for all $x\in C^\circ$. In the example of  the Euclidean Jordan algebra on $\mathrm{Sym}_n$ the reader can verify that $P(A)B = ABA$. It is known, see \cite[Proposition III.2.2]{FK}, that if $x\in C^\circ$, then $P(x)\in\mathrm{Aut}(C)$, and hence 
\begin{equation}\label{eq:iso}
M(P(x) w/P(x)y) = M(w/y)\mbox{\quad and } m(P(x)w/P(x)y)=m(w/y)
\end{equation}
for all $w\in V$ and $x,y\in C^\circ$.
For $w\in V$ and $x\in C^\circ$ we write 
\[
\lambda_+(w,x) =\lambda_+(P(x^{-1/2}) w)\mbox{\quad and\quad }
\lambda_-(w,x) =\lambda_-(P(x^{-1/2}) w).
\]
The following formula for Hilbert's metric on symmetric cones was derived by Koufany \cite{Kou}.  
\begin{theorem}\label{thm:4.2}
If $V$ is a Euclidean Jordan algebra with symmetric cone $C^\circ$, then for 
$w\in V$ and $x\in C^\circ$,
\[
M(w/x) = \lambda_+(w,x)\mbox{\quad and\quad }
m(w/x) = \lambda_-(w,x).
\]
In particular, we have  for $x,y\in C^\circ$,
\[
d(x,y) = \log\Big{(} \frac{\lambda_+(x,y)}{\lambda_-(x,y)}\Big{)}.
\]
\end{theorem}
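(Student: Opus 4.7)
The plan is to reduce the general case to the base case $x=e$ using the isometric action of the quadratic representation, and then invoke the spectral theorem to compute $M(\cdot/e)$ and $m(\cdot/e)$ directly.

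First I would record the Jordan-algebraic identity $P(x^{1/2})e = 2L(x^{1/2})^2 e - L(x)e = 2x - x = x$. Since $P(x^{-1}) = P(x)^{-1}$ (this is stated in the text), applying $P(x^{-1/2})$ to both sides gives $P(x^{-1/2})x = e$. Combined with equation (\ref{eq:iso}), which says that $P(x^{-1/2}) \in \mathrm{Aut}(C)$ preserves both $M$ and $m$, we obtain
\[
M(w/x) = M\bigl(P(x^{-1/2})w \,/\, P(x^{-1/2})x\bigr) = M\bigl(P(x^{-1/2})w \,/\, e\bigr),
\]
and the analogous identity for $m$. Thus it suffices to prove that, for every $y \in V$,
\[
M(y/e) = \lambda_+(y) \quad \text{and} \quad m(y/e) = \lambda_-(y).
\]

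For these reduced identities, I would apply the spectral theorem to write $y = \sum_{i=1}^{k} \lambda_i c_i$ with $\{c_1,\ldots,c_k\}$ a complete system of orthogonal idempotents. Then for any $\beta \in \mathbb{R}$, the element
\[
\beta e - y = \sum_{i=1}^{k} (\beta - \lambda_i) c_i
\]
is itself in spectral form, so its spectrum is $\{\beta - \lambda_i : 1 \le i \le k\}$. The characterization $z \in C \iff \sigma(z) \subseteq [0,\infty)$ (from \cite[Theorem III.2.1]{FK}, together with closedness of $C$) then gives $y \leq_C \beta e$ if and only if $\beta \geq \lambda_+(y)$. Taking the infimum over such $\beta$ yields $M(y/e) = \lambda_+(y)$, and the symmetric argument applied to $y - \alpha e = \sum_i(\lambda_i - \alpha)c_i$ yields $m(y/e) = \lambda_-(y)$. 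The distance formula then follows immediately from $d(x,y) = \log(M(x/y)/m(x/y))$.

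The main obstacle is essentially bookkeeping rather than conceptual: one must verify the identity $P(x^{-1/2})x = e$ to make the reduction to $e$ legitimate, and one must apply the spectral characterization of $C$ carefully, noting that the infimum defining $M(y/e)$ and the supremum defining $m(y/e)$ are attained because $C$ is closed. Once these two steps are in place the theorem follows without further computation, since all the heavy lifting has been done by the isometric property of $P(x^{-1/2})$ and the spectral theorem for Euclidean Jordan algebras.
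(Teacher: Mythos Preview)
Your proposal is correct and follows essentially the same approach as the paper: both arguments use that $P(x^{-1/2})\in\mathrm{Aut}(C)$ sends $x$ to $e$, thereby reducing the computation of $M(w/x)$ to $M(z/e)$ for $z=P(x^{-1/2})w$, and then read off the answer from the spectral decomposition $z=\sum_i\lambda_i c_i$ together with the characterization of $C$ in terms of nonnegative spectrum. The only cosmetic difference is that the paper invokes the order-preserving property of $P(x^{-1/2})$ directly (so $w\leq_C\beta x\iff z\leq_C\beta e$), whereas you quote the equivalent equation~(\ref{eq:iso}) for $M$ and $m$; and you spell out the verification of $P(x^{-1/2})x=e$, which the paper simply cites from the preceding paragraph.
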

 \begin{proof}
 Let $z = P(x^{-1/2})w\in V$ and let $z=\lambda_1 c_1+\cdots +\lambda_kc_k$ be the spectral decomposition of $z$. Note that $w\leq_C \beta x$ is equivalent to $z=P(x^{-1/2})w\leq \beta P(x^{-1/2})x=\beta e$, since $P(x^{-1/2})\in\mathrm{Aut}(C)$. As $e=c_1+\cdots +c_k$, this inequality holds if and only if 
 $0\leq_C (\beta -\lambda_1)c_1+\cdots +(\beta -\lambda_k)c_k$, which is equivalent to  $\beta\geq \lambda_+(z)=\lambda_+(w,x)$. As $M(w/x)=\inf\{\beta\in\mathbb{R}\colon w\leq_C \beta x\}$, we deduce that $M(w/x) = \lambda_+(w,x)$. In the same way it can be shown that $m(w/x) =\lambda_-(w,x)$. 
 \end{proof}
In the example of the Euclidean Jordan algebra on $\mathrm{Sym}_n$ we find for 
$A,B\in\Pi_n(\mathbb{R})$ that 
\begin{eqnarray*}
\lambda_+(A,B) & = & \lambda_+(P(B^{-1/2})A)\\
 &  = & \max\{\lambda\colon \lambda\in\sigma(B^{-1/2}AB^{-1/2})\} \\
 & = & \max\{\lambda\colon \lambda\in\sigma(B^{-1}A)\}
\end{eqnarray*}
and 
\[
\lambda_-(A,B) =\min\{\lambda\colon \lambda\in\sigma(B^{-1}A)\}.
\]

\begin{remark}
If we combine Lemma \ref{lem:4.1} and Theorem \ref{thm:4.2} we find for $x\in V$ and $y$ in a symmetric cone $C^\circ$ that
\[
\lambda_+(x,y)= \max_{c\in \mathcal{E}_e}\frac{\langle x\mid c\rangle}{\langle y\mid c\rangle}
\mbox{\quad and \quad}
\lambda_-(x,y)= \min_{c\in \mathcal{E}_e}\frac{\langle x\mid c\rangle}{\langle y\mid c\rangle},
\] 
where $\mathcal{E}_e$ is the set of extreme points of $\Sigma_e=\{x\in C\colon \langle x\mid e\rangle =1\}$. It is known, see \cite[Proposition IV.3.2]{FK}, that 
\[
\mathcal{E}_e =\{x\in \Sigma_e\colon x\mbox{ is a primitive idempotent}\}.
\] 
So, 
\[
\lambda_+(x,y)= \max_{c\in \Sigma_e\cap\mathcal{J}(V)}\frac{\langle x\mid c\rangle}{\langle y\mid c\rangle}
\mbox{\quad and \quad}
\lambda_-(x,y)= \min_{c\in \Sigma_e\cap\mathcal{J}(V)}\frac{\langle x\mid c\rangle}{\langle y\mid c\rangle}.
\] 
These equalities are closely related to the min-max characterization of the eigenvalues of the elements of a Euclidean Jordan algebra by  Hirzebruch \cite{Hir}. 
\end{remark}

\section{Non-expansive mappings on Hilbert geometries} 
Many interesting examples of non-expansive mappings on Hilbert geometries arise as normalizations of  order-preserving, homogeneous mappings on cones.   For example, Bellman operators in Markov decision processes and Shapley operators in stochastic games are order-preserving and homogeneous mappings on $\mathbb{R}^n_+$  after a change of variables, see \cite{Bell,BeKo,Ney}.  These mappings, $f\colon \mathbb{R}^n_+\to\mathbb{R}^n_+$,  are of the form:
\[
f_i(x) = \inf_{\alpha\in A_i}\sup_{\beta\in B_i} r_i(\alpha,\beta)\Big{(}\prod_{j=1}^n x_j^{p_j(\alpha,\beta)}\Big{)},
\] 
for $1\leq i\leq n$ and $x\in\mathbb{R}^n_+$. Here $r_i(\alpha,\beta)\geq 0$, $\sum_j p_j(\alpha,\beta) =1$ and $0\leq p_j(\alpha,\beta)\leq 1$ for all $\alpha$, $\beta$, $i$, 
and $j$. The iterates of these operators are used to  compute the value of Markov decision processes and stochastic games. Other interesting examples of nonlinear order-preserving mappings on cones are so-called decimation-reproduction operators in the analysis of fractal diffusions \cite{LiN,Metz}, and $DAD$-operators in matrix scaling problems, see \cite{Men,N8}. 

In many applications it is important to understand the iterative behavior of such mappings, $f\colon C^\circ\to C^\circ$ and of the normalized mappings, $g\colon\Sigma_\phi^\circ\to\Sigma_\phi^\circ$, given by 
\[
g(x) = \frac{f(x)}{\phi(f(x))}\mbox{\quad for }x\in\Sigma_\phi^\circ=\{x\in C^\circ\colon \phi(x)=1\},
\] 
where $\phi\in (C^*)^\circ$.  The fact that these mappings are non-expansive with 
respect to Hilbert's metric is a very useful tool to analyze their dynamics. 

In the analysis of the dynamics of a non-expansive mapping $f$ on a Hilbert geometry $(\Omega,\delta)$ it is important to distinguish two cases: (1) $f$ has a fixed point in $\Omega$, and (2) $f$ does not have a fixed point in $\Omega$. In the first case, the limit points of each orbit of $f$ lie inside $\Omega$, whereas in the second case all the limit points of each orbit of $f$ lie inside $\partial\Omega$ by a result of Ca\l ka \cite{Cal}. In the next subsection we will consider the first case.

\subsection{Periodic orbits}
Before we get started we recall some basic notions from the theory of dynamical systems. 
A point $w\in\Omega$ is called a {\em periodic point} \index{periodic point} of $f\colon\Omega\to\Omega$ if the exists an integer $p\geq 1$ such that $f^p(w)=w$. The smallest such $p\geq 1$ is called the {\em period} \index{period} of $w$. In particular, $w\in\Omega$ is a {\em fixed point} \index{fixed point} if $f(w)=w$. The {\em orbit} of $x\in\Omega$ is given by $\mathcal{O}(x) =\{f^k(x)\colon k=0,1,2,\ldots\}$. We say that the orbit of $x\in\Omega$ {\em converges to a periodic orbit} if there exists a periodic point $w$ of $f$ with period $p$ such that $\lim_{k\to\infty} f^{kp}(x) =w$.  For $x\in\Omega$ we define the {\em $\omega$-limit set} \index{limit set} by 
\[
\omega(x;f)=\{y\in\overline{\Omega}\colon f^{k_i}(x)\to y\mbox{ for some subsequence } k_i\to\infty\}.
\]
Note that we allow the limit point $y$ to be in $\partial \Omega$ even though $f$ need not be defined there. 

The following result will play an important role. 
\begin{theorem}\label{thm:sup}
If $f\colon X\to X$ is a non-expansive mapping on a closed subset $X$ of $(\mathbb{R}^n,\|\cdot\|_\infty)$ and $f$ has a fixed point in $X$, then every orbit of $f$ converges to a periodic orbit whose period does not exceed $\max_k 2^k{n\choose k}$. 
\end{theorem}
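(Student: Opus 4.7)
The plan is to reduce the dynamics to that of a bijective isometry on a compact set, then exploit the specific combinatorial structure of the $\ell_\infty$ unit ball via a Sperner-type antichain bound.

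\textbf{Stage 1 (reduction to an isometry on a compact set).} Because $f$ fixes some $p \in X$, non-expansiveness yields $\|f^k(x) - p\|_\infty \leq \|x - p\|_\infty$, so every orbit is bounded, whence the $\omega$-limit set $W = \omega(x; f)$ is nonempty and compact. A standard argument using the monotonicity of $k \mapsto \|f^k(u) - f^k(v)\|_\infty$ for $u, v \in W$ together with a diagonal subsequence shows that $f(W) = W$ and that $f|_W$ is a bijective isometry. Set $T = f|_W$. I aim to prove: every $T$-orbit is finite of cardinality at most $N := \max_k 2^k \binom{n}{k}$. Granting this, every $y \in W$ satisfies $T^{N!} y = y$; for any subsequential limit $z$ of $\{f^{kN!}(x)\}_{k \geq 0}$ (which exists by compactness and lies in $W$), the non-increasing sequence $k \mapsto \|f^{kN!}(x) - z\|_\infty$ must tend to zero along that subsequence, hence in fact tends to zero, giving $f^{kN!}(x) \to z$ and convergence to a periodic orbit of period at most $N$.

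\textbf{Stage 2 (combinatorial encoding).} For distinct $u, v \in \mathbb{R}^n$, define the signed extremal set
\begin{equation*}
E(u, v) = \bigl\{ (i, \varepsilon) \in \{1, \ldots, n\} \times \{-1, +1\} : \varepsilon (u_i - v_i) = \|u - v\|_\infty \bigr\},
\end{equation*}
a nonempty \emph{signed subset} of $\{1, \ldots, n\}$ in which no index can appear with both signs. Fix $y_0 \in W$, set $y_j = T^j y_0$, and consider the family
\begin{equation*}
\mathcal{F} = \{ E(y_j, y_0) : j \geq 1,\ y_j \neq y_0 \}.
\end{equation*}
The key claim is that $\mathcal{F}$ is an antichain under the natural inclusion order on signed subsets: no element of $\mathcal{F}$ properly contains another, and the map $y_j \mapsto E(y_j, y_0)$ is injective. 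Granting this, a Sperner-type theorem for signed subsets bounds the size of any such antichain by $\max_k 2^k \binom{n}{k}$, so the orbit of $y_0$ has at most $N$ points.

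\textbf{Main obstacle.} The heart of the argument is the antichain claim. The proof I envision goes by contradiction: if $E(y_j, y_0) \subsetneq E(y_k, y_0)$ for some $0 < j < k$, one transports the containment by the isometry $T^{-j}$, relating $E(y_{k-j}, y_0)$ to $E(y_k, y_j)$, and then combines this with the equality case of the $\ell_\infty$ triangle inequality
\begin{equation*}
\|y_k - y_0\|_\infty \leq \|y_k - y_j\|_\infty + \|y_j - y_0\|_\infty
\end{equation*}
read off on a coordinate in the strict containment to produce either an equality that cannot hold or a strict inequality that contradicts the preservation of norms by $T$. Keeping the signs consistent across this surgery, and ensuring that the resulting family genuinely achieves the sharper bound $\max_k 2^k \binom{n}{k}$ (rather than the naive total count $3^n - 1$ of all signed subsets), is where the precise geometry of the $\ell_\infty$ norm does the real work, and is the principal technical hurdle.
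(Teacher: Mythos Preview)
The paper does not prove this theorem: immediately after stating it, the authors write that a proof can be found in \cite[Chapter~4]{LNBook} and that the bound is due to Lemmens and Scheutzow \cite{LS2}. So there is no in-paper argument to compare against; your proposal can only be measured against the literature it cites.

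Your outline does follow the broad strategy of the Lemmens--Scheutzow proof: reduce to a bijective $\ell_\infty$-isometry $T$ on the compact $\omega$-limit set, encode orbit points via signed extremal coordinate sets, and bound the orbit length by a Sperner-type antichain estimate for signed subsets, which yields exactly $\max_k 2^k\binom{n}{k}$. Stage~1 is standard and correct as stated.

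The gap is in Stage~2. You propose to ``transport the containment by the isometry $T^{-j}$, relating $E(y_{k-j}, y_0)$ to $E(y_k, y_j)$.'' But an $\ell_\infty$-isometry preserves distances, not signed extremal sets: there is no reason for $E(T^{-j}u, T^{-j}v)$ to equal, contain, or be contained in $E(u,v)$ (coordinate permutations already show this, and in any case $T$ is only an isometry of the induced metric on $W$, not an affine map of $\mathbb{R}^n$). What the isometry gives you is the shift-invariance of distances, $\|y_{k-j}-y_0\|_\infty = \|y_k-y_j\|_\infty$, and the actual argument exploits this together with the coordinatewise equality conditions in the $\ell_\infty$ triangle inequality for the \emph{triple} $y_0, y_j, y_k$ directly, without any transport of the sets $E(\cdot,\cdot)$ by $T$. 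You also leave the injectivity of $y_j\mapsto E(y_j,y_0)$ unaddressed; this is not automatic and is part of the same combinatorial core. Your own description of this step as the ``principal technical hurdle'' is accurate: as written, the mechanism you sketch for clearing it does not work, and the proposal is an outline rather than a proof.
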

A proof and a discussion of the history of this result can be found in \cite[Chapter 4]{LNBook}. 
The upper bound given in Theorem \ref{thm:sup} is currently the strongest known and was obtained by Lemmens and Scheutzow in \cite{LS2}. 
It was conjectured by Nussbaum in \cite{N1} that the optimal upper bound is $2^n$, but at present this has been confirmed for $n=1,2$ and $3$ only, see \cite{LyN}. 

Combining Theorem \ref{thm:sup} with the isometric embedding result in Theorem \ref{thm:polytope} we obtain the following corollary for non-expansive mappings on polytopal Hilbert geometries. 
\begin{corollary}\label{cor:5.2}
If $(\Omega,\delta)$ is a polytopal Hilbert geometry with $m$ facets and $f\colon D\to D$ is a non-expansive mapping on a closed subset $D$ of $(\Omega,\delta)$ with a fixed point, then each orbit of $f$ converges to a periodic orbit whose period does not exceed $\max_k 2^k {N\choose k}$, where $N=m(m-1)/2$.   
\end{corollary}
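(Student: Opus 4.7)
The plan is to transport the problem, via the isometric embedding of Theorem \ref{thm:polytope}, to one about a non-expansive self-map of a closed subset of $(\mathbb{R}^N,\|\cdot\|_\infty)$ and then invoke Theorem \ref{thm:sup}.

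First I would let $\Psi\colon (\Omega,\delta)\to (\mathbb{R}^N,\|\cdot\|_\infty)$ be the isometric embedding provided by Theorem \ref{thm:polytope}, where $N=m(m-1)/2$. Set $\widetilde D=\Psi(D)$ and $\widetilde f=\Psi\circ f\circ\Psi^{-1}$, and write $p\in D$ for a fixed point of $f$. Because $\Psi$ is an isometry, $\widetilde f\colon \widetilde D\to\widetilde D$ is non-expansive with respect to $\|\cdot\|_\infty$ and has fixed point $\Psi(p)$.

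Next, I would check that $\widetilde D$ is closed in $(\mathbb{R}^N,\|\cdot\|_\infty)$, since this is required to apply Theorem \ref{thm:sup}. The Hilbert metric $\delta$ on a bounded open convex set is a classical example of a complete metric (the topology it induces agrees with the Euclidean topology on $\Omega$, and Cauchy sequences stay bounded away from $\partial\Omega$); thus $(\Omega,\delta)$ is complete. Since $D$ is closed in $(\Omega,\delta)$, it is complete as a metric space, and therefore its isometric image $\widetilde D$ is complete as a metric subspace of $(\mathbb{R}^N,\|\cdot\|_\infty)$, hence closed.

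Having verified the hypotheses, I would apply Theorem \ref{thm:sup} directly to $\widetilde f\colon \widetilde D\to \widetilde D$ to conclude that every orbit of $\widetilde f$ converges to a periodic orbit of $\widetilde f$ with period at most $\max_k 2^k\binom{N}{k}$. Transporting this conclusion back through $\Psi^{-1}$ (which is a continuous bijection onto $\widetilde D$, in fact an isometry onto its image), the orbit of every $x\in D$ under $f$ converges to a periodic orbit of the same period bound, completing the proof.

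The main obstacle is the closedness of $\widetilde D$ in $\mathbb{R}^N$; the isometric embedding $\Psi$ need not carry $\Omega$ to a closed subset of $\mathbb{R}^N$, so one must separately invoke the completeness of $(\Omega,\delta)$ to rule out that orbits of $\widetilde f$ accumulate at points of $\overline{\widetilde D}\setminus\widetilde D$. Once this topological point is settled, the corollary follows immediately from the two named ingredients.
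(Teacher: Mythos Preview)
Your proposal is correct and follows exactly the approach the paper intends: the corollary is stated in the paper as an immediate consequence of combining Theorem~\ref{thm:polytope} with Theorem~\ref{thm:sup}, and you have simply spelled out the conjugation by the isometry $\Psi$ and supplied the one nontrivial verification (closedness of $\Psi(D)$ via completeness of $(\Omega,\delta)$) that the paper leaves implicit.
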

It is an interesting open problem to find the optimal upper bound for the possible periods of periodic points of non-expansive mappings on $(\Omega,\delta)$ in case $\Omega$ is the interior of an $n$-dimensional simplex. For the $2$-simplex, it was shown in \cite{Le} that $6$ is the optimal upper bound. It is believed that there exists a constant $c>2$ such that the periods do not exceed $c^n$ if $\Omega$ is an $n$-simplex, but this appears 
to be hard to prove. 

\begin{remark}  Corollary \ref{cor:5.2} has the following interesting geometric consequence: It is impossible to isometrically embed a Euclidean plane into any polytopal Hilbert geometry, as it is impossible to isometrically embed a rotation under irrational angle in such Hilbert geometries.
Thus, the Euclidean rank of a poytopal Hilbert geometry is $1$.  This observation  complements results by Bletz-Siebert and Foertsch \cite{BF}, who conjectured that the Euclidean rank of any Hilbert geometry is 1. 
\end{remark}

The following, more detailed, results exist for the possible periods of periodic points for order-preserving homogeneous mappings on polyhedral cones, see \cite{AGLN,LS2,LSp} and \cite[Chapter 8]{LNBook}. 
\begin{theorem}\label{thm:5.3}
If $f\colon C\to C$ is a continuous, order-preserving, homogeneous mapping on a polyhedral cone $C$ with nonempty interior and $m$ facets, then the following assertions hold: 
\begin{enumerate}[1.]
\item Every norm bounded orbit of $f$ converges to a periodic orbit whose period does not exceed 
\[
\frac{m!}{\lfloor\frac{m}{3}\rfloor! \lfloor\frac{m+1}{3}\rfloor! \lfloor\frac{m+2}{3}\rfloor!},
\]
where $\lfloor r\rfloor$ denoted the greatest integer $q\leq r$.
\item In case $C$ is an $n$-dimensional simplicial cone the set of possible periods of periodic points of $f$ is precisely the set of integers $p$ for which there exist integers $q_1$ and $q_2$ such that $p=q_1q_2$, $1\leq q_1\leq {k\choose\lfloor k/2\rfloor}$, and $1\leq q_2\leq {n\choose k}$ for some $0\leq k\leq n$.  
\item If $f(v)=\lambda v$ for some $\lambda>0$ and $v\in C^\circ$, then each orbit of the normalized mapping $g\colon \Sigma_\phi^\circ\to\Sigma_\phi^\circ$ given by 
\[
g(x) =\frac{f(x)}{\phi(f(x))}\mbox{\quad for }x\in\Sigma_\phi^\circ
\]
converges to a periodic orbit whose period does not exceed ${m\choose \lfloor m/2\rfloor}$. Moreover, the upper bound is sharp in case $C$ is a simplicial cone. 
\end{enumerate}
\end{theorem}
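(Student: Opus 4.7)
The plan is to transport the dynamics of $f$ into logarithmic coordinates coming from the facet-defining functionals of $C$ and then invoke the sharp period bounds for order-preserving non-expansive maps on $(\mathbb{R}^m,\|\cdot\|_\infty)$ and its projective quotient. Let $\psi_1,\dots,\psi_m$ be the facet-defining functionals of $C$, normalized so that $\psi_i(u)=1$ for some fixed $u\in C^\circ$, so that $\mathcal{E}_u=\{\psi_1,\dots,\psi_m\}$ by the discussion preceding Theorem \ref{thm:polytope}. Define
\[
\mathrm{Log}\Psi\colon C^\circ\to\mathbb{R}^m,\qquad \mathrm{Log}\Psi(x)=(\log\psi_1(x),\dots,\log\psi_m(x)).
\]
Lemma \ref{lem:4.1} gives $M(x/y)=\max_i\psi_i(x)/\psi_i(y)$ and $m(x/y)=\min_i\psi_i(x)/\psi_i(y)$, so $\mathrm{Log}\Psi$ is a simultaneous isometry from $(C^\circ,d_T)$ into $(\mathbb{R}^m,\|\cdot\|_\infty)$ and from $(C^\circ,d)$ into $(\mathbb{R}^m,\|\cdot\|_{\mathrm{var}})$. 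Moreover, if we put $F=\mathrm{Log}\Psi\circ f\circ(\mathrm{Log}\Psi)^{-1}$ on the image, then $F$ is coordinatewise non-decreasing and $\|\cdot\|_\infty$-non-expansive, because $f$ is order-preserving, homogeneous, and hence non-expansive in both $d$ and $d_T$ (Corollary \ref{cor:3.2} and the Remark following).

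For assertion (1), fix $x\in C$ with norm-bounded orbit $\{f^k(x)\}$. By continuity and homogeneity $f(0)=0$, so if $f^N(x)=0$ for some $N$ the orbit is eventually constant; otherwise, using that $C$ is finite-dimensional (hence normal) and that $f$ is order-preserving and homogeneous, the orbit ultimately lies in some part $P$ of $C$, which is the relative interior of a face and hence the interior of a polyhedral cone with at most $m$ facets. Replacing $C$ by this face and passing through $\mathrm{Log}\Psi$, the restricted dynamics is conjugate to a coordinatewise monotone, $\|\cdot\|_\infty$-non-expansive self-map of a subset of $\mathbb{R}^{m'}$, $m'\le m$. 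The sharp period bound for such monotone $\ell_\infty$-non-expansive maps, proved by Lemmens-Scheutzow (cited here as \cite{LS2} and in \cite[Chapter 8]{LNBook}), is precisely the central trinomial coefficient $m!/(\lfloor m/3\rfloor!\,\lfloor (m+1)/3\rfloor!\,\lfloor (m+2)/3\rfloor!)$.

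For assertion (3), the eigenvector hypothesis gives $g(v/\phi(v))=v/\phi(v)$, so $g$ is a $d$-non-expansive self-map of $\Sigma_\phi^\circ$ with a fixed point. Composing $\mathrm{Log}\Psi$ with the quotient projection $\mathbb{R}^m\to\mathbb{R}^m/\mathbb{R}(1,\dots,1)$ turns this into a coordinate-monotone non-expansive self-map of $(\mathbb{R}^m/\mathbb{R}(1,\dots,1),\|\cdot\|_{\mathrm{var}})$ fixing a point; the corresponding sharp period bound in the projective setting is $\binom{m}{\lfloor m/2\rfloor}$, obtained by symmetrizing the arguments used for part (1). Sharpness for simplicial $C$ is established by explicit constructions with min/max operators permuting coordinates, which give the full combinatorial list claimed in (2): the attainable periods are exactly $q_1q_2$ with $q_1\le\binom{k}{\lfloor k/2\rfloor}$ and $q_2\le\binom{n}{k}$ for some $0\le k\le n$, since the $q_1$-factor records the period inside a projective orbit (part (3)) and the $q_2$-factor records how many projective orbits are cyclically permuted by $f$ along the eigendirections.

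The main obstacle is assertion (1): controlling orbits whose iterates leave the starting part and enter a smaller face of $C$, since $d$ and $d_T$ assign $\infty$ to pairs in different parts and $\mathrm{Log}\Psi$ degenerates as some $\psi_i(f^k(x))\to 0$. The resolution is to work with the lifted monotone $\ell_\infty$-non-expansive dynamics on $\mathbb{R}^m$ extended by $-\infty$ on the relevant coordinates, or, equivalently, to use the retraction arguments of \cite[Chapter 8]{LNBook} to show that any norm-bounded orbit is eventually trapped in a fixed part to which Thompson-metric compactness and the monotone $\ell_\infty$ bound apply. Once that localisation is in place, parts (2) and (3) follow from the same lifting via $\mathrm{Log}\Psi$, combined with the known sharp combinatorial enumerations of periods for monotone non-expansive self-maps of $(\mathbb{R}^n,\|\cdot\|_\infty)$ and of $(\mathbb{R}^m/\mathbb{R}(1,\dots,1),\|\cdot\|_{\mathrm{var}})$ cited in \cite{AGLN,LS2,LSp}.
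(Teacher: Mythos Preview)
The paper does not prove Theorem~\ref{thm:5.3}; it is stated as a survey result with references to \cite{AGLN,LS2,LSp} and \cite[Chapter~8]{LNBook}, and no argument is given in the text. So there is no paper proof to compare your proposal against.

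Your sketch points in the right direction and cites the same sources, but it is an outline rather than a proof, and a couple of the steps you gloss over are exactly where the work lies. First, for a non-simplicial polyhedral cone the map $\mathrm{Log}\Psi$ is an embedding of $C^\circ$ into a proper subset $X\subset\mathbb{R}^m$, so your conjugate map $F$ is only defined on $X$; the trinomial period bound of \cite{LS2} is proved for monotone $\ell_\infty$-non-expansive maps on all of $\mathbb{R}^m$, and applying it here requires either an extension of $F$ to $\mathbb{R}^m$ preserving monotonicity and non-expansiveness, or a separate argument that the bound holds on such images --- this is handled in \cite{AGLN} and \cite[Chapter~8]{LNBook} but is not automatic. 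Second, your ``main obstacle'' paragraph correctly flags the problem of orbits leaving $C^\circ$ and entering a boundary face, but the fixes you propose (working with $-\infty$ coordinates, or ``retraction arguments'') are placeholders; the actual localisation to a single part uses specific structural results about order-preserving homogeneous maps on polyhedral cones. Third, assertion~(2) is a two-sided statement: the upper bound follows from the same machinery as (1) and (3), but the claim that every listed integer actually occurs as a period requires explicit constructions, which are carried out in \cite{LSp}. In short, your proposal is a correct table of contents for the proofs in the cited references, and that is essentially the level of detail the paper itself provides.
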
 

In particular,  we see that on the cone $\mathbb{R}^3_+$, the set of possible periods of periodic points of order-preserving homogeneous mappings  $f\colon\mathbb{R}^3_+\to\mathbb{R}^3_+$ is $\{1,2,3,4,6\}$. So, it is impossible to have a period $5$ point in that case. An example of a mapping on $\mathbb{R}^3_+$ with a period $6$ orbit is the mapping 
$f\colon\mathbb{R}^3_+\to\mathbb{R}^3_+$ given by
\[
f\left (\begin{array}{c} x_1 \\ x_2 \\ x_3 \end{array}\right )= 
\left (\begin{array}{c} 
(3x_1\wedge x_2)\vee (3x_2\wedge x_3)\\
(3x_1\wedge x_3)\vee (3x_3\wedge x_2)\\
(3x_2\wedge x_1)\vee (3x_3\wedge x_1)\\
\end{array}\right )\mbox{\quad for $x\in\mathbb{R}^3_+$} 
\]
which has  $x=(1,2,0)$ as a period $6$ point. Here $a\wedge b =\min \{a,b\}$ and $a\vee b=\max\{a,b\}$ for $a,b\in\mathbb{R}$.

 In view of Corollary \ref{cor:5.2} it is interesting to ask the following question. 
 For which Hilbert geometries $(\Omega,\delta)$ do we have that the orbits of each non-expansive mapping $f\colon \Omega\to\Omega$  with a fixed point in $\Omega$ converge to periodic orbits?
 Obviously the answer is negative of $\Omega$ is the interior of an ellipsoid. However, the following was shown in \cite{Le}.
 \begin{theorem}\label{thm:5.4}
 If $(\Omega,\delta)$ is a strictly convex Hilbert geometry and there exists no $2$-dimensional affine plane $H$ such that $H\cap \Omega$ is the interior of an ellipsoid, then every orbit of a non-expansive mapping $f\colon\Omega\to\Omega$, with a fixed point in $\Omega$, converges to a periodic orbit. In fact, there exists an integer $q\geq 1$ such that $\lim_{k\to\infty} f^{kq}(x) $ exists for all $x\in\Omega$. 
 \end{theorem}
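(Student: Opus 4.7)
My approach is to analyze the asymptotic dynamics of $f$ on the $\omega$-limit set $\omega(x;f)$ and then exploit the hypothesis on $\Omega$ to show that the resulting symmetry groups are finite, uniformly in $x$.

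First, since $f(p)=p$ and $f$ is $\delta$-non-expansive, $\delta(f^k(x),p)$ is non-increasing and bounded. In a finite-dimensional Hilbert geometry, closed $\delta$-balls around interior points are compact, so the forward orbit closure, and hence $\omega(x;f)\subseteq\Omega$, are non-empty compact sets; moreover $\omega(x;f)$ lies in the Hilbert sphere $S_r(p)$ with $r:=\lim_k\delta(f^k(x),p)$. A classical double-subsequence argument for non-expansive self-maps of compact metric spaces then shows that $f|_{\omega(x;f)}$ is a surjective isometry onto itself, so the uniform closure $G_x:=\overline{\{f^k|_{\omega(x;f)}:k\ge 0\}}$ is a compact abelian topological group of isometries of $\omega(x;f)$. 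If $|G_x|=q_x<\infty$ then $f^{q_x}|_{\omega(x;f)}=\mathrm{id}$: picking any cluster point $y^\star\in\omega(x;f)$ of $\{f^{kq_x}(x)\}$, the sequence $\delta(f^{kq_x}(x),y^\star)=\delta(f^{kq_x}(x),f^{kq_x}(y^\star))$ is non-increasing in $k$ yet has $0$ as a subsequential limit, so $f^{kq_x}(x)\to y^\star$. Thus it suffices to bound $q_x$ uniformly in $x$.

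The main obstacle is precisely this uniform finiteness. I would lift each $G_x$ to a group of genuine isometries of $\Omega$ via the compact semigroup $\mathcal{S}:=\overline{\{f^k:k\ge 0\}}$ in the topology of uniform convergence on bounded subsets of $\Omega$, which is compact by Arzel\`a--Ascoli and properness of $(\Omega,\delta)$. Standard Ellis-type compact-semigroup theory gives a minimal two-sided ideal $\mathcal{M}\subseteq\mathcal{S}$ that decomposes as a disjoint union of groups; for any idempotent $e\in\mathcal{M}$, the kernel group $\mathcal{G}:=e\mathcal{M}e$ consists of non-expansive self-maps of $\Omega$ fixing $p$ whose restrictions to $e(\Omega)\supseteq\omega(x;f)$ are bijective, hence (inverses being also non-expansive) isometric. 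Because the only idempotent of a group is its identity and $e|_{\omega(x;f)}\in G_x$, we have $e|_{\omega(x;f)}=\mathrm{id}$, and restriction yields a continuous surjective homomorphism $\mathcal{G}\twoheadrightarrow G_x$ for every $x$. Suppose for contradiction that $\mathcal{G}$ is infinite. Being compact and abelian, it then contains a non-trivial continuous one-parameter subgroup $\{\phi_t\}_{t\in\mathbb{R}}$ of isometries of $\Omega$ fixing $p$. By the classical fact that isometries of a strictly convex Hilbert geometry are projective collineations preserving $\Omega$, each $\phi_t$ lies in $\mathrm{PGL}$, so its linearization at $p$ is a non-trivial one-parameter subgroup of a compact subgroup of $\mathrm{GL}(T_p\Omega)$. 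Semisimplicity then provides a two-dimensional $d\phi_t$-invariant subspace $V\subset T_p\Omega$ on which $d\phi_t$ acts as a rotation, so the affine 2-plane $H:=p+V$ is $\phi_t$-invariant. The strictly convex 2-dimensional Hilbert geometry $H\cap\Omega$ thus admits a non-trivial rotational one-parameter group of isometries fixing the interior point $p$; this forces the Hilbert balls around $p$ in $H\cap\Omega$ to be Euclidean disks, and hence $H\cap\Omega$ to be the interior of an ellipse, contradicting the hypothesis.

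Therefore $\mathcal{G}$ is finite, of some order $N$ depending only on $f$. Since each $G_x$ is a homomorphic image of $\mathcal{G}$, its order divides $N$; in particular $f^N|_{\omega(x;f)}=\mathrm{id}$ for every $x\in\Omega$. Taking $q:=N$, the argument of the second paragraph yields that $\lim_{k\to\infty}f^{kq}(x)$ exists for every $x\in\Omega$, completing the proof.
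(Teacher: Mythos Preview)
The paper does not give its own proof of this theorem; it merely cites \cite{Le}. So there is no ``paper proof'' to compare against, and your proposal must stand on its own.

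Your overall strategy---pass to the Ellis kernel $\mathcal{G}$ of the compact abelian semigroup $\overline{\{f^k\}}$, show $\mathcal{G}$ is finite, and read off a uniform period---is the right one, and the first and third paragraphs are fine. The gap is in the middle paragraph, at the sentence where you write that the one-parameter subgroup $\{\phi_t\}$ consists of ``isometries of $\Omega$'' and then invoke the classical fact that isometries of a strictly convex Hilbert geometry are projective collineations. You yourself just said that elements of $\mathcal{G}$ are merely non-expansive on $\Omega$ and are bijective (hence isometric) only on $R:=e(\Omega)$; the group inverse in $\mathcal{G}$ satisfies $\phi_t\circ\phi_{-t}=e$, not $\mathrm{id}_\Omega$. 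So $\phi_t$ is not an isometry of $\Omega$, and the de la Harpe--type classification does not apply to it as stated. Consequently there is no immediate reason $\phi_t$ is projective, no linearisation $d\phi_t(p)$ is available, and the 2-plane $H=p+V$ cannot be produced as written. A secondary issue is the claim that an infinite compact abelian group automatically contains a one-parameter subgroup: this needs $\mathcal{G}$ to be Lie, which you only get \emph{after} embedding it in some $\mathrm{PGL}$.

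The gap is reparable, but it takes real work that your write-up omits. Because $\Omega$ is strictly convex, $(\Omega,\delta)$ is uniquely geodesic with straight segments as geodesics; the standard midpoint argument then shows that $R=\mathrm{Fix}(e)$ is convex. Let $A=\mathrm{aff}(R)$ and $\Omega_A=\Omega\cap A$; the restriction of $\delta$ to $\Omega_A$ is the intrinsic Hilbert metric of $\Omega_A$. Each $\phi_t$ restricts to a bijective isometry of $R$ which sends segments to segments, hence (on $\mathrm{relint}\,R$, if $\dim A\ge 2$) is the restriction of a projective map $\tilde\phi_t$ of $A$. A cross-ratio argument using pairs of points in $\mathrm{relint}\,R$ shows $\tilde\phi_t(\partial\Omega_A)=\partial\Omega_A$, so $\tilde\phi_t\in\mathrm{Aut}(\Omega_A)\subset\mathrm{PGL}(A)$. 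Now $\mathcal{G}$ embeds in the compact Lie group $\mathrm{Aut}(\Omega_A)_p$, so if infinite it has a circle subgroup, and your rotation-plane/ellipse argument can be run inside $\Omega_A$ (whose 2-dimensional affine sections are also sections of $\Omega$). Without these steps, the passage from ``isometry of $e(\Omega)$'' to ``projective collineation of $\Omega$'' is unjustified.
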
 
 
\subsection{Denjoy-Wolff type theorems}
In this subsection we briefly discuss the behaviour of fixed point free non-expansive mappings on Hilbert geometries. A more detailed overview  of this topic is given by  A. Karlsson in this volume. 

Recall that if $f\colon \Omega\to\Omega$ is a fixed point free non-expansive mapping on a Hilbert geometry $(\Omega,\delta)$, then the {\em attractor}  \index{attractor} of $f$, 
\[
\mathcal{A}_f=\bigcup_{x\in\Omega} \omega(x;f),
\]
is contained in $\partial \Omega$ by Ca\l ka's result \cite{Cal}. In that case it is interesting to understand the structure of $\mathcal{A}_f$ in $\partial \Omega$. 
This problem was considered by Beardon in \cite{Bea1, Bea2}. He showed  that there is a striking resemblance between the dynamics of fixed point free mappings on Hilbert geometries and the dynamics of fixed point free analytic self-mappings of the open unit disc in the complex plane,  which is characterized by the classical Denjoy-Wolff theorem \cite{De,Wo1,Wo2}. 
\begin{theorem}[Denjoy-Wolff]  \index{Denjoy-Wolff theorem} If $\colon\mathbb{D}\to\mathbb{D}$ is a fixed point free analytic mapping on the open unit disc $\mathbb{D}$ in $\mathbb{C}$, then there exists a unique $\eta\in\partial \mathbb{D}$ such that $\lim_{k\to\infty} f^k(x) =\eta$ for all $x\in\mathbb{D}$, and the convergence is uniform on compact subsets of $\mathbb{D}$. 
\end{theorem}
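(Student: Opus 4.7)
The plan is to carry out Wolff's classical proof, which fits naturally into the Hilbert-metric framework of this paper once one recognises that the Poincar\'e metric on $\mathbb{D}$ coincides (up to a constant factor) with the Hilbert metric on $\mathbb{D}$ regarded as a bounded open convex subset of $\mathbb{R}^{2}$, and that the Schwarz--Pick lemma is precisely the statement that any analytic self-map of $\mathbb{D}$ is non-expansive with respect to this metric. The argument proceeds in four steps: produce a boundary candidate $\eta\in\partial\mathbb{D}$ by a fixed-point approximation, prove Wolff's horodisc-invariance lemma at $\eta$, deduce that every orbit converges to $\eta$, and verify uniqueness and local uniformity.

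For the first step, I would set $f_r(z)=rf(z)$ for $r\in(0,1)$. Then $f_r(\overline{\mathbb{D}})\subseteq r\overline{\mathbb{D}}\subset\mathbb{D}$, so Brouwer's fixed-point theorem yields $z_r\in\mathbb{D}$ with $f_r(z_r)=z_r$. Choose $r_n\to 1$ with $z_{r_n}\to \eta\in\overline{\mathbb{D}}$; if $\eta$ were interior, continuity would give $f(\eta)=\eta$, contradicting the fixed-point-free hypothesis, so $\eta\in\partial\mathbb{D}$.

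The second step is Wolff's invariant-horodisc lemma. Applying Schwarz--Pick to $f_r$ with $w=z_r$ and using $f_r(z_r)=z_r$ yields
\[
\frac{|f_r(z)-z_r|^{2}}{1-|f_r(z)|^{2}}\;\le\;\frac{|z-z_r|^{2}}{1-|z|^{2}}.
\]
Letting $r_n\to 1$ (so that $f_{r_n}(z)\to f(z)$ and $z_{r_n}\to\eta$) gives
\[
\frac{|f(z)-\eta|^{2}}{1-|f(z)|^{2}}\;\le\;\frac{|z-\eta|^{2}}{1-|z|^{2}},
\]
so every horodisc $H_R=\{z\in\mathbb{D}:|z-\eta|^{2}/(1-|z|^{2})\le R\}$ at $\eta$ is $f$-invariant. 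The critical geometric observation for the next step is that the Euclidean closure of $H_R$ in $\overline{\mathbb{D}}$ meets $\partial\mathbb{D}$ only at the single point $\eta$.

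For the third step, fix $z_0\in\mathbb{D}$ and set $R_0=|z_0-\eta|^{2}/(1-|z_0|^{2})$. Horodisc invariance forces $f^k(z_0)\in H_{R_0}$ for all $k$, so every boundary accumulation point of the orbit must equal $\eta$. Interior accumulation is excluded by the Ca\l ka theorem quoted earlier in the paper, applied to the non-expansive map $f$ on the Hilbert geometry $(\mathbb{D},\delta)$, since $f$ is fixed-point free. Consequently $f^{k}(z_0)\to\eta$. Uniqueness of $\eta$ is immediate from this convergence, and uniform convergence on compact subsets follows from Vitali's theorem applied to the normal family $\{f^{k}\}$. The main obstacle, in my view, is the limiting step in Wolff's lemma: one must justify passing to the limit in the Schwarz--Pick quotient as $z_{r_n}\to\partial\mathbb{D}$, where the factor $1-|z_{r_n}|^{2}$ collapses to $0$ and survives only through the cancellation between the two sides; all remaining ingredients are soft compactness or normal-family arguments.
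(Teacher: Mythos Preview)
The paper does not give its own proof of the Denjoy--Wolff theorem; the result is quoted as classical (with references to Denjoy and Wolff) purely to motivate the subsequent Denjoy--Wolff type theorems for Hilbert geometries. So there is no in-paper argument to compare your proposal against.

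Your outline is precisely Wolff's classical proof and is sound. One small correction: the intermediate inequality you record before passing to the limit,
\[
\frac{|f_r(z)-z_r|^{2}}{1-|f_r(z)|^{2}}\;\le\;\frac{|z-z_r|^{2}}{1-|z|^{2}},
\]
is not what Schwarz--Pick delivers at an \emph{interior} fixed point $z_r$; the correct version has $|1-\overline{z_r}\,f_r(z)|^{2}$ and $|1-\overline{z_r}\,z|^{2}$ in the numerators (obtained by rewriting the pseudo-hyperbolic inequality via the identity $1-|\tfrac{a-b}{1-\bar b a}|^{2}=\tfrac{(1-|a|^{2})(1-|b|^{2})}{|1-\bar b a|^{2}}$ and cancelling the common factor $1-|z_r|^{2}$). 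Because $|1-\overline{\eta}\,w|=|\eta-w|$ whenever $|\eta|=1$, the two expressions coincide in the limit $z_{r_n}\to\eta\in\partial\mathbb{D}$, so your limiting horodisc inequality and everything downstream (the appeal to Ca\l ka, uniqueness, and Vitali for local uniformity) go through unchanged.
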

Analytic self-mappings of the open unit disc are non-expansive under the Poincar\'e metric by the Schwarz-Pick lemma. In \cite{Bea2} Beardon noted that the Denjoy-Wolff theorem should be viewed as a result in geometry, as it essentially only depends on the hyperbolic properties of the Poincar\'e metric  on $\mathbb{D}$. In fact, he showed that the Denjoy-Wolff theorem can be generalized to fixed point free non-expansive mappings on metric spaces that possess sufficient ``hyperbolic'' properties. As a particular consequence of his results he obtained in \cite{Bea2} the following Denjoy-Wolff theorem result  for strictly convex Hilbert geometries. 
\begin{theorem} If $f\colon\Omega\to\Omega$ is a fixed point free non-expansive mapping on a strictly convex Hilbert geometry $(\Omega,\delta)$, then there exists a unique $\eta\in\partial \Omega$ such that $\lim_{k\to\infty} f^k(x) =\eta$ for all $x\in\Omega$, and the convergence is uniform on compact subsets of $\Omega$. 
\end{theorem}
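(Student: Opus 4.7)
The plan is to follow Beardon's approximate fixed point strategy. By the theorem of Ca\l ka cited above, since $f$ has no fixed point in $\Omega$, every $\omega$-limit set $\omega(x;f)$ lies in $\partial\Omega$; the task is to show this set is a single point $\eta\in\partial\Omega$, independent of $x$, and that the convergence is uniform on compact subsets of $\Omega$.

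First I would construct approximate fixed points of $f$. Fix $x_0\in\Omega$ and, for $t\in(0,1)$, define $f_t\colon\Omega\to\Omega$ by $f_t(x)=(1-t)x_0+tf(x)$, which lies in $\Omega$ by convexity. A direct cross-ratio calculation shows that the affine homothety $g_t\colon x\mapsto(1-t)x_0+tx$ is a $t$-Lipschitz contraction with respect to $\delta$, so $f_t=g_t\circ f$ is a strict $t$-contraction on the complete metric space $(\Omega,\delta)$. Banach's fixed point theorem then gives a unique $\zeta_t\in\Omega$ with $\zeta_t=(1-t)x_0+tf(\zeta_t)$; in particular $\zeta_t$ lies on the straight segment from $x_0$ to $f(\zeta_t)$ at interior parameter $t$, and a short cross-ratio computation yields $\varepsilon(t):=\delta(\zeta_t,f(\zeta_t))\to 0$ as $t\to 1$. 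If $\{\zeta_t\}$ remained in a $\delta$-bounded subset of $\Omega$ along some sequence $t_k\to 1$, a Hilbert-subsequential limit would be a genuine fixed point of $f$, a contradiction; hence $\delta(x_0,\zeta_t)\to\infty$, and Euclidean compactness of $\overline\Omega$ yields a subsequence $\zeta_{t_k}\to\eta\in\partial\Omega$.

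The decisive step, and the only one that uses strict convexity, is to prove $f^n(x_0)\to\eta$ in the Euclidean topology. Suppose for contradiction $f^{n_j}(x_0)\to\xi\neq\eta$ along some subsequence. Iterated non-expansiveness gives
\[
\delta(f^n(x_0),\zeta_t)\le\delta(f^n(x_0),f^n(\zeta_t))+\delta(f^n(\zeta_t),\zeta_t)\le\delta(x_0,\zeta_t)+n\,\varepsilon(t),
\]
the last inequality by telescoping $\delta(f^{k+1}(\zeta_t),f^k(\zeta_t))\le\varepsilon(t)$. Now choose $t_{k(j)}$ along $\{t_k\}$ with $k(j)\to\infty$ and $n_j\,\varepsilon(t_{k(j)})\to 0$; then
\[
2\bigl(f^{n_j}(x_0)\mid\zeta_{t_{k(j)}}\bigr)_{x_0}=\delta(f^{n_j}(x_0),x_0)+\delta(\zeta_{t_{k(j)}},x_0)-\delta(f^{n_j}(x_0),\zeta_{t_{k(j)}})\ge\delta(f^{n_j}(x_0),x_0)-o(1),
\]
which diverges to $+\infty$. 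But $\zeta_{t_{k(j)}}\to\eta$, $f^{n_j}(x_0)\to\xi$, and strict convexity of $\Omega$ forces $[\xi,\eta]\not\subset\partial\Omega$; the Karlsson-Noskov bound recalled in the proof of Theorem~\ref{thm:iso} then provides a uniform upper bound on this Gromov product, a contradiction. Hence every Euclidean subsequential limit of $\{f^n(x_0)\}$ equals $\eta$, so $f^n(x_0)\to\eta$.

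For arbitrary $x\in\Omega$, non-expansiveness gives $\delta(f^n(x),f^n(x_0))\le\delta(x,x_0)$, trapping $f^n(x)$ in a $\delta$-ball of fixed radius around $f^n(x_0)\to\eta$; since such $\delta$-balls shrink to a point in Euclidean diameter as their center approaches $\partial\Omega$, $f^n(x)\to\eta$ too, and uniformity on any compact $K\subset\Omega$ follows from $\mathrm{diam}_\delta K<\infty$. The main obstacle is the balancing argument in the previous paragraph: choosing the diagonal subsequence $t_{k(j)}$ fast enough that $\zeta_{t_{k(j)}}\to\eta$ but slow enough that $n_j\,\varepsilon(t_{k(j)})$ is negligible compared to the escape rate $\delta(f^{n_j}(x_0),x_0)$. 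Strict convexity is essential here: without it, distinct boundary limit points can genuinely coexist, as happens for affine translations on a simplex.
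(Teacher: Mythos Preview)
The paper does not actually give a proof of this theorem; it is quoted as a consequence of Beardon's work in \cite{Bea2}, with the remark that the arguments were later sharpened by Karlsson \cite{Ka}. So there is no proof in the paper to compare your proposal against, and I will instead assess the proposal on its own merits.

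Your overall strategy---approximate fixed points $\zeta_t$ escaping to $\partial\Omega$, combined with the Karlsson--Noskov bound on Gromov products---is in the right circle of ideas, but there is a genuine gap: the assertion that $\varepsilon(t):=\delta(\zeta_t,f(\zeta_t))\to 0$ as $t\to 1$ is false. Take $\Omega=(0,1)$, $f(x)=x/2$ (which is fixed-point free and $\delta$-non-expansive), and $x_0=\tfrac12$. Solving $\zeta_t=(1-t)/2+t\zeta_t/2$ gives $\zeta_t=(1-t)/(2-t)\to 0\in\partial\Omega$, and a direct cross-ratio computation yields
\[
\varepsilon(t)=\log 2+\log\frac{3-t}{2}\longrightarrow \log 2\neq 0.
\]
In general one only has the uniform bound $\varepsilon(t)\le\delta(x_0,f(x_0))$, obtained from collinearity of $x_0,\zeta_t,f(\zeta_t)$ together with non-expansiveness: $\delta(\zeta_t,f(\zeta_t))=\delta(x_0,f(\zeta_t))-\delta(x_0,\zeta_t)\le\delta(x_0,f(x_0))$.

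This breaks your diagonal step. Your lower bound $2(f^{n_j}(x_0)\mid\zeta_t)_{x_0}\ge\delta(f^{n_j}(x_0),x_0)-n_j\varepsilon(t)$ becomes useless once $\varepsilon(t)$ is merely bounded, since $\delta(f^{n}(x_0),x_0)\le n\,\delta(x_0,f(x_0))$ as well; the right-hand side need not diverge for any choice of $t=t(j)$. The repair is to abandon the attempt to make $n\varepsilon(t)$ small and instead use only the boundedness of $\varepsilon(t)$: the closed balls $\overline{B}(\zeta_t,\delta(x_0,\zeta_t))$ contain $x_0$, satisfy $f\bigl(\overline{B}(\zeta_t,r)\bigr)\subseteq\overline{B}(\zeta_t,r+\varepsilon(t))$, and as $\zeta_t\to\eta$ they converge to a horoball at $\eta$ which is then $f$-invariant; strict convexity forces this horoball to meet $\partial\Omega$ only at $\eta$, and since $x_0$ lies in it, so does the whole orbit $\{f^n(x_0)\}$. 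This is essentially Beardon's route. Your final paragraph, passing from $x_0$ to arbitrary $x$ and to uniform convergence on compacta via bounded $\delta$-diameter, is correct once the main step is fixed.
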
   

Beardon's arguments were sharpened by Karlsson in \cite{Ka}. It is known that Beardon's result does not hold for general Hilbert geometries. In fact,  for the Hilbert geometry on the open $2$-simplex, $\Delta_2$, Lins \cite{Li1} showed that if $S$ is a convex subset of $\partial \Delta_2$, then there exists a fixed point free non-expansive mapping $f\colon\Delta_2^\circ\to\Delta_2^\circ$ such that 
\[
S = \bigcup_{x\in\Omega} \omega(x;f).
\]
It was conjectured, however, by Karlsson and Nussbaum that the following is true, see \cite{N8}. 
\begin{conjecture}[Karlsson-Nusbaum] \label{con:KN}\index{Karlsson-Nussbaum conjecture} If $(\Omega,\delta)$ is a Hilbert geometry and $f\colon \Omega\to\Omega$ is a fixed point free non-expansive mapping, then there exists a convex set $\Lambda$ in $\partial \Omega$ such that $\omega(x,f)\subseteq \Lambda$ for all $x\in\Omega$
\end{conjecture}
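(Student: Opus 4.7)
The plan is to combine Ca\l ka's result with a horofunction construction in the spirit of Beardon and Karlsson, and then to identify the convex set $\Lambda$ as a distinguished face of $\partial\Omega$ cut out by an $f$-invariant horofunction. Throughout, I would work with Birkhoff's version of the metric on the cone $C_\Omega\subseteq\mathbb{R}\times V$ associated to $\Omega$, as in the remark following Theorem \ref{thm:2.2}, so that order-theoretic tools such as the Funk metric $F_C$ and the functionals in $\mathcal{E}_u$ become available.

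First, I would fix a basepoint $b\in\Omega$ and consider the normalized distance functions $h_k(x)=\delta(x,f^k(b))-\delta(b,f^k(b))$. These are $1$-Lipschitz with $h_k(b)=0$, and by the Arzel\`a--Ascoli theorem applied on $\delta$-balls some subsequence converges pointwise to a horofunction $h\colon\Omega\to\mathbb{R}$. Set the translation length $\tau(f)=\lim_{k\to\infty}\delta(b,f^k(b))/k\ge 0$. A standard subadditivity argument, together with the non-expansiveness of $f$, gives
\[
h(f(x))\le h(x)-\tau(f)\qquad\text{for all }x\in\Omega,
\]
so that every sublevel set $\{h\le c\}$ is an $f$-invariant horoball; the case $\tau(f)=0$ would be handled by a separate compactness argument, again producing a nontrivial $f$-invariant horofunction.

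Next, I would use Walsh's description of the horofunction compactification of a Hilbert geometry in terms of faces of $\partial\Omega$ together with Funk-type functions supported on those faces: every Busemann point corresponds canonically to a pair consisting of a face $E\subseteq\partial\Omega$ (the \emph{direction at infinity} of the horofunction) and an auxiliary convex datum on $E$. Given the $f$-invariant horofunction $h$ produced above, I would take $\Lambda$ to be the associated face $E$. Convexity of $\Lambda$ is then automatic, since faces of $\partial\Omega$ are convex. The containment $\omega(x,f)\subseteq\Lambda$ would follow because any subsequential limit $y$ of $(f^{k_i}(x))$ must lie in the nested intersection
\[
\bigcap_{c\in\mathbb{R}}\overline{\{h\le c\}}\cap\partial\Omega,
\]
and the horofunction-to-face correspondence identifies this intersection with $E$.

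The hard part, and the reason the conjecture is open in full generality, is to carry out this recipe without strict convexity of $\Omega$. Without strict convexity the limit $h$ need not be a Busemann point, the ``direction at infinity'' may be a genuinely higher-dimensional face rather than a single point, and the translation length $\tau(f)$ can vanish even for fixed-point-free $f$ (as in Lins' examples on $\Delta_2^\circ$). In the infinite-dimensional or non-polytopal setting, the horofunction compactification is not yet fully understood, and one must deal with metric functionals that are not attached to any face. The crucial obstacle, therefore, is to show that all $\omega$-limit points of \emph{every} orbit sit in a \textbf{single} convex face, rather than being spread across several faces that depend on the orbit; this would seem to require a finer analysis of the interplay between the $f$-orbit structure, the Funk weak metric $F_C$ on $C_\Omega^\circ$, and the combinatorics of the extreme functionals in $\mathcal{E}_u$ that vanish asymptotically along the orbit.
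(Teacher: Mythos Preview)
The statement you are addressing is labeled in the paper as a \emph{conjecture}, not a theorem, and the paper contains no proof of it; on the contrary, the authors write that ``despite numerous efforts the Karlsson--Nussbaum conjecture remains one of the most outstanding problems in the field.'' So there is no paper proof to compare your attempt against, and the relevant question is whether your outline actually closes the gap. It does not, and you yourself say as much in your final paragraph: the passage beginning ``The hard part, and the reason the conjecture is open\ldots'' is an honest inventory of the missing steps, not a resolution of them.

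Beyond the acknowledged gaps, there is a concrete flaw in the strategy. You propose to take $\Lambda$ to be the \emph{face} $E\subseteq\partial\Omega$ attached to the limiting horofunction. But Lins' result quoted in the paper shows that for the open $2$-simplex \emph{every} convex subset $S\subseteq\partial\Delta_2$ arises as the attractor of some fixed-point-free non-expansive $f$; in particular $S$ need not be a face (it can be a proper subsegment of an edge, or a single vertex together with part of the opposite edge is ruled out, but an arc strictly inside an edge is allowed). Hence the assignment ``horofunction $\mapsto$ face'' cannot in general produce the correct $\Lambda$, and the intersection $\bigcap_c\overline{\{h\le c\}}\cap\partial\Omega$ is not always a face. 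Moreover, your treatment of the case $\tau(f)=0$ (``handled by a separate compactness argument'') is exactly the case that is open: Karlsson's Theorem~\ref{thm:5.10} already settles $\tau(f)>0$, and Nussbaum's theorem settles the subcase $\liminf_k\delta(f^{k+1}(z),f^k(z))=0$, but the intermediate regime where $\tau(f)=0$ yet step sizes stay bounded away from zero is precisely where no horofunction argument is currently known to work. Your sketch does not supply the missing idea there.
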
 
It turns out that to prove the conjecture it suffices to show that there exists $x\in\Omega$ such that the convex hull of $\omega(x;f)$ is contained in $\partial \Omega$, see \cite{N8}.    
At present there exist only partial results for Conjecture \ref{con:KN}. To begin there exists the following result by Lins \cite{Li1}. 
\begin{theorem}
If $(\Omega,\delta)$ is a polytopal Hilbert geometry and $f\colon \Omega\to\Omega$ is a fixed point free non-expansive mapping, then there exists $\Lambda\subset\partial\Omega$ such that $\omega(x;f)\subseteq \Lambda$ for all $x\in\Omega$. 
\end{theorem}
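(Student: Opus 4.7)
My plan is to couple the isometric embedding of polytopal Hilbert geometries into a sup-norm space (Theorem \ref{thm:polytope}) with a Kohlberg-type theorem on the asymptotic direction of non-expansive sup-norm dynamics. Let $\psi_1,\ldots,\psi_m$ denote the facet-defining functionals of $\overline{\Omega}$, and let $\Psi\colon\Omega\to(\mathbb{R}^{m(m-1)/2},\|\cdot\|_\infty)$ be the isometric embedding of Theorem \ref{thm:polytope}, with coordinates $\Psi_{ij}=\log\psi_i-\log\psi_j$ for $i<j$. For $J\subseteq\{1,\ldots,m\}$ set $F_J=\{z\in\overline{\Omega}\colon\psi_i(z)=0\text{ for all }i\in J\}$; each $F_J$ is a closed convex face of $\overline{\Omega}$ lying in $\partial\Omega$ whenever $J\neq\emptyset$. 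By the Nussbaum criterion stated just after Conjecture \ref{con:KN}, it suffices to produce $x\in\Omega$ and a nonempty proper $J$ with $\omega(x;f)\subseteq F_J$.

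Transfer $f$ via $\tilde f=\Psi\circ f\circ\Psi^{-1}$ to a sup-norm non-expansive self-map of $\Psi(\Omega)\subseteq\mathbb{R}^{m(m-1)/2}$, and extend it to all of $\mathbb{R}^{m(m-1)/2}$ by applying McShane's extension theorem coordinate by coordinate. The key analytic input is that for any sup-norm non-expansive self-map of a finite-dimensional space the escape vector $\chi=\lim_{k\to\infty}\tilde f^k(y)/k$ exists and is independent of $y$. Lifting $\chi$ through the identities $\Psi_{ij}=\log\psi_i-\log\psi_j$ produces $(\beta_1,\ldots,\beta_m)\in\mathbb{R}^m$, unique up to a common additive constant, with $\log\psi_i(f^k(x))/k\to-\beta_i$ for every $i$ and every $x\in\Omega$. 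Since each $\psi_i$ is bounded above on $\overline{\Omega}$, normalize so that every $\beta_i\geq 0$ and $\min_i\beta_i=0$, and set $J=\{i\colon\beta_i>0\}$. Ca\l ka's theorem applied to the fixed-point-free $f$ gives $\omega(x;f)\subseteq\partial\Omega$; provided $\chi\neq 0$, this forces $J$ to be nonempty and proper. For $i\in J$, $\log\psi_i(f^k(x))=-k\beta_i+o(k)\to-\infty$, so $\psi_i(f^k(x))\to 0$; hence every $\eta\in\omega(x;f)$ satisfies $\psi_i(\eta)=0$ for all $i\in J$, i.e.\ $\omega(x;f)\subseteq F_J\subseteq\partial\Omega$.

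The main obstacle is the Kohlberg-type input in the previous step: the existence of the escape vector $\chi$ for a general (not just piecewise affine) sup-norm non-expansive map is a nontrivial result, proved through subadditive and horofunction arguments. A secondary subtlety is the degenerate case $\chi=0$, in which all $\beta_i$ vanish and the linearization above gives no proper face directly; there one would pass to weak limits of the horofunctions $z\mapsto\delta(z,f^k(x))-\delta(p,f^k(x))$ and use the explicit description of the horofunction boundary of polytopal Hilbert geometries (each ideal point being supported on a face) to single out the required $F_J$. Once an escape direction is in hand, independence of $\chi$ from the starting point $y$ transfers to independence of $J$ from $x\in\Omega$, so a single face $\Lambda=F_J$ works for every orbit.
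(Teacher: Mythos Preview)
There is a genuine gap at the heart of your argument: the ``Kohlberg-type input'' you invoke is false as stated. For a general sup-norm nonexpansive self-map of $\mathbb{R}^N$ the escape vector $\chi=\lim_{k\to\infty}\tilde f^k(y)/k$ need \emph{not} exist. A simple counterexample on $(\mathbb{R}^2,\|\cdot\|_\infty)$ is $g(x,y)=(x+1,h(x))$ with $h$ any $1$-Lipschitz function for which $h(k)/k$ oscillates (e.g.\ $h'(t)\in[0,1]$ chosen so that the running average $\tfrac{1}{k}\int_0^k h'$ does not converge); then $g$ is sup-norm nonexpansive and $g^k(0)/k=(1,h(k-1)/k)$ has no limit. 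The same construction, with $0\le h'\le 1$, is nonexpansive for the hexagonal norm $\|\cdot\|_H$, so it already arises as a Hilbert-metric nonexpansive map on the open $2$-simplex. Thus even after restricting to the cocycle subspace carrying $\Psi(\Omega)$, the limit $\chi$ can fail to exist, and your extraction of the $\beta_i$ and the face $F_J$ from $\chi$ breaks down. (A smaller imprecision: even when $\chi$ exists, it only gives convergence of the differences $(\log\psi_i-\log\psi_j)(f^k(x))/k$, not of each $\log\psi_i(f^k(x))/k$ individually; the conclusion $\psi_i(f^k(x))\to 0$ for $i\in J$ should be argued via the ratio $\psi_i/\psi_j$ with $j\notin J$.)

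What survives of your plan is precisely the horofunction argument you relegate to a ``secondary subtlety,'' and that is in fact the main engine of Lins' proof as sketched in the paper. The isometric embedding into a finite-dimensional sup-norm space is used not to produce an escape \emph{vector}, but to guarantee that the horofunction boundary is tame enough (essentially finite, given by functions built from the $\log\psi_i$) so that for every $x\in\Omega$ one can select a horofunction $h$ with $h(f^k(x))\to-\infty$. The explicit form of such an $h$ then forces some fixed collection of $\psi_i$ to vanish along every accumulation point of the orbit, yielding the face $F_J$. In the counterexample above, the horofunction $h(x,y)=-x$ does this job even though $g^k(0)/k$ diverges. So the right fix is to drop the escape-vector step entirely and run the horofunction argument in all cases, not just when $\chi=0$.
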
 
 To prove this theorem Lins used the fact that a  polytopal Hilbert geometry can be isometrically embedded into a finite-dimensional normed space.  This property allowed him to show that for each $x\in\Omega$ there exists a horofunction $h\colon\Omega\to\mathbb{R}$ such that $\lim_{k\to\infty} h(f^k(x))=-\infty$.  
For general Hilbert geometries such a horofunction does not always exist, see \cite[Remark 3.2]{Linsthesis}. So, there seems to be no apparent  way to generalize Lins' arguments to the general case. 

 The following partial result for Conjecture \ref{con:KN} is due Karlsson \cite{Ka}. 
 \begin{theorem}\label{thm:5.10} If $f\colon \Omega\to\Omega$ is a fixed point free non-expansive mapping on a Hilbert geometry $(\Omega,\delta)$ and there exists $z\in \Omega$ such that 
 \[
 \lim_{k\to\infty} \frac{\delta(f^k(z),z)}{k} >0, 
 \]
then there exists $\Lambda\subseteq\partial \Omega$ convex such that $\omega(x;f)\subseteq\Lambda$ for all $x\in\Omega$. 
 \end{theorem}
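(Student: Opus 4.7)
The plan is to construct a $1$-Lipschitz function $h\colon\Omega\to\mathbb{R}$ --- a horofunction obtained as a Busemann-type limit along the orbit $(f^k(z))$ --- that strictly decreases along $f$ in the sense $h(f(x))\leq h(x)-a$, and then to exploit the convex-projective structure of $(\Omega,\delta)$ (specifically the Birkhoff/cross-ratio representation of $\delta$) to show that the attractor of $h\to-\infty$ is a genuine convex subset $\Lambda\subseteq\partial\Omega$ containing every $\omega$-limit set of $f$.

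Set $a_k=\delta(f^k(z),z)$. Non-expansiveness of $f$ combined with the triangle inequality gives $a_{k+\ell}\leq a_k+a_\ell$, so by Fekete's lemma $a=\lim_k a_k/k$ exists, and by hypothesis $a>0$. Next, invoke Karlsson's lemma on subadditive sequences to extract a subsequence $n_j\to\infty$ such that, for every fixed $\ell\geq 1$ and every $\varepsilon>0$, $a_{n_j}-a_{n_j-\ell}\geq (a-\varepsilon)\ell$ for all large $j$. Define $h_j(x)=\delta(x,f^{n_j}(z))-a_{n_j}$; these are uniformly $1$-Lipschitz and satisfy $|h_j(x)|\leq\delta(x,z)$, so Arzel\`a--Ascoli together with a diagonal extraction produces a locally uniform limit $h_j\to h$ along a further subsequence (relabelled as $n_j$). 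Non-expansiveness gives $\delta(f(x),f^{n_j}(z))\leq\delta(x,f^{n_j-1}(z))$, hence
\[
h_j(f(x))\leq h_{j-1}(x)+\bigl(a_{n_j-1}-a_{n_j}\bigr).
\]
Passing to the limit and applying Karlsson's estimate $a_{n_j}-a_{n_j-1}\geq a-\varepsilon$ (after a further refinement ensuring $h_{j-1}\to h$ as well) yields $h(f(x))\leq h(x)-a$. Iterating, $h(f^k(x))\leq h(x)-ka\to-\infty$.

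To produce the convex set $\Lambda$, embed $(\Omega,\delta)$ as $(\Sigma_\phi\cap C^\circ,d)$ via Theorem~\ref{thm:2.2}. By Lemma~\ref{lem:4.1}, each $h_j$ is a difference of a maximum and a minimum over $\psi\in\mathcal{E}_u$ of terms $\log(\psi(x)/\psi(f^{n_j}(z)))$; the limit $h$ inherits a representation supported on the subset $\mathcal{F}\subseteq\mathcal{E}_u$ of extremal functionals on which $\psi(f^{n_j}(z))$ tends to zero fastest along the subsequence. A boundary point $\eta\in\partial\Omega$ can be approached by a sequence $x_n\in\Omega$ with $h(x_n)\to-\infty$ only if $\psi(\eta)=0$ for every $\psi\in\mathcal{F}$, and this common zero locus is an intersection of exposed faces of $\overline{\Omega}$, hence convex. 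Set $\Lambda$ to be this set intersected with $\partial\Omega$: by construction, for any convergent subsequence $f^{k_i}(x)\to\eta\in\partial\Omega$ the descent $h(f^{k_i}(x))\to-\infty$ forces $\psi(\eta)=0$ for all $\psi\in\mathcal{F}$, so $\omega(x;f)\subseteq\Lambda$ for every $x\in\Omega$.

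The main obstacle is twofold: first, Karlsson's subadditive lemma --- producing the subsequence $n_j$ with uniform descent of $a_n-a_{n-\ell}$ against $a\ell$ --- is a delicate combinatorial/ergodic step that does not follow from Fekete's lemma alone, and it is really what makes the hypothesis $a>0$ usable; second, the passage from the abstract horofunction descent $h(f^k(x))\to-\infty$ to a genuinely convex subset of $\partial\Omega$ relies essentially on the cross-ratio representation of $\delta$ given by Lemma~\ref{lem:4.1}. Without that convex-projective input one would obtain only a star-like metric attractor rather than an honest convex face of $\overline{\Omega}$.
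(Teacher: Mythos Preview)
The paper does not prove this theorem; it is stated as a result of Karlsson with a citation to \cite{Ka}, so there is no in-paper proof to compare against. Your first half---subadditivity of $a_k$, Karlsson's subadditive lemma to select the subsequence $(n_j)$, Arzel\`a--Ascoli to produce a horofunction $h$ with $h(f(x))\le h(x)-a$---is indeed the backbone of Karlsson's argument and is correct as outlined.

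The second half, however, has a genuine gap. Your claim that ``the limit $h$ inherits a representation supported on the subset $\mathcal{F}\subseteq\mathcal{E}_u$ of extremal functionals on which $\psi(f^{n_j}(z))$ tends to zero fastest'' is not established, and in fact horofunctions of Hilbert geometries do not in general admit such a clean single-face description (cf.\ Walsh \cite{Wa2}). Consequently the implication ``$h(x_n)\to-\infty$ and $x_n\to\eta$ forces $\psi(\eta)=0$ for every $\psi\in\mathcal{F}$'' is unjustified: you have not defined $\mathcal{F}$ precisely, nor shown that the descent of $h$ is controlled solely by those functionals. The sketch replaces a geometric fact by an unproved structural assertion about horofunctions.

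The route that actually closes the argument is much simpler and does not require any analysis of $\mathcal{E}_u$: closed balls in a Hilbert geometry are convex in the ambient affine sense (this is immediate from the cross-ratio formula, or equivalently from Lemma~\ref{lem:4.1}, since $\{x:\delta(x,y)\le r\}=\{x:\psi(x)\phi(y)\le e^r\phi(x)\psi(y)\text{ for all }\phi,\psi\in\mathcal{E}_u\}$ is an intersection of half-spaces). Horoballs $\{h\le c\}$, being locally uniform limits of balls, are therefore convex as well. Setting $\Lambda=\bigcap_{c\in\mathbb{R}}\overline{\{x\in\Omega:h(x)\le c\}}$, one obtains a convex subset of $\overline{\Omega}$ that lies in $\partial\Omega$ because $h$ is finite on $\Omega$; and since $h(f^k(x))\to-\infty$, every $\omega$-limit point of any orbit lies in each $\overline{\{h\le c\}}$, hence in $\Lambda$. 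This is the convexity input you need, and it replaces your unsubstantiated extremal-functional step entirely.
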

 In \cite{KN} Karlsson and Noskov showed the following result, which says that the attractor $\mathcal{A}_f$ of a fixed point free non-expansive mapping $f$ on a Hilbert geometry  $(\Omega,\delta)$ is a star-shaped subset of $\partial \Omega$. 
 \begin{theorem}
  If $f\colon \Omega\to\Omega$ is a fixed point free non-expansive mapping on a Hilbert geometry $(\Omega,\delta)$, then there exists $\eta\in\partial\Omega$ such that for $y\in\mathcal{A}_f$ the straight line segment $[y,\eta]$ is contained in $\partial\Omega$. 
 \end{theorem}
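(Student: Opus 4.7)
Fix a basepoint $x_0 \in \Omega$ and set $a_n = \delta(x_0, f^n(x_0))$. Since $f$ is fixed-point free and $\omega(x_0;f)\subseteq \partial\Omega$ by Ca\l ka's result, the orbit has no accumulation point in $\Omega$, so $a_n\to \infty$. The plan is to choose $\eta$ as a carefully selected accumulation point of the orbit on $\partial \Omega$ and then apply, in contrapositive form, the Karlsson--Noskov Gromov-product estimate recalled in the proof of Theorem~\ref{thm:iso}: if one can exhibit sequences $z_k\to z$ and $w_k\to w$ in $\partial\Omega$ whose Gromov products $(z_k\mid w_k)_{x_0}$ are unbounded, then $[z,w]\subseteq \partial\Omega$.

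To construct $\eta$, I use the ``record times'' of $a_n$. Let $N = \{ n\ge 1 : a_n > a_m \text{ for all } 0\le m < n\}$, which is infinite since $a_n\to\infty$. Enumerate $N = \{n_1 < n_2 < \cdots\}$ and, after passing to a subsequence (still denoted $n_k$), assume $f^{n_k}(x_0)\to \eta$; necessarily $\eta\in\partial \Omega$ because $a_{n_k}\to \infty$ rules out an interior limit.

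The claim is that $[y,\eta]\subseteq\partial\Omega$ for every $y\in \mathcal{A}_f$. Write $y = \lim_j f^{m_j}(x_0)$ with $m_j\to\infty$, setting aside the trivial cases. For each $j$ pick the smallest $k(j)$ with $n_{k(j)} > m_j$; then $k(j)\to \infty$, so $f^{n_{k(j)}}(x_0)\to \eta$. Iterating non-expansiveness of $f$ gives $\delta(f^{m_j}(x_0),f^{n_{k(j)}}(x_0))\le a_{n_{k(j)}-m_j}$, while the record property applied at $n_{k(j)}$ with the index $n_{k(j)}-m_j < n_{k(j)}$ gives $a_{n_{k(j)}} > a_{n_{k(j)}-m_j}$. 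Combining,
$$2\bigl(f^{m_j}(x_0)\mid f^{n_{k(j)}}(x_0)\bigr)_{x_0} = a_{m_j}+a_{n_{k(j)}}-\delta\bigl(f^{m_j}(x_0),f^{n_{k(j)}}(x_0)\bigr)\ \ge\ a_{m_j}\ \longrightarrow\ \infty,$$
so the Karlsson--Noskov estimate forces $[y,\eta]\subseteq \partial\Omega$.

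The principal obstacle is pinning down a single boundary point $\eta$ that works simultaneously for every $y\in \mathcal{A}_f$: the Gromov products of an arbitrary orbit approximation of $y$ against some sequence converging to $\eta$ must diverge. Choosing $\eta$ along record times of $a_n$ is exactly what converts non-expansiveness into the one-sided inequality $a_{n_{k(j)}}\ge a_{n_{k(j)}-m_j}$ that makes $\delta(f^{m_j}(x_0),f^{n_{k(j)}}(x_0))$ exchangeable against $a_{n_{k(j)}}$ in the Gromov product, uniformly in~$j$.
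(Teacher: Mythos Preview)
The paper does not give its own proof of this theorem; it merely attributes the result to Karlsson and Noskov \cite{KN}. Your argument is essentially the one from that source: choose record times for $a_n=\delta(x_0,f^n(x_0))$, pass to a boundary limit $\eta$, and use non-expansiveness together with the record inequality $a_{n_{k(j)}}>a_{n_{k(j)}-m_j}$ to force the Gromov products to diverge, then invoke the Karlsson--Noskov estimate in contrapositive form. The chain of inequalities you wrote is correct, and the use of Ca\l ka's theorem to get $a_n\to\infty$ (via properness of $(\Omega,\delta)$) is the right justification.

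There is one genuine omission. You write ``$y=\lim_j f^{m_j}(x_0)$'', but $\mathcal{A}_f=\bigcup_{x\in\Omega}\omega(x;f)$, so a general $y\in\mathcal{A}_f$ is only known to be $\lim_j f^{m_j}(x)$ for \emph{some} $x\in\Omega$, not necessarily $x_0$. The phrase ``setting aside the trivial cases'' does not cover this. The fix is short: since $\delta(f^m(x),f^m(x_0))\le\delta(x,x_0)=:C$ for all $m$, one has
\[
\delta\bigl(f^{m_j}(x),f^{n_{k(j)}}(x_0)\bigr)\le C+a_{n_{k(j)}-m_j}
\quad\text{and}\quad
\delta\bigl(x_0,f^{m_j}(x)\bigr)\ge a_{m_j}-C,
\]
so the same computation gives $2\bigl(f^{m_j}(x)\mid f^{n_{k(j)}}(x_0)\bigr)_{x_0}\ge a_{m_j}-2C\to\infty$, and the conclusion $[y,\eta]\subseteq\partial\Omega$ follows as before. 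With this patch the proof is complete.
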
 
 The following counterpart to Theorem \ref{thm:5.10} was proved by Nussbaum \cite{N8}. 
\begin{theorem}  If $f\colon \Omega\to\Omega$ is a fixed point free non-expansive mapping on a Hilbert geometry $(\Omega,\delta)$ and there exists $z\in\Omega$ such that 
\[
\liminf_{k\to\infty} \delta (f^{k+1}(z),f^k(z)) =0,
\] 
then then there exists $\Lambda\subseteq\partial \Omega$ convex such that $\omega(x;f)\subseteq\Lambda$ for all $x\in\Omega$. 
\end{theorem}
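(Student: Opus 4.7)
My strategy is to establish the stronger conclusion $\mathrm{conv}(\omega(z;f))\subseteq \partial\Omega$; by the remark following the statement of Conjecture \ref{con:KN}, this reduction produces the desired convex set $\Lambda\subseteq\partial\Omega$ containing $\omega(x;f)$ for every $x\in\Omega$.

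First, I upgrade the hypothesis to $\lim a_k = 0$, where $a_k:=\delta(f^{k+1}(z),f^k(z))$. Non-expansion of $f$ gives $a_{k+1}=\delta(f(f^{k+1}(z)),f(f^k(z)))\le a_k$, so $(a_k)$ is monotone non-increasing, and $\liminf a_k=0$ forces $\lim a_k=0$. By Ca\l ka's result $\omega(z;f)\subseteq\partial\Omega$; moreover, the Hilbert metric diverges near $\partial\Omega$ while the ambient Euclidean metric stays bounded, so $a_k\to 0$ combined with $f^k(z)\to\partial\Omega$ yields $\|f^{k+1}(z)-f^k(z)\|\to 0$ in Euclidean norm. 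Consequently $\omega(z;f)$ is a compact \emph{connected} subset of $\partial\Omega$.

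Next, I build an almost $f$-invariant horofunction. Fix a basepoint $p\in\Omega$ and set $h_k(x):=\delta(x,f^k(z))-\delta(p,f^k(z))$; each $h_k$ is 1-Lipschitz with $h_k(p)=0$. The key estimate $|h_k(x)-h_{k-1}(x)|\le 2a_{k-1}\to 0$ uniformly in $x$, combined with Arzel\`a-Ascoli, produces a subsequential limit $h_{k_j}\to h$ on compact subsets, and $(h_{k_j-1})$ converges to the same $h$. From the non-expansion inequality $h_k(f(x))\le h_{k-1}(x)+a_{k-1}$, passage to the limit gives $h\circ f\le h$, so $h$ is monotone non-increasing along orbits of $f$ and the sequence $h(f^n(z))$ converges to some $L\in[-\infty,\infty)$.

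Finally, $h$ extends to a lower-semicontinuous function $\bar h:\overline\Omega\to[-\infty,+\infty)$, and the key claim is that the level set $F:=\{y\in\overline\Omega:\bar h(y)=L\}$ is a nonempty \emph{proper face} of $\overline\Omega$ containing $\omega(z;f)$. The containment $\omega(z;f)\subseteq F$ is immediate from continuity of $\bar h$ and the fact that $h(f^n(z))\to L$. To see that $F$ is a proper face, one combines the Karlsson-Noskov bound on Gromov products (used in the proof of Theorem \ref{thm:iso}) with the monotonicity $h\circ f\le h$: if $\omega(z;f)$ spilled outside every proper face, one could find $y_1,y_2\in\omega(z;f)$ with $[y_1,y_2]\not\subseteq\partial\Omega$, and the Karlsson-Noskov estimate bounds the Gromov products $(f^{n_k}(z)\mid f^{m_k}(z))_p$ for sequences $f^{n_k}(z)\to y_1$, $f^{m_k}(z)\to y_2$, while the constructed horofunction forces these same Gromov products to diverge — a contradiction. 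The main obstacle is precisely this last identification, namely showing that the minimum locus of $\bar h$ has the structure of a proper face of $\overline\Omega$ and that the orbit cannot accumulate outside it. This requires a delicate analysis of horofunction geometry in Hilbert geometries, carried out by Nussbaum in \cite{N8}; the hypothesis $\liminf a_k=0$ enters precisely to guarantee the $f$-subinvariance $h\circ f\le h$, which in turn forces $\omega(z;f)$ to concentrate on a proper face and hence $\mathrm{conv}(\omega(z;f))\subseteq F\subseteq\partial\Omega$.
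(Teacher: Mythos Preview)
The paper itself gives no proof of this theorem; it is stated and attributed to \cite{N8}. So there is nothing in the paper to compare your argument against, and your proposal must stand on its own.

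The preliminary steps are fine: the upgrade from $\liminf a_k=0$ to $\lim a_k=0$ via monotonicity is correct, the reduction to showing $\mathrm{conv}(\omega(z;f))\subseteq\partial\Omega$ is exactly the remark after Conjecture~\ref{con:KN}, and your horofunction construction together with the estimate $|h_k-h_{k-1}|\le 2a_{k-1}$ and the subinvariance $h\circ f\le h$ are all sound. The bound $\|x-y\|\le \mathrm{diam}(\Omega)(e^{\delta(x,y)}-1)$ also justifies the connectedness of $\omega(z;f)$, though you never actually use connectedness later.

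The genuine gap is in Step~6. You assert that the constructed horofunction ``forces these same Gromov products to diverge'', but you give no mechanism for this, and it is not clear that one exists along your lines. The identity $2(x\mid f^{k_j}(z))_p=\delta(x,p)-h_{k_j}(x)$ relates the Gromov product to $h_{k_j}$, but the convergence $h_{k_j}\to h$ is only uniform on compact subsets of $\Omega$, whereas you would need to evaluate at $x=f^{m_l}(z)\to y_2\in\partial\Omega$. There is no a priori control of $h_{k_j}$ along sequences escaping to the boundary, so the desired contradiction with the Karlsson--Noskov bound does not follow. Relatedly, the claim that the level set $F=\{\bar h=L\}$ of a lower-semicontinuous extension of a Hilbert horofunction is a \emph{face} of $\overline\Omega$ is unsupported; level sets of horofunctions in Hilbert geometry are in general horospheres, not faces, and their boundary traces need not be convex. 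You acknowledge that this ``requires a delicate analysis \ldots\ carried out by Nussbaum in \cite{N8}'', but that means the decisive step of your argument is precisely the content of the reference you are trying to reprove. As written, the proposal is an outline whose hard part has been outsourced back to \cite{N8}; it does not constitute an independent proof.
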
  
Other Denjoy-Wolff type theorems for  finite and infinite-dimensional 
Hilbert geometries can be found in \cite{GV,LNBook,LiN,N8}. Despite numerous efforts the Karlsson-Nussbaum conjecture remains one of the most outstanding problems in the field. 

\frenchspacing

\end{document}